\theoremstyle{plain}
\newtheorem{thm}{Theorem}[section]
\newtheorem{prop}[thm]{Proposition}
\newtheorem{cor}[thm]{Corollary}
\newtheorem{no}[thm]{Notation}
\newtheorem{clm}[thm]{Claim}
\newtheorem{lemma}[thm]{Lemma}
\theoremstyle{definition}
\newtheorem{dfn}[thm]{Definition}
\theoremstyle{remark}
\newtheorem*{rmk}{Remark}
\newcommand{\diam }[1]{{\textbf{diam}\big(#1\big)}}
\newcommand{\ax}{\mathrm{Ax}}
\newcommand{\QT}{\mathcal C_L(\mathbb S)}
\newcommand{\PC}{\mathcal P_K(\mathbb S)}
\thanks{The second author is supported by NSF of China No.11471318, No.11671057 and No.11688101;
the third author is supported by NSF of China No.11771022}
\begin{document}
\title[Large quotients of groups]{Large quotients of group actions with a contracting element}

\author{ Zunwu He, Jinsong Liu}
\address{HLM, Academy of Mathematics and Systems Science,
Chinese Academy of Sciences, Beijing, 100190, China $\&$ School of
Mathematical Sciences, University of Chinese Academy of Sciences,
Beijing, 100049, China} \email{ 519971914@qq.com,
liujsong@math.ac.cn}

\author{Wenyuan Yang}
\address{Beijing International Center for Mathematical Research $\&$ School of Mathematical
Sciences, Peking University, Beijing, 100871, China}
\email{yabziz@gmail.com}
%\date{}
\maketitle
\begin{abstract}
For any proper action of a non-elementary group $G$ on a proper geodesic metric space, we show that if $G$ contains a contracting element, then there exists a sequence of proper quotient groups whose growth rate tends to the growth rate of $G$. Similar statements are obtained for a product of proper actions with contracting elements. 

The tools involved in this paper include the extension lemma for the construction of large tree, the theory of rotating families developed by F. Dahmani, V. Guirardel and D. Osin \cite {2}, and the contruction of a quasi-tree of metric spaces introduced by M. Bestvina, K. Bromberg and K. Fujiwara \cite {1}. Several applications are given to CAT(0) groups and mapping class groups.

\end{abstract}
  \section{Introduction}
Assume that a group $G$ acts isometrically and properly on a proper geodesic metric space $(X,d)$. Let $o$ be a basepoint in $X$. Denote  $N(o,n)=\{g\in G: d(go,o)\leq n\}$ for $n\ge 0$. Define the \textit{growth rate} as follows
$$
\delta_{G}=\limsup\limits_{n\rightarrow \infty} \frac{\log\sharp N(o,n)}{n},
$$
which does not depend on the basepoint $o$. Consider a normal subgroup $N\lhd G$, and the associated  quotient space $(X/N,\bar d)$, where the metric is given by
$$
\bar d(\bar x,\bar y)=\inf\limits_{a,b\in N}d(ax,by)=\inf\limits_{a\in N}d(ax,y).
$$
The quotient group $\bar G=G/N$ acts isometrically and properly on the metric space $(X/N, \bar d)$. For the basepoint $\bar o:=No \in X/N$, one has similarly the \textit{growth rate} for the quotient action:
$$
\delta_{\bar G}=\limsup\limits_{n\rightarrow \infty} \frac{\log\sharp  N(\bar o,n)}{n},
$$
where $N(\bar o, n)=\{\bar g\in \bar G: \bar d(\bar g \bar o, \bar o)\leq n\}$.

Following R. Grigorchuk and P. de la Harpe [11], the action $G\curvearrowright X$ is called \textit{growth tight} if $\delta_{G}>\delta_{G/N}$ for any infinite normal subgroup $N\lhd G$. We remark that if $N$ is finite, it is always true that $\delta_{G}=\delta_{G/N}$. Hence, we are only interested in \textit{proper quotients $G/N$} which means the normal subgroup $N$ is infinite. By abuse of language, we often say that the group $G$ is growth tight if the action is clear in context.

A group is called \textit{non-elementary} if it is not a finite extension of the integer group or a trivial group. It is clear that an elementary group is never growth tight. The following examples of  growth tight actions of non-elementary groups have been established:
\begin{enumerate}
\item Finitely generated free groups with respect to their Cayley graph \cite {6}.
\item Hyperbolic groups \cite {7}.
\item Free product and cocompact Kleinian groups \cite {8}\cite {9}.
\item Geometrically finite Kleinian groups with parabolic gap property \cite {10}.

\item Mapping class group $Mod(S_g)$  of a closed orientable surface $S_g$ of $g\ge 2$ acting on the Teichm\"{u}ller space $T_g$ \cite {11}.

\item CAT(0) groups   with rank-1 elements \cite{11}\cite{Y1}.
\item $l^p$-product of two growth tight actions for $\infty > p\ge 1$ \cite {5}.
\end{enumerate}

Except the last item (7), the third-named author  \cite {4} generalizes the growth tightness of the above class of groups to a more general class of group actions called \textit{statistically convex-cocompact actions} (SCC actions). The definition of a SCC action is irrelavent in this paper: we only point out that it is a generalization of convex-cocompact actions in a probability sense, and includes all examples in the list (1-6).

\begin{thm}
A non-elementary group admitting a SCC action on a proper geodesic space with a contracting element is growth tight.
\end{thm}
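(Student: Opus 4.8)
\emph{Proof proposal.} The plan is to prove the contrapositive form of growth tightness directly: for every infinite normal subgroup $N\lhd G$ one wants $\delta_{G/N}<\delta_G$. First note we may assume $\bar G=G/N$ is non-elementary, for otherwise $\delta_{\bar G}=0$, while a non-elementary action with a contracting element contains a free sub-semigroup of contracting elements (ping-pong), so $\delta_G>0$ and we are done; also $\delta_G<\infty$ since the action is proper on a proper space. The core is a counting estimate. Given a good element $\bar g$ of $\bar G$ with $\bar d(\bar g\bar o,\bar o)\le n$, fix a minimal-length lift $g\in G$, so $d(go,o)\le n$, together with a geodesic $[o,go]$ carrying many transition points; insert at $m=\lfloor\epsilon n\rfloor$ of those points one of a bounded collection of ``glue'' elements drawn from $N$. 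Since the glue lies in $N$, every resulting element of $G$ still maps to $\bar g$; its $X$-length is at most $(1+\epsilon\ell')n$, where $\ell'$ bounds the size of the glue; and the number of insertion patterns is at least $\binom{n}{m}$. If the splicing map is boundedly-to-one, with multiplicity $\le C$, then
\[
\sharp N\big(o,\,(1+\epsilon\ell')n\big)\ \ge\ \tfrac{1}{C}\,\binom{n}{m}\cdot\sharp\{\,\bar g\in \bar G:\ \bar d(\bar g\bar o,\bar o)\le n,\ \bar g\ \text{good}\,\},
\]
whence, writing $H(t)=-t\log t-(1-t)\log(1-t)$ for the binary entropy,
\[
\delta_G\ \ge\ \frac{H(\epsilon)+\delta_{\bar G}}{1+\epsilon\ell'}\,.
\]
Because $H(\epsilon)/\epsilon\to\infty$ as $\epsilon\to 0^+$ while $\delta_{\bar G}\ell'$ is a fixed finite number, for all small enough $\epsilon$ this forces $H(\epsilon)>\delta_{\bar G}\ell'\epsilon$, i.e. $\delta_G>\delta_{\bar G}=\delta_{G/N}$.

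Next I would produce the glue and justify the splicing. Since $N$ is an infinite normal subgroup of a non-elementary group with a contracting element, $N$ is itself non-elementary and contains infinitely many pairwise independent contracting elements (a standard fact in this setting); fix three of them and let $F\subset N$ be the resulting finite set with $\ell'=\max_{f\in F}d(fo,o)$. The reason for taking several independent contracting elements rather than a single $a\in N$ is precisely the extension lemma: given any finite list of group elements together with transition points along their orbit geodesics, one can always interleave suitable members of $F$ so that the concatenated orbit path is a uniform quasigeodesic on which the inserted $F$-translates appear as $(\epsilon_0,M_0)$-transition points. Thus each choice of $m$ transition points along $[o,go]$ yields at least one valid insertion pattern, producing an element of $G$ whose orbit path is a controlled quasigeodesic, still mapping to $\bar g$.

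Two points then require the SCC hypothesis and the geometry of contracting elements, and the second is the main obstacle. First, ``good'' must be arranged so that good elements form a co-negligible subset of $\bar G$: I would declare $\bar g$ good when its minimal lift $g$ admits a geodesic $[o,go]$ with at least $\epsilon_0\, d(go,o)$ transition points that are, moreover, far from all translates of the $F$-axes. SCC-convex-cocompactness is exactly what makes the set of elements of $G$ failing such a condition have growth rate at most $\delta_G-\eta_0$ for some $\eta_0>0$; since a minimal lift is injective on $\bar G$, bad elements of $\bar G$ also have growth at most $\delta_G-\eta_0$, and a short bootstrap closes the argument — either $\delta_G-\eta_0<\delta_{\bar G}$ already (done), or $\delta_{\bar G}>\delta_G-\eta_0$, in which case bad $\bar g$ are negligible inside $\bar G$ and the counting estimate above runs over good $\bar g$. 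Second, and hardest, one must bound the multiplicity $C$ of the splicing map: from $w=g_1f_{j_1}g_2\cdots$ one recovers $\bar w=\bar g$ at once by applying the quotient map, but one must also recover the insertion pattern up to bounded ambiguity. Here contraction is decisive: the inserted $F$-translates are contracting subsets along the quasigeodesic $[o,wo]$, so every geodesic from $o$ to $wo$ must fellow-travel each of them over a definite length at a definite location, and the requirement that the transition points used avoid the $F$-axes prevents these fellow-traveled blocks from being confused with material already present in $[o,go]$; standard contracting-geometry estimates then bound the number of patterns compatible with a given endpoint $wo$. Combining the glue construction, the co-negligibility of bad elements, the multiplicity bound, and the entropy inequality above yields $\delta_{G/N}<\delta_G$, as desired.
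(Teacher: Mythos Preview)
This theorem is not proved in the present paper; it is quoted as a result of the third author from \cite{4} (see the sentence immediately preceding the statement), with the cocompact case already in \cite{Y1}. There is therefore no in-paper proof to compare your proposal against.

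That said, your proposal is a recognisable outline of how growth tightness is actually established in that literature. The published arguments in \cite{Y1}, \cite{4}, \cite{11} take the dual ``forbidden-pattern'' viewpoint: a minimal-length section $\sigma:\bar G\to G$ has image consisting of elements whose orbit geodesics cannot contain a long segment close to a fixed contracting axis with endpoints in the same $N$-coset, and the SCC hypothesis is exactly what forces such a constrained subset of $G$ to have growth rate strictly below $\delta_G$. Your ``splicing'' version --- start from a minimal lift and manufacture exponentially many $G$-preimages of $\bar g$ by inserting short elements of $N$ at transition points --- is the same counting read in the opposite direction, and your entropy inequality is equivalent to the one obtained there.

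Two points in your sketch deserve more than you give them. First, the assertion that an infinite normal $N$ contains three pairwise independent contracting elements is correct but not a throwaway: one builds them as commutators $[a,h^m]\in N$ for $a\in N\setminus E(h)$ and $m\gg 0$, after ruling out $N\subset E(h)$ (impossible since $E(h)$ is virtually cyclic while $G$ is non-elementary). Second, your multiplicity bound $C$ is where almost all the contracting geometry is spent, and ``standard contracting-geometry estimates'' undersells it; in the actual proofs this is handled by admissible-path results of the type of Proposition~\ref{admis}, which force any geodesic $[o,wo]$ to pass within bounded distance of each inserted $F$-block at a location determined up to bounded error, so that the insertion pattern can be read off $w$. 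Your use of SCC is also slightly misstated: SCC does not directly say that elements with few transition points are rare, but rather that elements whose geodesic leaves a fixed neighbourhood of the orbit are rare; one then shows separately (via the extension lemma and a growth-gap argument for barrier-free elements) that generic orbit-close geodesics carry linearly many usable transition points. Modulo fleshing these steps out, the strategy is sound.
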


In view of growth tightness,  a natural question to ask is whether there is a gap between $\delta_{G}$ and $\sup \{\delta_{G/N}\}$, where the supremum is taken over all proper quotients.

For a torsion free hyperbolic group,  R. Coulon \cite {Coulon} showed that there exists no gap for the natural action on its Cayley graph. In \cite{3}, the third-named author establishes the same result for relatively hyperbolic groups with actions on Cayley graphs and  cusp-uniform actions with parabolic gap property on hyperbolic spaces.

The goal of the present paper is to give a far reaching generalization of the above mentioned results. In fact, our main result exhibits a sequence of proper quotient groups with no gap for any proper action of a group with a contracting element.

\begin{thm}\label{mainthm}
Suppose that a non-elementary group $G$ acts properly   on a proper geodesic metric space $X$ with a contracting element. Then there exists a sequence of proper quotient groups $\bar{G}_n$ such that, as $n \rightarrow\infty$,
\begin{equation}
\delta_{\bar{G}_n}\rightarrow\delta_{G}.
\end{equation}
\end{thm}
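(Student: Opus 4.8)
The plan is to show that for every $\epsilon>0$ there is an \emph{infinite} normal subgroup $N\lhd G$ with $\delta_{G/N}\ge\delta_G-\epsilon$. This suffices: since $\bar d\le d$, every $\bar g\in N(\bar o,n)$ lifts to an element of $N(o,n)$ lying in a distinct coset of $N$, so $\sharp N(\bar o,n)\le\sharp N(o,n)$ and hence $\delta_{\bar G}\le\delta_G$ for \emph{every} quotient; applying the assertion to $\epsilon=1/k$ then produces a sequence of proper quotients $\bar G_k=G/N_k$ with $\delta_{\bar G_k}\to\delta_G$. So fix $\epsilon>0$.

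\textbf{A large tree in $G$.} Fix a contracting element $z$ (a power of the given one, or of a conjugate of it, chosen so the later constructions apply). By definition of the growth rate, choose $\ell$ large with $\sharp N(o,\ell)\ge e^{(\delta_G-\epsilon)\ell}$, and let $A\subseteq N(o,\ell)$ consist of those elements whose geodesic to $o$ is transverse to $z$, i.e.\ has uniformly bounded projection to every $G$-translate of the axis of $z$; a counting argument based on the structure theory of contracting elements shows that discarding the non-transverse elements removes only a set of strictly smaller growth rate, so $\sharp A\ge e^{(\delta_G-2\epsilon)\ell}$ once $\ell$ is large. The extension lemma then supplies a bound $c_0$, depending only on $z$, and allows one to interleave letters of $A$ with ``separators'' $z^{t}$, $|t|\le c_0$, so that every resulting \emph{admissible word} $w=a_1 z^{t_1}a_2 z^{t_2}\cdots a_j z^{t_j}$ (with $a_i\in A$) is a uniform quasi-geodesic, distinct admissible words represent distinct elements of $G$, and $w$ has displacement at most $j(\ell+c_0)$. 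Letting $T$ be the set of all admissible words, one gets $\sharp\bigl(T\cap N(o,j(\ell+c_0))\bigr)\ge(\sharp A)^{j}$, so the $\limsup$ defining the growth of $T$ is at least $\frac{\log\sharp A}{\ell+c_0}\ge\frac{(\delta_G-2\epsilon)\ell}{\ell+c_0}\ge\delta_G-3\epsilon$ provided $\ell\gg c_0$.

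\textbf{A small-cancellation quotient.} Since $z$ is contracting, I would form, following Bestvina--Bromberg--Fujiwara, the quasi-tree of metric spaces associated to the collection of $G$-translates of the axis of $z$, obtaining a hyperbolic space $\mathcal C$ on which $G$ acts isometrically and on which $z$ acts loxodromically with enough separation that the conjugates of $\langle z^{n}\rangle$ act on a suitable cone-off of $\mathcal C$ as a very rotating family. Dahmani--Guirardel--Osin's theory of rotating families then gives, for all $n$ large, that $N_n:=\langle\langle z^{n}\rangle\rangle$ is infinite --- so $\bar G_n=G/N_n$ is a genuine proper quotient --- together with a Greendlinger-type conclusion: any nontrivial element of $N_n$ must fellow-travel some $G$-translate of the axis of $z$ for a distance at least $\rho(n)$, where $\rho(n)\to\infty$ with $n$.

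\textbf{Survival of the tree, and conclusion.} The crux of the argument --- and the step I expect to be the main obstacle --- is to show that, for $n$ large (depending on $\epsilon$ only through $c_0$ and the transversality constant, both finite), the quotient map remains injective on $T$. If two admissible words $w,w'$ had the same image in $\bar G_n$, then after cancelling any common prefix we may assume they share no first $A$-letter, so $w(w')^{-1}$ is a nontrivial element of $N_n$ represented by an admissible word built from $A^{\pm1}$ and powers of $z$ of bounded exponent (at most $2c_0$ at the splice point, at most $c_0$ elsewhere); since each letter of $A^{\pm1}$ is transverse to every $G$-translate of the axis of $z$, this path can fellow-travel any such translate only for a bounded length, contradicting the Greendlinger conclusion once $\rho(n)$ exceeds that bound. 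Hence $T$ injects into $\bar G_n$, and as displacements can only decrease under the quotient we get $\sharp N(\bar o,m)\ge\sharp\bigl(T\cap N(o,m)\bigr)$ for all $m$, whence $\delta_{\bar G_n}\ge\delta_G-3\epsilon$; combined with $\delta_{\bar G_n}\le\delta_G$ and applied along $\epsilon=1/k$ with a correspondingly increasing sequence $n_k$, this produces the required $\bar G_{n_k}$ with $\delta_{\bar G_{n_k}}\to\delta_G$. Besides this injectivity step, the points needing care are the quantitative bookkeeping in the extension lemma (keeping $c_0$ and the transversality constant independent of $\epsilon$), the verification that the conjugates of $\langle z^n\rangle$ really form a very rotating family in the sense required by Dahmani--Guirardel--Osin, and matching their rotation threshold against the contraction constant of $z$.
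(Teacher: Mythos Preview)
Your three-step outline---large tree via the extension lemma, BBF quasi-tree plus DGO rotating families, then injectivity of the tree in the quotient---matches the paper's architecture. Two points in the execution, however, diverge from the paper and leave genuine gaps.

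\textbf{One contracting element versus two.} You use the \emph{same} element $z$ both as separator in the tree and as the element defining the quotient $\langle\langle z^n\rangle\rangle$. The paper deliberately uses two \emph{independent} contracting elements: $f$ to build the tree (Lemma~\ref{freesemigroup}) and $h$ to form the quotient (beginning of \S\ref{Section5}). With two elements the saturation of every tree branch lies in $\{g\ax(f):g\in G\}$ while the Greendlinger deep point lies in $\{g\ax(h):g\in G\}$, and bounded intersection of the combined family yields the needed transversality (Lemma~\ref{freesemigroup}(3)) with no further work. Your single-element route forces you to restrict $A$ to elements whose geodesic to $o$ has bounded projection to \emph{every} translate of $\ax(z)$ and to assert that this costs nothing in growth; but the opposite phenomenon is what is normally proved for contracting elements (generic geodesics \emph{do} acquire long $z$-barriers), so this counting claim needs a real argument that you have not supplied. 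Even granting it, the separators $z^t$ themselves sit on translates of $\ax(z)$, so the Greendlinger axis $S_0$ may coincide with a saturation axis of the tree branch---a case your argument does not address.

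\textbf{The injectivity step.} You argue by concatenating $w(w')^{-1}$ and claiming the resulting path cannot fellow-travel any $g\ax(z)$ for long. Two problems arise. First, at the splice the exponent $t_j-t'_k$ may be small or zero, so (LL) fails and the concatenation is no longer an admissible path, hence not a quasi-geodesic, hence does not control the geodesic $[o,w(w')^{-1}o]$ to which Greendlinger actually applies. Second, the rotating family and the Greendlinger conclusion live on the cone-off of $\QT$, so the deep point is on a geodesic in $\QT$, not in $X$; these are genuinely different spaces. The paper sidesteps the first issue by never concatenating: it works with the triangle $[o,g_1o]_{\mathcal C}$, $[o,g_2o]_{\mathcal C}$, $g_2[o,go]_{\mathcal C}$ in $\QT$ and uses thinness to push the deep point onto one of the first two sides (Claim~\ref{deepinalpha}). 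It then handles the second issue by locating $S_0$ in the interval set $\mathbb S_{K'}(U,V)$ and lifting the corresponding standard path in $\QT$ to an admissible path in $X$ (Lemma~\ref{admisIntvLem} and Claim~\ref{deepinalpha2}); this transfer is exactly the ``main obstacle'' you anticipated, and it occupies the bulk of the proof of Lemma~\ref{lastlem}.
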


\begin{rmk}
When the action $G\curvearrowright X$ is SCC, the action is growth tight by Theorem 1.1, and thus $\delta_{\bar{G}}<\delta_{G}$ for every proper quotient $\bar G$. Hence, this is the setting where Theorem 1.2 is interesting.

On the other hand, if the assumption of SCC actions is dropped, there exists indeed a sequence of proper quotient groups such that $\delta_{\bar{G}_n}=\delta_{G}$. For example, if the group $G$ contains a \textit{large} parabolic subgroup $P$ with the same growth rate of $G$, then we can construct a sequence of proper quotient groups with the same growth rate with that of $G.$ In this sense, our theorem is sharp. For details, we refer the reader to the proof of \cite[Proposition 1.5]{3} 
\end{rmk}

We now consider the applications of our result to mapping class groups and CAT(0) groups with rank-1 elements.

In \cite {13}, Y. Minsky proved that any pseudo-Anosov element in the mapping class group is contracting with respect to the Teichm\"{u}ller metric. Thus, our theorem applies and gives the following corollary:
\begin{cor}
Let $S=S_g$ be a closed orientable surface of genus $g\ge 2$ and $G$ be the mapping class group of $S$.
Denote by $T_{g}$ its Teichm\"{u}ller space endowed with Teichm\"{u}ller metric. Then there
exists a sequence of proper quotient groups $\bar{G}_m$ of $G$ such that
$\delta_{\bar{G}_m}\rightarrow\delta_{G}$ as $m \rightarrow\infty$.
\end{cor}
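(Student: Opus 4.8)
The plan is to deduce the corollary directly from Theorem \ref{mainthm}, applied to the action of $G=\mathrm{Mod}(S_g)$ on its Teichm\"uller space $(T_g,d_{\mathrm{Teich}})$. So the only thing to do is to check that the four hypotheses of Theorem \ref{mainthm} are in force: (i) $(T_g,d_{\mathrm{Teich}})$ is a proper geodesic metric space; (ii) $G$ acts on it properly by isometries; (iii) $G$ is non-elementary; and (iv) $G$ contains a contracting element for this action. Granting (i)--(iv), Theorem \ref{mainthm} produces a sequence of proper quotients $\bar G_m=G/N_m$ with $\delta_{\bar G_m}\to\delta_G$, which is exactly the assertion after renaming the index $n$ to $m$.

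For (i) and (ii) I would invoke the classical theory of Teichm\"uller geometry. By Teichm\"uller's existence and uniqueness theorem, any two points of $T_g$ are joined by a unique geodesic for $d_{\mathrm{Teich}}$, so $(T_g,d_{\mathrm{Teich}})$ is (uniquely) geodesic; and it is a proper metric space, i.e.\ its closed balls are compact, which is a standard fact about the Teichm\"uller metric. The mapping class group acts on $T_g$ by $d_{\mathrm{Teich}}$-isometries, and this action is properly discontinuous in the required sense: for every compact $K\subseteq T_g$ the set $\{g\in G:gK\cap K\neq\emptyset\}$ is finite. Combined with compactness of closed balls, this says that $G\curvearrowright(T_g,d_{\mathrm{Teich}})$ is a proper action, so (i) and (ii) hold.

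For (iii), since $g\ge 2$ the group $\mathrm{Mod}(S_g)$ contains a nonabelian free subgroup — for instance, one generated by two independent pseudo-Anosov classes via a ping-pong argument — hence it is neither trivial nor virtually cyclic, i.e.\ it is non-elementary. For (iv), Thurston's construction guarantees that a pseudo-Anosov element $f\in G$ exists, and by Minsky's theorem \cite{13} the axis of $f$, which is a Teichm\"uller geodesic, is a contracting quasigeodesic, so $f$ is a contracting element for $G\curvearrowright(T_g,d_{\mathrm{Teich}})$. With (i)--(iv) verified, Theorem \ref{mainthm} applies and yields the desired sequence. There is no genuine obstacle internal to the argument: the substantive inputs are external, namely Minsky's contraction theorem \cite{13} and the properness of the Teichm\"uller metric, and the only point worth double-checking is that ``proper action'' in the sense of Theorem \ref{mainthm} is precisely the classical proper discontinuity of $\mathrm{Mod}(S_g)\curvearrowright T_g$ together with compactness of closed Teichm\"uller balls.
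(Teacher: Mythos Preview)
Your proposal is correct and follows exactly the same approach as the paper: the corollary is stated immediately after noting that Minsky \cite{13} proved pseudo-Anosov elements are contracting for the Teichm\"uller metric, whence Theorem \ref{mainthm} applies. You have simply been more explicit than the paper in verifying the remaining hypotheses (properness of the metric and of the action, and non-elementariness of $\mathrm{Mod}(S_g)$), which the paper takes as well-known.
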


\begin{rmk}
The growth rate of the mapping class group $G \curvearrowright T_{g}$ with the Teichm\"{u}ller metric  equals $6g-6$ \cite [Theorem 1.3]{16}. We remark that the growth tightness of the action $G \curvearrowright T_{g}$ was proved in \cite{11}.
\end{rmk}

Notice that a proper cocompact action $G\curvearrowright X$  (i.e. there exists a compact subset $K$ in $X$ such that $G\cdot K=X$) is called \textit{geometric}. A \textit{CAT$(0)$ group} $G$ means a geometric action on a proper CAT$(0)$ space $X$. 

A hyperbolic isometry of a proper CAT$(0)$ space is \textit{rank-one} if it has an axis that doesn't bound a half Euclidean plane. A well known fact is that a rank-one element is equivalent to a contracting element, provided that the space is proper CAT$(0)$ space \cite [Theorem 5.4] {15}. Thus, the following corollary of our result is immediate.

\begin{cor}\label{CAT0NoGap}
Assume that a non-elementary group $G$ acts geometrically on a proper CAT$(0)$ space $X$ with a rank-1 element. Then there exists a
sequence of proper quotient groups $\bar{G}_m$ such that
$$\delta_{\bar{G}_m}\rightarrow\delta_{G}, \:when \: m
\rightarrow\infty.$$
\end{cor}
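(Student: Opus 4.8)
The plan is to deduce this statement directly from Theorem \ref{mainthm}, since the hypotheses of a geometric action on a CAT$(0)$ space with a rank-one element form a special case of those in the main theorem. First I would observe that a geometric action — proper and cocompact — on a proper CAT$(0)$ space $X$ is in particular a proper isometric action of the non-elementary group $G$ on a proper geodesic metric space: every CAT$(0)$ space is geodesic, and the cocompactness hypothesis is not actually needed to invoke Theorem \ref{mainthm}, which asks only for a proper action on a proper geodesic space. Thus the only point that remains is to produce a contracting element for the action $G \curvearrowright X$.

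For that I would invoke the cited equivalence \cite[Theorem 5.4]{15}: in a proper CAT$(0)$ space, a rank-one isometry — a hyperbolic isometry admitting an axis that does not bound a half Euclidean plane — is precisely a contracting element in the sense used throughout this paper. Consequently the hypothesized rank-one element of $G$ is a contracting element for $G \curvearrowright X$, and all the hypotheses of Theorem \ref{mainthm} are verified. Applying it yields the desired sequence of proper quotient groups $\bar G_m$ with $\delta_{\bar G_m} \to \delta_G$ as $m \to \infty$. There is no genuine obstacle at this stage; the entire mathematical content sits inside Theorem \ref{mainthm} together with the CAT$(0)$-geometric translation of \emph{rank-one} into \emph{contracting}, both of which are available to us. The only care required is the bookkeeping check that "proper" — rather than "geometric" — is the exact hypothesis demanded by Theorem \ref{mainthm}, so that cocompactness may be freely discarded.
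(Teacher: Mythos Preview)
Your proposal is correct and matches the paper's own approach: the paper states that the corollary is immediate from Theorem \ref{mainthm} once one invokes \cite[Theorem 5.4]{15} to identify rank-one elements with contracting elements in a proper CAT$(0)$ space. Your additional observation that cocompactness is not needed for Theorem \ref{mainthm} is accurate and harmless.
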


We now turn to our second main result of this paper in  a product of geometric actions. This is motivated by the work  of C. Cashan and J. Tao \cite{8} who proved the growth tightness for products of geometric actions with contracting elements. In particular, they produced the first example of groups (e.g. $\mathbb F_2\times \mathbb F_2$) so that it is growth tight for one generating set, but not necessarily for the others. Before stating our general theorem, let's look at a particular class of CAT(0) groups.

When $X$ is a CAT$(0)$ cube complex, we often say $G$ is a cubical group. In this case, we are able to remove the assumption of existence of rank-1 elements in \ref{CAT0NoGap}. This makes use of a remarkable result of M. Sageev and P. Caprace \cite{12} which resolves the rank rigidity conjecture for cubical groups.

In  \cite [Theorem A]{12}, it is  showed that if a CAT$(0)$ cube complex $X$ can not be factored as a product of CAT$(0)$ cube complexes, then any geometric group action on $X$  contains a rank-one element. Using this result, one can derive that any geometric action of $G$ on a CAT(0) cube complex virtually splits as a product action on a product of irreducible cube complexes such that each action on each irreducible factor contains a rank-1 element. By  \cite[Theorem 1.2]{Y1} we see that any geometric action with rank 1 elements on a CAT(0) cube complex is growth tight with respect to the CAT$(0)$ metric.

\begin{cor}\label{CubeNoGap}
Assume a group $G$ acts  by a geometric action on a proper geodesically complete CAT$(0)$ cube complex
$X$. Then there exists a
sequence of proper quotient groups $\bar{G}_m$ such that
$$\delta_{\bar{G}_m}\rightarrow\delta_{G}, \:when \: m
\rightarrow\infty.$$
\end{cor}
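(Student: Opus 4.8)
The plan is to use the rank rigidity of \cite[Theorem~A]{12} to reduce to a product of geometric actions with contracting elements, to apply the product version of Theorem~\ref{mainthm} to such a product, and to transfer the resulting proper quotients back to $G$.

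We may assume $G$ is non-elementary. As recalled before the statement, \cite[Theorem~A]{12} together with the structure theory of geometric actions on cube complexes shows that, after passing to a finite-index subgroup $G^{*}\le G$, the action splits as a direct product $G^{*}=G_{1}\times\cdots\times G_{k}$ acting on $X=X_{1}\times\cdots\times X_{k}$, where each $X_{i}$ is an irreducible, proper, geodesically complete CAT$(0)$ cube complex, $X$ carries the CAT$(0)$ (i.e. $\ell^{2}$-product) metric, and each $G_{i}\curvearrowright X_{i}$ is geometric with a rank-one — hence contracting (\cite[Theorem~5.4]{15}) — element; point factors are discarded. Since $G^{*}$ has finite index in $G$ and both act geometrically on $X$, $\delta_{G^{*}}=\delta_{G}$.

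Next I would apply the product form of Theorem~\ref{mainthm} — our second main result, concerning products of proper actions each carrying a contracting element, whose single-factor input is here Corollary~\ref{CAT0NoGap} — to $G^{*}\curvearrowright X$. It produces a sequence of proper quotients $G^{*}\twoheadrightarrow G^{*}/N_{m}$ with $\delta_{G^{*}/N_{m}}\to\delta_{G^{*}}=\delta_{G}$. Moreover the $N_{m}$ arise as normal closures of high powers of contracting elements drawn from the factors, so one has the freedom to run the construction on a $G$-invariant family of such elements: fixing a contracting element $c_{i}\in G^{*}$ for $X_{i}$ and a transversal $T$ of $G^{*}$ in $G$, one takes $N_{m}$ to be the $G^{*}$-normal closure of $\{(gc_{i}g^{-1})^{m}: 1\le i\le k, g\in T\}$; because $G^{*}$ contains the factor-preserving subgroup and left multiplication by an element of $G$ permutes the cosets of the latter, this makes $N_{m}$ normal in $G$, while still killing enough in each factor that $\delta_{G^{*}/N_{m}}\to\delta_{G^{*}}$ persists.

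With $N_{m}\lhd G$, the group $\bar G_{m}:=G/N_{m}$ is a proper quotient of $G$, and $G^{*}/N_{m}$ has finite index in $\bar G_{m}$, so $\delta_{\bar G_{m}}=\delta_{G^{*}/N_{m}}\to\delta_{G^{*}}=\delta_{G}$, proving the corollary. The step I expect to be most delicate is the transfer from $G^{*}$ to $G$: since $G^{*}$ need not be normal in $G$ and $G$ may permute the irreducible factors, one must arrange the relators $N_{m}$ $G$-equivariantly — one contracting element per orbit of factors, spread over all factors — so that $N_{m}\lhd G$, and verify that this enlarged relator set still yields $\delta_{G^{*}/N_{m}}\to\delta_{G^{*}}$ (if the product result must first be invoked for an honest direct product inside $G^{*}$, one then descends to $G$ using exactly this $G$-invariance). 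Correctly extracting from \cite[Theorem~A]{12} the product reduction with a rank-one element in each irreducible factor is the remaining ingredient.
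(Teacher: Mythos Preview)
Your outline --- Caprace--Sageev reduction to a finite-index product subgroup, then Theorem~\ref{productThm} on that product, then transfer back to $G$ --- is exactly the paper's. The difference is entirely in the transfer step, and the paper handles it more simply than the $G$-equivariant relator construction you propose.

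The paper first replaces the finite-index subgroup by a \emph{normal} one $\dot G\lhd G$ (this costs nothing and immediately removes your worry that ``$G^{*}$ need not be normal''). It then applies Theorem~\ref{productThm} as a black box to obtain $N_m\lhd\dot G$ with $\delta_{\dot G/N_m}\to\delta_{\dot G}$, and simply takes the normal closure $\hat N_m$ of $N_m$ in $G$. The point is that the paper does not try to control $\hat N_m$ or to arrange $N_m$ to be $G$-invariant in advance. Instead it uses that the \emph{proof} of Theorem~\ref{productThm} furnishes not just a growth-rate bound but a concrete witness: a free semigroup $\Gamma\subset\dot G$ with $\delta_\Gamma\to\delta_{\dot G}$ whose orbit $\Gamma o$ injects into $X/N_m$ (this is Lemma~\ref{lastlem} run factorwise). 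The paper then argues that $\Gamma$ still injects into $G/\hat N_m$, so $\delta_{G/\hat N_m}\ge\delta_\Gamma\to\delta_{\dot G}=\delta_G$.

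This bypasses precisely the delicate step you flag. Your plan requires enlarging the relator set to include all $G$-conjugates of the chosen contracting elements and then re-verifying that the rotating-family / small-cancellation input (Lemma~\ref{deepPtLem}, Lemma~\ref{lastlem}) still gives the needed injectivity when one kills several independent contracting elements per factor at once; that extension is plausible but is not provided by the paper's machinery as stated. The paper's route uses Theorem~\ref{productThm} exactly as proved, adds only a soft normal-closure step, and tracks the semigroup $\Gamma$ through it --- so you get the conclusion without touching the analytic core again.
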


An important class of cubical groups is provided by the class of Right Angled-Artin group (RAAG). Given a finite simplicial graph $\Gamma$, a right-angled Artin group $G$ associated with the graph is given by the following  group presentation:
$$G=\langle V(\Gamma)|v_1v_2=v_2v_1 \text{ iff }  (v_1,v_2)\in E(\Gamma)\rangle.$$

Assume the graph $\Gamma$ has $k$ vertices. Let $T$ be the torus of dimension $k$ with edges labelled by the vertices. The subcomplex $Y$ of $T$ consists of all the faces whose edge labels span a complete subgraph of $\Gamma$. The resulting space $Y$ is called the \textit{Salvetti complex} of $G$. It is well known that its universal covering space is a CAT$(0)$ cube complex. Thus, the following   consequence of the above corollary is direct.

%In graph theory, a simplicial graph is a \textit{join} if it consists of two disjoint subgraphs and all edges connecting two vertices in distinct subgraphs. J. Behrstock and R. Charney \cite {14} show that for any finite simplicial connected graph which is not a join, the right-angled Artin group defined by this graph has a rank-one element. 

\begin{cor}\label{RAAGNoGap}
Let $G$ be the right-angled Artin group acting on the universal covering of the Salvetti complex associated to the graph $\Gamma$. Then there exists a sequence of proper quotient groups $\bar{G}_m$ such that, as $m \rightarrow \infty$,
$$
\delta_{\bar{G}_m}\rightarrow\delta_{G}.
$$
\end{cor}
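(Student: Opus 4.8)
The plan is to derive this directly from Corollary~\ref{CubeNoGap}. Write $Y$ for the Salvetti complex of $G$ and $\widetilde{Y}$ for its universal cover, on which $G=\pi_1(Y)$ acts by deck transformations; it then suffices to verify that $\widetilde{Y}$ is a proper, geodesically complete CAT$(0)$ cube complex and that $G\curvearrowright\widetilde{Y}$ is geometric. Everything except geodesic completeness is classical. Since $\Gamma$ is finite, $Y$ is a finite cube complex whose unique vertex has link the flag simplicial complex on $\{v^{+},v^{-}:v\in V(\Gamma)\}$ in which $v^{\varepsilon}$ and $w^{\eta}$ span an edge precisely when $v\ne w$ and $(v,w)\in E(\Gamma)$; as this link is flag, $Y$ is non-positively curved, so $\widetilde{Y}$ is a CAT$(0)$ cube complex. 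The deck action of $G$ on $\widetilde{Y}$ is free and properly discontinuous, and it is cocompact because $Y$ is finite; hence it is geometric. Finiteness of $\Gamma$ also makes $\widetilde{Y}$ locally finite, and as a complete geodesic space it is then proper by the Hopf--Rinow theorem.

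The one point requiring an argument is geodesic completeness of $\widetilde{Y}$, which I would obtain by showing that $\widetilde{Y}$ has no \emph{free face}, i.e.\ no cube that is a proper face of exactly one other cube; a CAT$(0)$ cube complex with no free face is geodesically complete. Identify the $0$-skeleton of $\widetilde{Y}$ with $G$. Each $n$-cube can be encoded by its ``bottom corner'' $g\in G$ together with the clique $S=\{v_{1},\dots,v_{n}\}$ of $\Gamma$ formed by the edge directions of the cube at $g$. The cubes having this $n$-cube as a codimension-one face are obtained by adjoining one further vertex $w\notin S$ of $\Gamma$ adjacent to every vertex of $S$; for each such $w$ there are exactly two of them, namely the cube with bottom corner $g$ and directions $S\cup\{w\}$ and the cube with bottom corner $gw^{-1}$ and directions $S\cup\{w\}$, and these are distinct since $gw$ is a vertex of the former but not of the latter ($w^{2}$ not being a product of pairwise distinct generators of $G$). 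Therefore every cube of $\widetilde{Y}$ is a codimension-one face of an even number of cubes, hence never a proper face of exactly one cube, so $\widetilde{Y}$ has no free face and is geodesically complete. This step genuinely uses the passage to the universal cover: the single vertex of $Y$ itself can produce honest free faces, which get unwrapped in $\widetilde{Y}$.

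With all hypotheses checked, Corollary~\ref{CubeNoGap} applies to the action $G\curvearrowright\widetilde{Y}$ and yields a sequence of proper quotient groups $\bar{G}_{m}$ with $\delta_{\bar{G}_{m}}\to\delta_{G}$, which is the assertion. I expect no serious obstacle: the statement is essentially a repackaging of standard facts about Salvetti complexes together with Corollary~\ref{CubeNoGap}, and the only mildly nontrivial ingredient is the parity count above that secures geodesic completeness (the hypothesis one might otherwise worry is missing).
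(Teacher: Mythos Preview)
Your argument is correct and follows the same route as the paper: the text immediately preceding the corollary records that the universal cover of the Salvetti complex is a CAT$(0)$ cube complex and then declares the result a direct consequence of Corollary~\ref{CubeNoGap}. Your explicit verification of geodesic completeness via the parity count on codimension-one cofaces is a careful addition that the paper leaves implicit.
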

\begin{rmk}
In \cite{5} C. Cashan and J. Tao proved that the action of $\mathbb F_2\times \mathbb F_2$ on its universal covering of the Salvetti complex is growth tight with respect to the CAT$(0)$ metric. It is worth remarking that the action of the same group is not growth tight, when the combinatorial metric is used (i.e. the $L_1$-metric induced by the 1-skeleton of the universal cubical covering).
\end{rmk}

The proof of Corollary \ref{CubeNoGap} relies on the following result on product of groups actions, which is of independent interest.

Let $G=\prod_{1\le i\le n} G_i$ be a product of groups. We call a quotient $\bar G$ of $G$ \textit{proper} if the kernel of $G\to \bar G$ projects to an infinite normal subgroup  in each factor $G_i$.   

\begin{thm}\label{productThm}
Let $G=\prod_{1\le i\le n} G_i$ be a product of groups, and
$X=\prod_{1\le i\le n} (X_i,d_i)$   a product
of geodesic metric spaces such that for  each $1\leq
i\leq n$, the action $G_i\curvearrowright X_i$ is a geometric action with a contracting element. We equip
$X$ with the $L^p$-metric, where $1\leq p\leq \infty$. Then
there exists a sequence of proper quotient groups $\bar{G}_m$ such that
$$\delta_{\bar{G}_m}\rightarrow\delta_{G}, \: when\:\: m
\rightarrow\infty.$$
\end{thm}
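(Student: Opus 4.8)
The plan is to deduce Theorem~\ref{productThm} from the single-factor statement, Theorem~\ref{mainthm}, applied separately to each action $G_i\curvearrowright X_i$, and then to glue the resulting quotients into a quotient of the product by means of a growth formula for $L^p$-metrics. Throughout, let $q\in[1,\infty]$ denote the exponent conjugate to $p$, i.e.\ $1/p+1/q=1$, with the convention $q=\infty$ for $p=1$ and $q=1$ for $p=\infty$.

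The first step is to establish the product growth formula
\[
\delta_G=\big\|(\delta_{G_i})_{1\le i\le n}\big\|_q
\]
for any product $X=\prod_i X_i$ of geometric actions carrying the $L^p$-metric. For the upper bound I would cover the orbit ball $N(o,n)$ by the ``slabs'' $\prod_i N_i(o_i,n_i)$ over the polynomially many integer tuples with $\|(n_i)_i\|_p\le n+1$; since $\sharp N_i(o_i,n_i)\le e^{(\delta_{G_i}+\varepsilon)n_i}$ once $n_i$ is large, H\"older's inequality bounds each slab by $e^{(\|(\delta_{G_i})_i\|_q+\varepsilon')n}$, and letting $\varepsilon\to 0$ yields ``$\le$''. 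For the lower bound I would use that the orbit-counting function of a geometric action is submultiplicative up to bounded error (via the \v{S}varc--Milnor lemma), so that $\delta_{G_i}$ is a genuine limit rather than a mere $\limsup$; then for the split $n_i:=n\,\delta_{G_i}^{\,q-1}\big/\big\|(\delta_{G_j})_j\big\|_q^{\,q-1}$ (and $n_i:=n$ for all $i$ when $p=\infty$) the single slab $\prod_i N_i(o_i,n_i)\subseteq N(o,n)$ already contains at least $e^{(\|(\delta_{G_i})_i\|_q-\varepsilon)n}$ points. The case $\delta_G=0$ is trivial and may be set aside.

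The second step is the observation that a proper quotient of a geometric action is again geometric. Given $N\lhd G_i$, the excerpt already grants that $\bar G_i:=G_i/N$ acts properly and isometrically on $(X_i/N,\bar d)$; cocompactness descends, since $\bar G_i\cdot\pi(K)=X_i/N$ for any compact $K$ with $G_i\cdot K=X_i$, and $X_i/N$ is proper and geodesic because, using properness of the $N$-action on the proper space $X_i$, closed balls in $X_i/N$ are the continuous images of closed balls in $X_i$ and geodesics of $X_i$ project to geodesics. Consequently $\delta_{\bar G_i}$ is again a genuine limit, so the lower-bound argument of the previous paragraph applies verbatim to a product of such quotient actions; together with the elementary inequality $\delta_{\bar G}\le\delta_G$ valid for every quotient, this is all that is needed.

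Now assemble the statement. Fix $\varepsilon_m\downarrow 0$. By Theorem~\ref{mainthm} applied to each $G_i\curvearrowright X_i$, for every $m$ and every $i$ there is a proper quotient $G_i\twoheadrightarrow G_i/N_{i,m}$ (so $N_{i,m}$ is infinite) with $\delta_{G_i/N_{i,m}}>\delta_{G_i}-\varepsilon_m$; note $\delta_{G_i/N_{i,m}}\to\delta_{G_i}$. Put $\bar G_m:=\prod_i G_i/N_{i,m}=G/\prod_i N_{i,m}$; its kernel $\prod_i N_{i,m}$ projects onto the infinite normal subgroup $N_{i,m}\lhd G_i$ in each coordinate, so $\bar G_m$ is a proper quotient of $G$ in the sense defined above. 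Since the infimum defining the quotient metric on $X/\prod_i N_{i,m}$ splits over the independent factors and the $L^p$-norm is coordinatewise monotone, this metric is exactly the $L^p$-metric on $\prod_i (X_i/N_{i,m})$, so the two previous steps give
\[
\delta_{\bar G_m}\;\le\;\delta_G\;=\;\big\|(\delta_{G_i})_i\big\|_q\;=\;\lim_{m\to\infty}\big\|(\delta_{G_i/N_{i,m}})_i\big\|_q\;\le\;\liminf_{m\to\infty}\delta_{\bar G_m},
\]
using continuity of $\|\cdot\|_q$ on $\mathbb R_{\ge 0}^n$; hence $\delta_{\bar G_m}\to\delta_G$. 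I expect the main work to be the product growth formula itself --- in particular making the H\"older optimization of the split $(n_i)$ rigorous and checking it in the degenerate regimes $p\in\{1,\infty\}$ and $\delta_{G_i}=0$ --- together with the verification that the quotient actions remain geometric, which is what promotes their growth rates from $\limsup$s to honest limits and thus lets the lower bound pass to the product.
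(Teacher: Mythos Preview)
Your proposal is correct, and the overall architecture---prove the $L^q$ product growth formula, manufacture large quotients in each factor, and assemble them into a product quotient---is the same as the paper's. The difference lies in \emph{where} you apply the product formula. The paper never applies it to the quotient action: instead it takes the free semigroups $A_i\subset G_i$ from Lemma~\ref{freesemigroup}, notes that each has purely exponential growth, applies the product formula (quoted as Proposition~6.1 from Cashen--Tao) to $A=\prod_i A_i$ to get $\delta_A=\|(\delta_{A_i})\|_q$, and then uses the injectivity of $A_i o\hookrightarrow X_i/\llangle h_i^{m_i}\rrangle$ (i.e.\ Lemma~\ref{lastlem}) directly to push $A$ into the product quotient. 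In your route, by contrast, you invoke Theorem~\ref{mainthm} as a black box and then apply the product formula to $\prod_i(G_i/N_{i,m})\curvearrowright\prod_i(X_i/N_{i,m})$; this obliges you to verify that each quotient action is again geometric so that its growth rate is a genuine limit and the H\"older lower bound goes through. That verification is routine (you sketch it correctly), and the splitting of the quotient $L^p$-metric over the factors is valid by coordinatewise monotonicity of $\|\cdot\|_p$, so your argument closes. The paper's approach avoids this extra step by working with explicit subsets of known purely exponential growth rather than with the full quotient groups; yours is more modular and would apply equally well to any source of large quotients in the factors, not just the specific $\llangle h_i^{m_i}\rrangle$ produced here.
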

\begin{rmk}
When $1\le m <\infty$, the product action of $G$ on $X$ is proved to be growth tight in \cite{5}. Thus, our result implies that there exists no gap between the quotient growth rates and the whole growth rate in this case. 
\end{rmk}

%The proof of the previous corollary is given in the sequel.

To conclude the introduction, we outline the proof of    Theorem \ref{mainthm} and describe the structure of the paper. The section \ref{Section2} introduces the preliminary materials and the final section \ref{Section6} gives the proof of Theorem \ref{productThm}.  The proof of Theorem \ref{mainthm}  follows the same strategy as in \cite{3} and can be subdivided into the following three steps contained respectively in Sections  \ref{Section3},  \ref{Section4} and  \ref{Section5}:
\begin{enumerate}
\item Construct a ``large'' tree $\mathcal T$ labeled by a free semigroup $\Gamma$ in $G$ in the sense that its growth rate  approximates that of $G$. Moreover, this is a rooted quasi-geodesic tree where each path is admissible. This is done by choosing a particular contracting element $f$ using the extension lemma. The dertailed statement is given in Lemma \ref{freesemigroup} in Section \ref{Section3}. 

\item Choose another  contracting element $h$, which is independent from $f$. Let $\mathbb S=\{g\ax(h): g\in G\}$. Following M. Bestvina, K. Bromberg and K. Fujiwara \cite{1}, we then construct the projection complex $\mathcal P_K(\mathbb S)$ and the quasi-tree of metric spaces $\mathcal C_L(\mathbb S)$. The space $\mathcal C_L(\mathbb S)$ is a quasi-tree, hence it is hyperbolic. The crucial point of these constructions is that it allows us to lift the geodesic from  $\mathcal C_L(\mathbb S)$ to $X$, and meanwhile keep tracking to the closeness with certain contracting subsets in $\mathbb S$.

\item
In the last step, we apply the theory of rotating family in F. Dahmani, V. Guirardell and D. Osin \cite{2} to analyze the features of elements $g$ in the kernel $\llangle h^n\rrangle$ for $n\gg 0$. This is achieved through the action of $G$ on the hyperbolic cone-off construction to the 
quasi-tree space $\mathcal C_L(\mathbb S)$ along $\mathbb S$.  

Via the step (2) and results in \cite{3}, we shall prove that the constructed quasi-geodesic tree $\mathcal T$ is sent to $\bar X=X/\llangle h^n\rrangle$ injectively. This completes the proof of  Theorem \ref{mainthm} in Section \ref{SSection52}.
\end{enumerate}

\section*{Acknowledgement}
The authors are grateful  to  the anonymous referee for her/his many comments which improves  the presentation and the readability of the article.

\section{Preliminaries}\label{Section2}
\begin{no}
\end{no}

\begin{enumerate}
\item $(X,d)$: a proper geodesic metric space.
\item $N_c(A)$: the $c$-neighborhood of a subset $A$ in $X$.
\item $\pi_A(B)$: the set of closest projection points from $B$ to $A$.  Denote by $d^{\pi}_A(B)$ the diameter of $\pi_A(B)$ and $d^{\pi}_A(B,C)$ the diameter of $\pi_A(B\cup C)$.
\item $d_{GH}(,)$: the Gromov-Hausdorff distance.
\item $[x,y]$: a geodesic segment from $x$ to $y$. $[x,y]_-=x$, $[x,y]_+=y$.
\item $[u,v]_\gamma$: the sub-path of the path $\gamma$ from $u$ to $v$.
\item $a\gtrsim b$: $b-a$ is bounded above by a universal constant. $a\thicksim b$ means $a\gtrsim b$ and $b\gtrsim a$.
\end{enumerate}

A map $f:(X_1,d_1)\longrightarrow (X_2,d_2)$ is called a
\textit{$(\lambda,c)-$quasi-isometric map} for  $\lambda\geq 1$,
$c>0$ if for any $x,y\in X_1$, we have
$$1/\lambda\cdot d_1(x,y)-c\leq d_2(f(x), f(y))\leq\lambda\cdot  d_1(x,y)+c.$$
In addition, if  $X_2$ stays within a finite neighborhood of $f(X_1)$, then $f$ is called a \textit{$(\lambda,c)-$quasi-isometry}, and we say that $X_1$ and $X_2$ are \textit{quasi-isometric}.

When $(X_1,d_1)$ is $[0,L]$ (resp. $[0,\infty)$ or $\mathbb{R}$), we say that $f$ is  a \textit{$(\lambda,c)-$quasi-geodesic (resp. quasi-geodesic ray, or bi-infinite quasi-geodesic)}.

A metric space is called a \textit{quasi-tree} if it is quasi-isometric to a tree.

 \subsection{Contracting property and admisible paths}
We introduce the basic notion of a contracting element and then recall several  related results from \cite{Y1} and \cite{4}. 
\begin{dfn}
Assume the group $G$ acts properly on the space $(X,d)$ and choose a basepoint $o\in X$. 
\begin{enumerate}
\item
A subset $S\subset (X,d)$ is called \textit{$C$-contracting} for a constant $C\ge 0$, if for every geodesic $\alpha$ in $X$ with $d(\alpha,S)\geq C$, we have $d^{\pi}_S(\alpha)\leq C$.
\item
An element $h\in G$ is called \textit{contracting} if  the map $n\in \mathbb Z \mapsto h^n o$ is a $(\lambda,c)-$bi-infinite quasi-geodesic for some $\lambda\geq 1$ and $c>0$, and  $\langle h\rangle o$ is $C$-contracting for some $C\ge 0$.
\end{enumerate}
\end{dfn}

\begin{rmk}
It is an easy exercise that the contracting property of an element does not depend on the choice of the basepoint.
\end{rmk}

A collection of $C$-contracting subsets  $\mathbb S$ in $(X, d)$ with a uniform contracting constant $C>0$ is called a \textit{$C$-contracting system} (or simply, a contracting system if $C$ is  clear in the conext).

By definition, two subsets $A$ and $B$ have   \textit{$\mathcal R$-bounded intersection}   for a function $\mathcal R: [0, \infty) \to [0, \infty)$ if  we have
    $$\diam{N_{c}(A)\bigcap N_{c}(B)}\leq \mathcal R(c),$$
for any $c>0$. A collection $\mathbb S$ of subsets has \textit{bounded $\mathcal R$-bounded intersection} if any two distinct $A, B\in \mathbb S$ have $\mathcal R$-bounded intersection. 
\begin{lemma}\label{bpEQbi}
A contracting system $\mathbb S$ has   $\mathcal R$-bounded intersection if and only if  $\mathbb S$ has bounded projection: there exists $D>0$ such that $d^{\pi}_A(B)\le D$ for any $A\ne B\in \mathbb S$.
\end{lemma}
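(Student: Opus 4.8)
The plan is to prove the two implications separately: the direction ``bounded projection $\Rightarrow$ $\mathcal R$-bounded intersection'' is elementary and uses no contraction, whereas the converse is where the contracting hypothesis is essential. For the easy direction, assume there is $D>0$ with $d^{\pi}_A(B)\le D$ for all distinct $A,B\in\mathbb S$. Given $x,y\in N_c(A)\cap N_c(B)$, choose $a_x,a_y\in A$ and $b_x,b_y\in B$ within $c$ of $x$, resp.\ $y$. Then $d(b_x,A)\le d(b_x,x)+d(x,a_x)\le 2c$, so a closest projection point $a_x'\in\pi_A(b_x)$ satisfies $d(a_x',x)\le d(a_x',b_x)+d(b_x,x)\le 3c$, and similarly $d(a_y',y)\le 3c$ for some $a_y'\in\pi_A(b_y)$. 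Since $a_x',a_y'\in\pi_A(B)$, we obtain $d(x,y)\le 3c+D+3c$, so $\mathbb S$ has $\mathcal R$-bounded intersection with $\mathcal R(c)=D+6c$.

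For the converse I would first isolate two standard features of any $C$-contracting set $S$. First, \emph{$S$ is $4C$-quasiconvex}: if $\gamma$ is a geodesic between two points of $S$ and $z\in\gamma$ has $d(z,S)>C$, take the maximal subsegment of $\gamma$ through $z$ on which $d(\cdot,S)\ge C$; its two endpoints are at distance exactly $C$ from $S$, the contracting inequality applied to this subsegment shows their projections to $S$ are $C$-close, so the subsegment has length $\le 3C$, forcing $d(z,S)\le 4C$. Second, \emph{geodesics detect far projections}: if $d(x,S)>C$ and a geodesic $[x,y]$ enters $N_C(S)$, let $u$ be its first point with $d(u,S)=C$; the contracting inequality on $[x,u]$ gives that $\pi_S(x)$ and $\pi_S(u)$ are $C$-close, and since $d(u,\pi_S(u))=C$ we get $d(u,a_x)\le 2C$ for any $a_x\in\pi_S(x)$, and symmetrically at the $y$-end when $d(y,S)>C$.

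Now suppose $\mathbb S$ has $\mathcal R$-bounded intersection; fix $A\ne B\in\mathbb S$, arbitrary $b_1,b_2\in B$, and $a_i\in\pi_A(b_i)$, and set $L:=d(a_1,a_2)$; bounding $L$ uniformly suffices since $d^{\pi}_A(B)$ is the supremum of all such $L$. If $d(b_1,A),d(b_2,A)>C$ and $[b_1,b_2]$ stays $C$-far from $A$, then $L\le d^{\pi}_A([b_1,b_2])\le C$ and we are done; otherwise the aim is to produce $u,v\in N_{4C}(A)\cap N_{4C}(B)$ with $d(u,a_1),d(v,a_2)\le 2C$. At the $b_i$-end: if $d(b_i,A)>C$, apply the second fact with $S=A$ to $[b_1,b_2]$ (which now enters $N_C(A)$) to get a point of $[b_1,b_2]$ within $2C$ of $a_i$; this point lies in $N_{2C}(A)$ and, by the quasiconvexity of $B$ applied to $[b_1,b_2]$, also in $N_{4C}(B)$. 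If instead $d(b_i,A)\le C$, then $d(a_i,b_i)=d(b_i,A)\le C$ already places $a_i$ in $N_C(A)\cap N_C(B)$, so take that point. In every case $d(u,v)\ge L-4C$, so $\mathcal R$-bounded intersection gives $L-4C\le\mathcal R(4C)$, i.e.\ $d^{\pi}_A(B)\le\mathcal R(4C)+4C$.

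The main obstacle is exactly this converse direction. Beyond the bookkeeping of the boundary cases (when $b_1$ or $b_2$ is already near $A$), the essential point is that a single near-intersection point would be worthless: what makes the argument go through is extracting \emph{two} points of $N_{4C}(A)\cap N_{4C}(B)$ lying roughly $L$ apart, thereby converting a large projection diameter into a large neighbourhood-intersection diameter. The quasiconvexity of contracting sets is the auxiliary input that guarantees these witness points sit near $B$, not merely near $A$.
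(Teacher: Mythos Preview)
The paper states this lemma without proof (it is treated as a standard fact about contracting systems), so there is no argument in the paper to compare against. Your proof is correct in both directions: the first direction is a clean elementary estimate, and for the converse you correctly isolate the two consequences of contraction (quasiconvexity and the ``entry point is near the projection'' phenomenon) and combine them to manufacture two witness points in $N_{4C}(A)\cap N_{4C}(B)$ roughly $L$ apart, which is exactly what is needed. The case analysis on whether $d(b_i,A)\le C$ or $>C$ and whether $[b_1,b_2]$ enters $N_C(A)$ is handled carefully, and the final bound $d^{\pi}_A(B)\le \mathcal R(4C)+4C$ is right. This is essentially the standard proof of this equivalence; nothing is missing.
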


We now list some useful properties about the contracting subsets.
\begin{lemma}\label{pp}
Let $A\subset (X,d)$ be $C$-contracting. The following holds:
\begin{enumerate}
\item (quasi-convexity) If $\gamma$ is a geodesic with endpoints in $A$, then $\gamma\subset N_{3C}(A)$.
\item (Lipschitz projection) If $\gamma$ is a geodesic in $X$, then $d^{\pi}_A(\gamma)\leq Length(\gamma)+C_1(C)=Length(\gamma)+C_1.$
\item  If $[a,b]$ is a geodesic in $X$, then $d^{\pi}_A([a,b]\lesssim d^{\pi}_A(\{a,b\}).$
\item Let $\alpha$ be a geodesic such that $d_A^{\pi}(\alpha_-, \alpha_+) >K>2C$. Then $$\diam{N_C(A)\cap \alpha}\ge K/2.$$ 
\end{enumerate}
\end{lemma}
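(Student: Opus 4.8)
The plan is to obtain all four items from one elementary device. Given a geodesic $\gamma$ and the $C$-contracting set $A$, I would decompose $\gamma$ along the closed subset $\gamma\cap N_C(A)$. The point is that if $\beta$ denotes the closure of a connected component of $\gamma\setminus N_C(A)$, then $\beta$ is a \emph{closed} geodesic with $d(\beta,A)\ge C$ (the distance exceeds $C$ in the interior of $\beta$ and equals $C$ at its boundary points), so the definition of a $C$-contracting set applies to $\beta$ verbatim and yields $d^{\pi}_A(\beta)\le C$; moreover each boundary point of such an ``excursion'' lies in $\gamma\cap N_C(A)$, so $\pi_A(\beta)$ is $C$-close to $\pi_A(z)$ for a suitable $z\in\gamma\cap N_C(A)$. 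For item (1) I would then argue by contradiction: if some $p\in\gamma$ satisfied $d(p,A)>3C$, then since $t\mapsto d(\gamma(t),A)$ is $1$-Lipschitz and vanishes at the two endpoints of $\gamma$, the points nearest to $p$ on either side at which this function equals $C$ bound a closed subsegment $[u,v]\ni p$ with $d([u,v],A)\ge C$ and $d(u,A)=d(v,A)=C$; the contracting property gives $d^{\pi}_A([u,v])\le C$, hence $d(u,v)\le d(u,\pi_A u)+d^{\pi}_A([u,v])+d(\pi_A v,v)\le 3C$, and then $d(p,A)\le\tfrac{1}{2}\,d(u,v)+C\le\tfrac{5}{2}C$, a contradiction.

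For item (2) I would set $\gamma'':=\gamma\cap N_C(A)$. If $\gamma''=\emptyset$ then $d(\gamma,A)\ge C$ and $d^{\pi}_A(\gamma)\le C$; otherwise the device above shows each excursion projects into the $C$-neighbourhood of $\pi_A(\gamma'')$, so $\pi_A(\gamma)\subset N_C(\pi_A(\gamma''))$ and thus $d^{\pi}_A(\gamma)\le d^{\pi}_A(\gamma'')+2C$. Since every $z\in\gamma''$ has $d(z,\pi_A z)=d(z,A)\le C$, one gets $d^{\pi}_A(\gamma'')\le\mathrm{Length}(\gamma)+2C$, so $C_1:=4C$ works. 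For item (3), if $[a,b]$ misses $N_C(A)$ then $d^{\pi}_A([a,b])\le C\le d^{\pi}_A(\{a,b\})+C$ and we are done; otherwise let $w_1,w_2$ be the first and last points of $[a,b]$ in $N_C(A)$. The end segments $[a,w_1]$ and $[w_2,b]$ lie at distance $\ge C$ from $A$, so their projections have diameter $\le C$; this identifies $\pi_A(a)$ with $\pi_A(w_1)$ and $\pi_A(b)$ with $\pi_A(w_2)$ up to additive error $C$, giving both $d^{\pi}_A(\{w_1,w_2\})\le d^{\pi}_A(\{a,b\})+2C$ and $d(w_1,w_2)\le d^{\pi}_A(\{w_1,w_2\})+2C$ (using $d(w_i,A)\le C$). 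Applying item (2) to $[w_1,w_2]$ gives $d^{\pi}_A([w_1,w_2])\le d(w_1,w_2)+C_1$, and gluing the two end segments back in (whose projections each meet $\pi_A([w_1,w_2])$) costs at most a further additive $4C$; altogether $d^{\pi}_A([a,b])\le d^{\pi}_A(\{a,b\})+O(C)$, i.e. $d^{\pi}_A([a,b])\lesssim d^{\pi}_A(\{a,b\})$.

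For item (4), first observe that $\alpha$ must meet $N_C(A)$, for otherwise $d(\alpha,A)\ge C$ would force $d^{\pi}_A(\alpha)\le C<K$. Let $x,y$ be the first and last points of $\alpha$ in $N_C(A)$; because $\alpha$ is a geodesic, $\operatorname{diam}\big(N_C(A)\cap\alpha\big)=d(x,y)$. The end segments $[\alpha_-,x]$ and $[y,\alpha_+]$ lie at distance $\ge C$ from $A$, so $d^{\pi}_A(\alpha_-,x)\le C$ and $d^{\pi}_A(y,\alpha_+)\le C$, whence $d^{\pi}_A(x,y)\ge d^{\pi}_A(\alpha_-,\alpha_+)-2C>K-2C$ (the hypothesis $K>2C$ rules out the projection diameter being concentrated on a single endpoint). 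As $d(x,A),d(y,A)\le C$, the projections $\pi_A x,\pi_A y$ are $C$-close to $x,y$, so $d(x,y)\ge d^{\pi}_A(x,y)-2C>K-4C$; balancing these $C$-errors against the standing hypothesis $K>2C$ should yield the stated bound $d(x,y)\ge K/2$.

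The step I expect to fight with is item (4): all of the inequalities just used are only \emph{coarsely} tight, so pinning down the exact constant $\tfrac{1}{2}$ — rather than settling for $d(x,y)\ge K-O(C)$, which already has the right shape — is where the bookkeeping of the $C$-errors has to be done with care (and, if necessary, the threshold $K>2C$ replaced by a larger explicit one). Once the ``decompose $\gamma$ along $N_C(A)$ and apply the contracting property on the excursions'' template is in place, items (1)–(3) are essentially routine.
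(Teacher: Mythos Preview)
Your approach matches the paper's, which in fact only sketches item (3) and leaves (1), (2), (4) as exercises; your proof of (3) via first-entry/last-exit points $w_1,w_2$ into $N_C(A)$ is exactly the paper's argument with $c,d$ in place of $w_1,w_2$. Your worry about item (4) is legitimate but not a gap: the chain $d(x,y)\ge d^{\pi}_A(x,y)-2C\ge d^{\pi}_A(\alpha_-,\alpha_+)-4C>K-4C$ is the intended computation, and the clean form $K/2$ only follows once $K\ge 8C$; since the lemma is invoked later only with $K\to\infty$, the coarse bound $K-O(C)$ you obtain is all that is needed.
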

\begin{proof}
The first two
and last properties are easy exercises and left to the interested reader, and here we only give a sketch of the proof of the third one.

Let $c$ and $d$ be the first enter point and last exit point of $[a,b]$ with respect to $A$. If we show the length of the subpath $[c,d]\subset[a,b]$ is bounded by $d^{\pi}_A(\{a,b\})$ and a constant depending only on $A$, by the second property and contracting property, we can finish the proof.

On the one hand, $d^{\pi}_A[a,c]$ and $d^{\pi}_A[d,b]$ are both small, where $[a,c]\subset [a,b]$ and $[d,b]\subset [a,b]$ . By the triangle inequality, $d^{\pi}_A(\{c,d\})$ is bounded by $d^{\pi}_A(\{a,b\})$ and a constant depending only on $A$. On the other hand, both $d(c,A)$ and $d(d,A)$ are small. Thus $d(c,d)=Length[c,d]$ is bounded by $d^{\pi}_A(\{a,b\})$ and a constant depending only on $A$.
\end{proof}

Let $A$ be a subset of elements in $G$. We usually write $A\cdot o$ or even simpler $Ao$ as the set of orbital points of $A$ on the basepoint $o$, i.e. $Ao=\{a\cdot o: a\in A\}$ .

For any contracting element $h\in G$, define  $$E(h)=\{g\in G \mid d_{GH}(\langle h\rangle o,g\langle h\rangle o)<\infty\},$$ where $d_{GH}$ denotes the Gromov-Hausdorff distance. The subset $\ax(h)=E(h)\cdot o$ is called the \textit{axis} of   $h$.

\begin{lemma}\label{fi}\cite[Lemma 2.11]{4}
Let $h$ be a contracting element. Then $\langle h\rangle$ is of finite index in $E(h)$ and $E(h)=\{g\in G\mid \exists n\in \mathbb{Z}\setminus 0,gh^ng^{-1}=h^{\pm n}\}.$ Moreover, the collection $\{g\ax(h): g\in G\}$ of axes is  a contracting system with bounded intersection.
\end{lemma}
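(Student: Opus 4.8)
The plan is to establish the three assertions in turn: (i) $[E(h):\langle h\rangle]<\infty$; (ii) the conjugacy description of $E(h)$; (iii) bounded intersection of the axis collection. For (i) the decisive point, which I expect to be the main obstacle, is a \emph{uniform} geometric fact: the orbit $E(h)\cdot o$ lies in a neighborhood of the quasi-line $L:=\langle h\rangle o$ whose radius $D_0$ depends only on the contracting constant $C$ and the quasi-geodesic constants of $\langle h\rangle o$, not on the chosen element of $E(h)$. Granting this, for $g\in E(h)$ I would translate $g$ by a suitable power of $h$ to a representative $g'$ of the coset $\langle h\rangle g$ with $d(g'o,o)\le D_0+1$; properness then leaves only finitely many possible $g'$, hence finitely many cosets. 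To prove the uniform bound, fix $g$ with $D:=d_{GH}(L,gL)$ large; after replacing $g$ by $g^{-1}$ if necessary, pick $b^*\in gL$ with $d(b^*,L)\sim D$. First I would show $gL$ cannot remain at distance $\ge T$ from $L$ along a sub-ray, for a uniform threshold $T$: a geodesic between two far-apart points of such a ray would, by the Morse-type stability of the contracting set $gL$ (a consequence of Lemma \ref{pp}), stay at distance $\ge C$ from $L$, hence project to a set of diameter $\le C$ by the contracting property, while its endpoints, being within the finite distance $D$ of $L$, project to points of $L$ arbitrarily far apart, a contradiction. So $gL$ returns within $T$ of $L$ infinitely often on each side of $b^*$; choosing such return points $b_-,b_+$ bracketing $b^*$, the geodesic $[b_-,b_+]$ fellow-travels $gL$ and so passes uniformly close to $b^*$, but $[b_-,b_+]$ concatenated with the short geodesics from $b_\pm$ to their projections on $L$ is a quasi-geodesic of uniformly bounded quality with endpoints on $L$, hence stays uniformly close to $L$ by the Morse property of $L$; comparing the two estimates at $b^*$ bounds $D$.

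For (ii), write $E'(h)$ for the right-hand side. The inclusion $E'(h)\subseteq E(h)$ is direct: if $gh^ng^{-1}=h^{\pm n}$ with $n\ne 0$ then $g$ normalizes $\langle h^n\rangle$, so $g\langle h^n\rangle o=\langle h^n\rangle(go)$ has Hausdorff distance $\le d(go,o)$ from $\langle h^n\rangle o$, which has finite Hausdorff distance from $L$ because $\langle h\rangle o$ is a quasi-geodesic; chaining these gives $d_{GH}(L,gL)<\infty$. For the reverse inclusion I would use (i): $E(h)$ is infinite virtually cyclic, so it has a maximal finite normal subgroup $F$ with $E(h)/F\cong\mathbb Z$ or $\cong D_\infty$, and the infinite-order element $h$ has image in the translation part. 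Given $g\in E(h)$, the element $gh^ng^{-1}h^{\mp n}$ — sign chosen according to whether the image of $g$ is a translation or a reflection — maps to the identity in $E(h)/F$, hence lies in the finite set $F$ for every $n$; by pigeonhole two indices $n_1\ne n_2$ give the same element of $F$, and unwinding that identity shows $g$ commutes with $h^{\,n_1-n_2}$ in the translation case, or conjugates it to its inverse in the reflection case. Thus $g\in E'(h)$.

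For (iii), each $g\cdot\ax(h)$ is an isometric copy of $\ax(h)$, hence $C$-contracting with the same constant, so $\{g\ax(h):g\in G\}$ is a contracting system and by Lemma \ref{bpEQbi} it suffices to prove bounded projection. By (i)--(ii), $g\ax(h)=\ax(h)$ iff $g\in E(h)$, and $\langle h\rangle$ acts cocompactly on $\ax(h)=E(h)o$ since $\ax(h)\subset N_{D_0}(L)$. If the projections were unbounded I would take $g_n\notin E(h)$ with $d^\pi_{\ax(h)}(g_n\ax(h))\to\infty$; applying Lemma \ref{pp}(4) to a geodesic joining two far-projecting points of $g_n\ax(h)$, which lies in $N_{3C}(g_n\ax(h))$ by Lemma \ref{pp}(1), shows $g_n\ax(h)$ runs within $O(C)$ of $\ax(h)$ along a segment of length $\to\infty$. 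Translating $g_n$ by an element of $\langle h\rangle$ — which changes neither $d^\pi_{\ax(h)}$ nor membership in $E(h)$, as $\langle h\rangle$ stabilizes $\ax(h)$ — I may assume $g_n\ax(h)$ passes within a uniform distance of $o$. Then $g_nf_n\in B(o,R)$ for some $f_n\in E(h)$ and a uniform $R$; by properness $g_nf_n$ equals a fixed element $w_0$ along a subsequence, so $g_n\ax(h)=w_0f_n^{-1}\ax(h)=w_0\ax(h)$ is independent of $n$, contradicting $d^\pi_{\ax(h)}(g_n\ax(h))\to\infty$.

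The heart of the matter is the uniform Hausdorff bound in (i): passing from ``finite Hausdorff distance'' to ``Hausdorff distance bounded by a function of $C$ alone'' cannot be done by routine estimates — the unknown distance reappears on both sides of the obvious inequalities — and genuinely requires the quantitative use of the contracting/Morse property sketched above. The same underlying phenomenon, that an unbounded coarse overlap of two contracting quasi-lines already forces them to be at finite Hausdorff distance, is what drives (iii), but there properness and Lemma \ref{bpEQbi} package it cleanly once (i) and (ii) are in hand.
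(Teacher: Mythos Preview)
The paper does not prove this lemma: it is quoted verbatim from \cite[Lemma 2.11]{4} and no argument is given. There is therefore nothing to compare your proposal against in this paper; you have supplied a self-contained proof of a result the authors simply import.

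Your outline is essentially correct and follows the standard route. Two small points of care:

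In (i), the assertion that the geodesic $[p,q]$ between two points of the far sub-ray ``stays at distance $\ge C$ from $L$'' needs slightly more than Lemma~\ref{pp}(1): that lemma only gives $[p,q]\subset N_{3C}(gL)$, not that $[p,q]$ fellow-travels the particular \emph{segment} of $gL$ between $p$ and $q$. You need the quasi-line structure of $gL$ here (a geodesic with endpoints on a contracting quasi-line tracks the arc between them), which is true but is a mild strengthening of what is stated.

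In (iii), after the translation step your element is $h^{k_n}g_n$, not $g_n$; the correct conclusion is $h^{k_n}g_nf_n=w_0$ along a subsequence, hence $h^{k_n}g_n\ax(h)=w_0\ax(h)$ is fixed, and since left translation by $h^{k_n}$ preserves $d^\pi_{\ax(h)}$ this already contradicts $d^\pi_{\ax(h)}(g_n\ax(h))\to\infty$. Your written version collapses the $h^{k_n}$, which makes the last line read oddly, but the argument is fine once this is unwound.
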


Note that if $g\in E(h)$ and $g$ is of infinite order, then $E(g)=E(h)$.  Two contracting elements $h, k$ are called \textit{independent} if the collection  $\{g\ax(h), g\ax(k): g\in G\}$ has bounded intersection.

\subsection{Admissible paths}
We now introduce the notion of admissible paths relative to a contracting system $\mathbb S$ with $\mathcal R$-bounded intersection. The system $\mathbb S$ will be the set of all $G$-translates of axis of a contracting element (cf. Lemma \ref{fi}). 
\begin{dfn}[Admissible Path]\label{AdmDef} 
Given constants $K, \theta \ge 0$, a path $\gamma$  is called \textit{$(K,
\theta)$-admissible} in  $X$, if  the path $\gamma$ {is piecewise-geodesic and} contains an {alternating} sequence of  disjoint geodesic subpaths $p_i$ $(0\le i\le n)$ in this order, each associated with a contracting subset $S_i \in \mathbb S$, with the following   called \textit{Long Local} and \textit{Bounded Projection} properties:
%it admits a \textit{$(D, \tau)$-admissible} structure as follows.
\begin{enumerate}
%\item
%Every geodesic in
%$\gamma$ has two endpoints in $X_i \in \mathbb Z$,

\item[\textbf{(LL)}]
Each $p_i$ has length bigger than  $K$, except {when} $(p_i)_- =\gamma_-$ or $(p_i)_+=\gamma_+$,

\item[\textbf{(BP)}]
For each $S_i$,  we have  
$$
d^{\pi}_{S_i}((p_{i})_+,(p_{i+1})_-)\le \theta
$$
and 
$$
d^{\pi}_{S_i}((p_{i-1})_+, (p_{i})_-)\le \theta
$$
when $(p_{-1})_+:=\gamma_-$ and $(p_{n+1})_-:=\gamma_+$ by convention.

%\item
%For each $X_i$,  a maximal subpath in $\gamma$ with only one endpoint
%in $X_i$   has $\tau$-bounded projection to
%$X_i$, and

%\item
%Either any two $X_i, X_{i+1}$(if defined) have $\nu$-bounded
%intersection, or the quasigeodesic, say $q_{i+1}$, between them has
%length bigger than $D$.
\end{enumerate}
\paragraph{\textbf{Saturation}} The collection of $S_i \in \mathbb S$ indexed as above, denoted by $\mathbb S(\gamma)$, will be referred to as contracting subsets for $\gamma$. The union of all $S_i \in \mathbb S(\gamma)$ is called the \textit{saturation} of $\gamma$. 
\end{dfn}

The following property about $(K,\tau)$-admissible path will be  used in the sequel.
\begin{prop}\label{admis}\cite[Proposition 2.7]{4}
For any $\tau>0$, there are constants $B=B(\tau)$, $K=K(\tau)>0$ such that for
any $(K,\tau)$-admissible path $\gamma$ and for any contracting subset $X_i$ associated to the subpath $p_i(1\leq i\leq n)$, we have:
$$d^\pi_{X_i}(\gamma_1)\leq B,d^\pi_{X_i}(\gamma_2)\leq B,$$
where $\gamma_1=[\gamma_-,(p_i)_-]_{\gamma},$  $\gamma_2=[(p_i)_+,\gamma_+]_{\gamma}.$
\end{prop}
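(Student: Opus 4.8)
The plan is to establish the bound for $\gamma_1$; the one for $\gamma_2$ then follows by the symmetric argument, reversing the orientation of $\gamma$. Decompose $\gamma_1$ as the alternating concatenation $e_0\,p_0\,q_0\,p_1\,q_1\cdots p_{i-1}\,q_{i-1}$, where $e_0=[\gamma_-,(p_0)_-]$ is the (possibly trivial) initial stub and $q_j=[(p_j)_+,(p_{j+1})_-]$ is the geodesic joining consecutive contracting pieces, so that $q_{i-1}$ terminates at the point $(p_i)_-\in X_i=S_i$. Each $p_j$ with $j\le i-1$ is associated with a contracting subset $S_j\in\mathbb S$ distinct from $X_i$ and has its endpoints on $S_j$; hence Lemma~\ref{bpEQbi} supplies a uniform $D$ with $d^\pi_{X_i}(S_j)\le D$, and Lemma~\ref{pp}(3) then yields $d^\pi_{X_i}(p_j)\lesssim d^\pi_{X_i}(\{(p_j)_-,(p_j)_+\})\le D$. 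Likewise any geodesic sub-segment of $\gamma_1$ staying at distance $\ge C$ from $X_i$ has $d^\pi_{X_i}$-diameter at most $C$ by the contracting property, and by \textbf{(BP)} applied to $X_i$ itself, $d^\pi_{X_i}((p_{i-1})_+,(p_i)_-)\le\theta$, so the projection of $q_{i-1}$ is pinned within a distance $\lesssim\theta$ of $(p_i)_-$. The only way $d^\pi_{X_i}(\gamma_1)$ could fail to be uniformly bounded is therefore that $\gamma_1$ approaches $X_i$ repeatedly, so that these individually bounded contributions drift apart along $X_i$.

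Ruling this out is the heart of the matter, and it is here that $K=K(\tau)$ must be chosen large. The claim — which I would prove by contradiction — is that $\gamma_1$ comes within $C$ of $X_i$ only along a uniformly bounded terminal sub-path ending at $(p_i)_-$. Suppose instead that some $u\in\gamma_1$, lying on a piece $p_j$ or $q_j$ with $j\le i-1$, has $d(u,X_i)<C$ while the sub-path of $\gamma$ from $u$ to $(p_i)_-$ still contains a full long piece $p_{j'}$ (one checks that this holds unless $u$ already lies within a controlled distance of $(p_i)_-$). Since $u$ and $(p_i)_-$ both lie in $N_C(X_i)$, the geodesic $[u,(p_i)_-]$ lies in a bounded neighbourhood $N_{C'}(X_i)$ by quasi-convexity of contracting sets; combined with the fact — part of the same local-to-global package for admissible paths, available here as the inductive hypothesis on paths with fewer pieces — that $\gamma$ fellow-travels $[u,(p_i)_-]$, this forces $p_{j'}\subset N_{C''}(X_i)$. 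But $p_{j'}\subset N_{3C}(S_{j'})$ as well, so $p_{j'}\subset N_{C''}(X_i)\cap N_{3C}(S_{j'})$, a set of diameter at most $\mathcal R(\max\{C'',3C\})$ by the bounded-intersection hypothesis, contradicting $Length(p_{j'})>K$ as soon as $K>\mathcal R(\max\{C'',3C\})$. Hence the part of $\gamma_1$ within $C$ of $X_i$ is confined near $(p_i)_-$, and the same trapping argument, fed into the chain of \textbf{(BP)} relations, forces all the projections $\pi_{X_i}(S_j)$ with $0\le j\le i-1$ to lie within a uniform distance of $(p_i)_-$.

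With this in hand the estimate assembles: away from a bounded neighbourhood of $X_i$ the path $\gamma_1$ breaks into geodesic sub-segments, each projecting — via the bound $d^\pi_{X_i}(p_j)\le D$ for the contracting pieces and the contracting property for the far parts of the $q_j$ and of $e_0$ — into a ball whose radius is bounded in terms of $D$ about some $\pi_{X_i}(S_j)$, hence about $(p_i)_-$, while the part of $\gamma_1$ near $X_i$ contributes a distance $\lesssim\theta$; collecting everything gives $d^\pi_{X_i}(\gamma_1)\le B$ with $B=B(\tau)$ depending only on $C$, $D$, $\theta$ and $\mathcal R$. The genuine obstacle throughout is the second step: a naive piece-by-piece estimate only produces a bound proportional to the number $i$ of contracting pieces, and removing this $i$-dependence is precisely the ``no-backtracking'' phenomenon above, which in turn dictates choosing $K$ large enough to outrun $\mathcal R$ (and the quasi-geodesic constants of admissible paths). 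Once $K$ is so chosen, the constant $B(\tau)$ is read off from $C$, $D$, $\theta$ and $\mathcal R$.
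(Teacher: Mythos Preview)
The paper does not prove this proposition; it is quoted from \cite[Proposition~2.7]{4} without argument. So there is no in-paper proof to compare against, and I comment on your proposal directly.

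Your architecture --- induction on the number of contracting pieces, with the (\textbf{LL}) bound $K$ chosen to beat the bounded-intersection function $\mathcal R$ --- is correct, and you are right that the ``no-backtracking'' step is where the content lies. The gap is in how you execute it. You assert that ``$\gamma$ fellow-travels $[u,(p_i)_-]$'' as part of the inductive hypothesis, and from this deduce $p_{j'}\subset N_{C''}(X_i)$. But the inductive hypothesis is bounded \emph{projection}; it tells you the sub-admissible path is a quasi-geodesic, not that it stays close to the geodesic with the same endpoints. Since $X$ is a general proper geodesic space, not hyperbolic, quasi-geodesics need not fellow-travel geodesics, so this implication simply fails. (You could instead invoke the Morse property of the contracting set $X_i$ --- quasi-geodesics with endpoints near $X_i$ stay near $X_i$ --- but that is a separate non-trivial fact whose constants you would have to control against the inductively produced quasi-geodesic constants, and you do not mention it.)

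The clean repair reverses the direction of comparison: you do not need $p_{j'}$ close to $X_i$, only that $N_{C'}(X_i)$ and $N_C(S_{j'})$ overlap in a set of large diameter. Quasi-convexity gives $[u,(p_i)_-]\subset N_{C'}(X_i)$, as you say. For the other inclusion, the inductive hypothesis applied to the sub-admissible path at the piece $p_{j'}$ gives $d^\pi_{S_{j'}}(u,(p_i)_-)\ge K-2B$, and then Lemma~\ref{pp}(4) forces the \emph{geodesic} $[u,(p_i)_-]$ to meet $N_C(S_{j'})$ in a segment of length $\gtrsim K/2$. Hence $\diam{N_{C'}(X_i)\cap N_C(S_{j'})}\gtrsim K/2$, contradicting $\mathcal R$-bounded intersection once $K$ is large. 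This is the mechanism in \cite{4}; alternatively one can dispense with the geometric picture and iterate the Behrstock dichotomy (axiom \textbf{(PC3)} of Lemma~\ref{bp}, which holds for any contracting system with bounded intersection) down the chain $S_{i-1},S_{i-2},\ldots$ to pin $\pi_{X_i}(x)$ near $(p_i)_-$ for every $x\in\gamma_1$ directly.
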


 The following notion of transitonal/deep points will be useful in proving Lemma \ref{lastlem}.
 
\begin{dfn}
Fix a contracting system $\mathbb S$ with bounded intersection. 
We say that a geodesic $\gamma$ in $X$ contains \textit{a $(\epsilon,M)$-deep point} relative to $\mathbb S$ if there exists a connected subsegment of $\gamma$ with  length at least $2M$ contained in the $\epsilon$-neighborhood
of $S$ for some $S\in \mathbb S$. 

If $\gamma$ does not contain any  $(\epsilon,M)$-deep point, then it is called \textit{$(\epsilon, M)$-transitional} relative to $\mathbb S$.
\end{dfn}

\section{Construct  large trees in $G$}\label{Section3}
This section presents the first step in  the proof of Theorem \ref{mainthm}. Namely, we construct a quasi-geodesic tree $T$ rooted at a basepoint $o\in X$ so that the growth rate $\delta_T$ can be arbitrarily close to $\delta_G$. This is essentially done in \cite[Section 3]{4}. We now briefly review the construction, which replies on the following extension lemma.

\begin{lemma}\label{extend3}
There exists a set $F=\{f_1, f_2, f_3\}$ of three contracting elements and a constant $\tau>0$ such that for any $g, h\in G$ there exists $f\in F$ with the following property
$$
\max\{d^{\pi}_{\ax(f)}([o, go]), d^{\pi}_{\ax(f)}([o, ho])\}\le \tau.
$$
\end{lemma}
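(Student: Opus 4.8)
The plan is to derive this from the general extension lemma for contracting elements, which is available because $G$ is non-elementary and admits a contracting element. First I would invoke the classification of the action: since $G$ is non-elementary with a contracting element, there exist (at least) three pairwise independent contracting elements $h_1, h_2, h_3 \in G$, in the sense that the collection $\{g\ax(h_i) : g \in G,\ 1 \le i \le 3\}$ has bounded intersection (one obtains independent contracting elements by conjugating a single contracting element by generic elements, or by a ping-pong argument; the number three is what is needed so that no pair of projection targets $[o,go]$, $[o,ho]$ can simultaneously obstruct all three axes). The key input is the extension lemma of \cite{4} (essentially \cite[Lemma 2.8 or similar]{4}): for a contracting element $f$ and points $x, y \in X$, if the geodesic $[x,y]$ has large projection to $\ax(f)$, then one can extend/adjust to control the configuration — but here I only need the \emph{negation}, namely that the set of group elements $g$ for which $[o,go]$ has large projection to a \emph{fixed} axis $\ax(h_i)$ is a ``thin'' set, and cannot be all of $G$.

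The main step is a counting/pigeonhole argument. Fix a threshold $\tau > 0$ (to be the constant in the statement). Suppose for contradiction that the conclusion fails for every choice of three contracting elements; then in particular for $F = \{h_1, h_2, h_3\}$ there exist $g, h \in G$ such that for each $i$, at least one of $d^{\pi}_{\ax(h_i)}([o,go])$, $d^{\pi}_{\ax(h_i)}([o,ho])$ exceeds $\tau$. By Lemma \ref{pp}(4), a geodesic $\alpha$ with $d^{\pi}_{\ax(h_i)}(\alpha_-,\alpha_+) > K > 2C$ must spend a definite length ($\ge K/2$) in the $C$-neighborhood of $\ax(h_i)$. Apply this to the at-most-two geodesics $[o,go]$ and $[o,ho]$: if $\tau$ is chosen large enough, each of the three axes $\ax(h_1),\ax(h_2),\ax(h_3)$ forces one of these two geodesics to make a deep excursion of length $\ge \tau/2$ near it. Since there are three axes but only two geodesics, by pigeonhole some single geodesic, say $[o,go]$, has long deep excursions near two distinct axes among the $g\ax(h_i)$; but pairwise bounded intersection (Lemma \ref{bpEQbi} and Lemma \ref{fi}) bounds the overlap $\diam{N_C(\ax(h_i)) \cap N_C(\ax(h_j))}$ by a uniform constant $\mathcal R(C)$, while a single geodesic that is deep near one axis can only be deep near a second axis inside that bounded-intersection region — a contradiction once $\tau/2$ exceeds the relevant intersection bound. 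Thus for $\tau$ sufficiently large the conclusion holds with $F = \{h_1,h_2,h_3\}$.

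I expect the main obstacle to be the bookkeeping that turns ``large projection'' into ``long deep excursion'' and then into a genuine contradiction via bounded intersection — in particular making sure the constants are chosen in the right order: first the uniform contracting constant $C$ for the system $\{g\ax(h_i)\}$, then the bounded-intersection function $\mathcal R$ from Lemma \ref{fi}, then finally $\tau$ large enough that $\tau/2 > \mathcal R(C) + (\text{error terms from Lemma \ref{pp}(2), Lipschitz projection})$. A secondary subtlety is verifying that one genuinely has three \emph{pairwise} independent contracting elements (not just two), which is where the non-elementariness of $G$ is essential; this should follow from the standard fact that the set of contracting elements is large and that $E(h)$ has infinite index in $G$ for non-elementary $G$, so one can iterate to peel off independent conjugates. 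The rest is routine once the extension lemma machinery of \cite{4} is cited.
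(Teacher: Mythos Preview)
The paper does not prove this lemma; it is stated without proof and implicitly cited from \cite[Section 3]{4}, where the extension lemma originates. So there is no paper proof to compare against, only the standard argument in \cite{4}, which is indeed the pigeonhole argument you outline: each geodesic $[o,go]$ can have large projection to at most one of the three axes, so two geodesics obstruct at most two axes and the third survives.

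Your proposal follows this line correctly up to the pigeonhole step, but the justification of the key claim --- that a single geodesic cannot have long deep excursions near two distinct axes --- has a gap as written. Your sentence ``a single geodesic that is deep near one axis can only be deep near a second axis inside that bounded-intersection region'' is false for arbitrary geodesics: a geodesic can perfectly well visit $N_C(\ax(h_i))$ and $N_C(\ax(h_j))$ on disjoint subsegments, and bounded intersection says nothing about that. What makes the argument work here, and what you do not say, is that $o\in \ax(h_i)$ for every $i$ (since $\ax(h_i)=E(h_i)\cdot o$). Because $[o,go]$ starts at a point of $\ax(h_i)$, the contracting property forces the deep excursion in $N_{C'}(\ax(h_i))$ to be an \emph{initial} segment of $[o,go]$ of length comparable to $\tau$: indeed, if $y_i=\pi_{\ax(h_i)}(go)$ then $[o,go]$ passes within a uniform constant of $y_i$, and the segment from $o$ to that point has both endpoints near $\ax(h_i)$, hence lies in a uniform neighborhood of $\ax(h_i)$ by quasi-convexity (Lemma~\ref{pp}(1)). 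Applying this to both $i$ and $j$ gives two initial segments of length $\gtrsim\tau$ inside $N_{C'}(\ax(h_i))$ and $N_{C'}(\ax(h_j))$ respectively; the shorter of the two then lies in $N_{C'}(\ax(h_i))\cap N_{C'}(\ax(h_j))$, contradicting bounded intersection once $\tau>\mathcal R(C')$. Alternatively, one can bypass the initial-segment geometry entirely and use the Behrstock-type inequality (\textbf{PC 3} of Lemma~\ref{bp}): if $d^{\pi}_{\ax(h_i)}(o,go)$ is large then $d^{\pi}_{\ax(h_i)}(\ax(h_j),go)$ is large (since $\pi_{\ax(h_i)}(\ax(h_j))$ is near $o$), which forces $d^{\pi}_{\ax(h_j)}(\ax(h_i),go)\le\theta$, hence $d^{\pi}_{\ax(h_j)}(o,go)\le\theta$.

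With either of these observations inserted, your proof is correct and is essentially the standard one.
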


Let  $\mathbb W(A)$ be the set of   all finite words over an alphabet set  $A$.  Consider an evaluation map $\iota: A \to G$, so we can define an \textit{extension map} $\Phi: \mathbb W(A) \to G$ as follows: for any word $W=a_1a_2\cdots a_n \in \mathbb W(A)$, set $$\Phi(W)=\iota(a_1) \cdot f_1\cdot\iota(a_2)\cdot f_2\cdot \cdots \cdot \iota(a_{n-1})\cdot f_{n-1} \cdot\iota(a_n) \in G,$$ where $f_i\in F$ is supplied by the extension lemma \ref{extend3} for each pair $(a_i, a_{i+1})$. 

Denote  $K=\max\{d(o, f_io): i=1, 2, 3\}$, we see that $\Phi(W)$ produces a $(K, \tau)$-admissible path as follows
\begin{equation}\label{AdmisEQ}
\gamma=\path{\iota(a_1)} \cdot\path{f_1}\cdot\path{\iota(a_2)}\cdot \path{f_2}\cdot \cdots \cdot \path{\iota(a_{n-1})}\cdot \path{f_{n-1}} \cdot\path{\iota(a_n)}
\end{equation}
where $\path{\iota(a_i)}$ denotes a geodesic $\iota(a_1)f_1\cdots \iota(a_{i-1})f_{n-1}\cdot [o, a_io]$ and $\path{f_i}$   a geodesic $\iota(a_1)f_1\cdots \iota(a_{i-1})\cdot [o, f_io]$.

Given $n, \Delta>0,$  define the annulus set: 
$$
A(o, n, \Delta) =\{g\in G: |d(o, go)-n|\le \Delta\}.
$$

The main step in constructing a quasi-geodesic tree is summarized in the following statement:

There exist  constants $C, \tau>0$ such that for each $n\gg 0$, there exists a $C$-separated set $A$ from $A(n, \Delta)$ with the following property:      there is a common $f\in F$ so that for  each pair $(a, a') \in A\times A$, we have
$$
\max\{d^{\pi}_{\ax(f)}([o, ao]), d^{\pi}_{\ax(f)}([o, a'o])\}\le \theta.
$$
 Consider the   path (\ref{AdmisEQ}) for $f_i=f$, which is labeled  by a word over the   alphabet set $Af$. By Proposition \ref{admis}, it is a $(K, \theta)$-admisisble path, so it is a $(\lambda, c)$-quasi-geodesic. Furthermore, the sufficiently large  $C$-separation implies that the extension map as above is injective. Hence, $Af$ generates a free semi-group $\Gamma$. 
 
 One can define the Cayley graph of a semi-group relative to a generating set in the same way  in a group. When $Af$ is the free base of $\Gamma$, the \textit{standard Cayley graph} $\mathcal T$ is a tree rooted at identity where each vertex admits $\sharp Af$ edges. This tree structure allows one to compute  the growth rate $\delta_\Gamma$. Indeed, for  any $0<\delta<\delta_G$, there exists a large enough integer $n$ such that the growth rate of $\Gamma o$   is greater than $\delta$.   See the details in \cite[Section 3]{4}.

  We summarize the above discussion into the following result.

\begin{lemma}\label{freesemigroup}
There exists a contracting element $f\in G$ and $K=K(f), \theta=\theta(f)>0$ with the following property.
\begin{enumerate}
\item
For any $0<\delta<\delta_G$, there exists a free-semigroup $\Gamma$ and a quasi-isometric map from the standard Cayley graph $\mathcal T$ of $\Gamma$ to $X$ defined by 
$$
g\in \mathcal T \mapsto go\in X 
$$
such that each geodesic branch  in $\mathcal T$ is sent to a   $(K, \theta)$-admissible  path $\gamma$ relative to the contracting system $\{g\ax(f): g\in G\}$.  
\item
Moreover,  $\delta_\Gamma>\delta$ and $\sharp \Gamma\cap N(o, r)\asymp \exp(\delta_\Gamma r).$

\item
Let $h$ be a contracting element which is independent from $f$. There exists $\epsilon=\epsilon(h,\delta), M=M(h, \delta)>0$ such that any geodesic $\alpha$ with two endpoints in $\gamma$ is $(\epsilon, M)$-transitonal relative to $\mathbb S=\{g\ax(h): g\in G\}$.
\end{enumerate}
\end{lemma}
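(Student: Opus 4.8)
The plan is to treat the three parts as essentially one construction, with parts (1) and (2) being a recapitulation of the semigroup construction from \cite[Section 3]{4} sketched above, and part (3) being the genuinely new assertion to be checked carefully. For part (1), I would apply the extension lemma \ref{extend3} to produce $F=\{f_1,f_2,f_3\}$ and $\tau$; then, given $\delta<\delta_G$, choose $\Delta>0$ and $n\gg 0$ so that $\sharp A(o,n,\Delta) \gtrsim \exp(\delta n)$ (this is where the definition of $\delta_G$ as a $\limsup$ enters, together with a standard pigeonhole over annuli). Next I would extract a $C$-separated subset $A\subset A(o,n,\Delta)$, with $C$ chosen large enough that the extension map $\Phi$ is injective on words over $Af$; by a pigeonhole over the finitely many choices $f\in F$ one can arrange a single common $f$ working for all pairs in $A\times A$. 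Proposition \ref{admis} then certifies that each word yields a $(K,\theta)$-admissible path, hence a $(\lambda,c)$-quasi-geodesic with uniform constants; since the tree $\mathcal T$ is a union of such branches meeting only at the root pattern, the orbit map $g\mapsto go$ is a quasi-isometric embedding. Part (2) follows by computing $\delta_\Gamma$ directly from the $\sharp Af$-regular tree structure: $\delta_\Gamma = \log \sharp(Af)$ up to the quasi-geodesic distortion, and for $n$ large this exceeds $\delta$; the two-sided estimate $\sharp(\Gamma\cap N(o,r))\asymp \exp(\delta_\Gamma r)$ is then immediate from the quasi-isometry.

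For part (3), fix a contracting element $h$ independent from $f$, and let $\mathbb S=\{g\ax(h):g\in G\}$. I want to show that for suitable $\epsilon,M$ (depending on $h$ and, through $K,\theta$, on $\delta$), no geodesic $\alpha$ joining two points of an admissible branch $\gamma$ can spend length $\ge 2M$ inside the $\epsilon$-neighborhood of any $S\in\mathbb S$. The key input is that $\gamma$ is built out of geodesic pieces lying (by quasi-convexity, Lemma \ref{pp}(1)) in bounded neighborhoods of translates $g\ax(f)$, and that $\{g\ax(f):g\in G\}$ and $\{g\ax(h):g\in G\}$ together form a system with \emph{bounded intersection} by the independence hypothesis and Lemma \ref{fi}. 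So if $\alpha$ had a deep point in some $S=g\ax(h)$, a long subsegment of $\alpha$ would lie $\epsilon$-close to $S$; since $\alpha$ has endpoints on $\gamma$ and $\gamma$ is a uniform-quality quasi-geodesic, $\alpha$ fellow-travels $\gamma$ within a constant $\kappa=\kappa(\lambda,c)$, so a long subsegment of $\gamma$ — and hence a long subsegment of one of its geodesic pieces $p_i$ — would lie within $\epsilon+\kappa$ of $S$. But $p_i\subset N_{3C}(S_i)$ with $S_i=g_i\ax(f)$, so we would get a long segment inside $N_{\epsilon+\kappa}(S)\cap N_{3C+\kappa}(S_i)$; bounded intersection of the combined system forces this to have diameter $\le \mathcal R(\max\{\epsilon+\kappa,\,3C+\kappa\})$, a constant independent of $M$. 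Choosing $M$ larger than this bound yields a contradiction, \emph{provided} the deep segment is confined to a single piece $p_i$.

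The main obstacle, and the point needing the most care, is exactly that last proviso: a priori the portion of $\alpha$ that is $\epsilon$-close to $S$ might straddle a transition point of $\gamma$, i.e., run along two consecutive pieces $p_i, p_{i+1}$ (with distinct contracting subsets $S_i, S_{i+1}$) and the connecting geodesic between them. To handle this I would use the \textbf{(BP)} property together with Proposition \ref{admis}: the projection of $\gamma$ to any $S\in\mathbb S$ is controlled because $\gamma$ itself is admissible, so $\gamma$ cannot make a long excursion near any single translate of $\ax(h)$ without that excursion being absorbed into one admissible piece. Concretely, one shows that if $\alpha$ were $\epsilon$-deep in $S$ over length $2M$, then $d^\pi_S(\gamma_-,\gamma_+)$ would be at least of order $M$ by Lemma \ref{pp}(4) applied to $\alpha$ and transferred to $\gamma$ via fellow-traveling; but an admissibility/projection estimate (again Lemma \ref{pp}(2)–(3) and \textbf{(BP)}, summed over the at most two pieces the excursion can meet, since two pieces already exceed the bounded-intersection length) caps $d^\pi_S$ of $\gamma$ by a constant depending only on $C,K,\theta,\lambda,c$. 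Reconciling the linear-in-$M$ lower bound with the constant upper bound pins down the threshold $M=M(h,\delta)$ and simultaneously forces $\epsilon=\epsilon(h,\delta)$ small enough that the fellow-traveling constant $\kappa$ does not swamp the estimate. This bookkeeping — tracking which constants depend on $h$, which on $\delta$ (through $K,\theta,\lambda,c$), and which are absolute — is the delicate part; the geometric content is just quasi-convexity plus bounded intersection of two independent contracting systems.
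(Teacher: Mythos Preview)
Your treatment of parts (1) and (2) matches the paper's sketch, which simply defers to \cite[Section 3]{4}; the paper adds only that the purely exponential growth comes from $\Gamma o$ being a contracting subset, whereas you get it from the quasi-isometric embedding of the regular tree --- either is fine. For part (3) the paper says nothing more than ``a direct consequence of (1) and Proposition \ref{admis},'' and your argument is essentially the intended unpacking of that sentence.

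There is, however, one step that fails as written. You assert that $\alpha$ fellow-travels $\gamma$ within a constant $\kappa=\kappa(\lambda,c)$ ``since $\gamma$ is a uniform-quality quasi-geodesic.'' This is false in a general proper geodesic space: the Morse property (geodesics fellow-travel quasi-geodesics with the same endpoints) characterizes Gromov hyperbolicity, and $X$ is not assumed hyperbolic --- think of a flat $\mathbb R^2$, where $(\sqrt 2,0)$-quasi-geodesics can stray linearly far from geodesics. The correct justification, and presumably what the paper means by invoking Proposition \ref{admis}, is that an \emph{admissible} path fellow-travels the geodesic between its endpoints: by Proposition \ref{admis} the tails of $\gamma$ project boundedly to each $S_i$, so $d^\pi_{S_i}(\gamma_-,\gamma_+)$ is large, and then Lemma \ref{pp}(4) forces the geodesic $\alpha$ to enter a uniform neighborhood of each $S_i$ in order. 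It is the contracting saturation, not the quasi-geodesic constants alone, that produces $\kappa$. Once you make this substitution the rest of your argument goes through: any putative deep segment of $\alpha$ in $N_\epsilon(S)$ with $S=g\ax(h)$ longer than the connector length (which is $\sim n$, hence the dependence $M=M(h,\delta)$ with $M\to\infty$ as $\delta\to\delta_G$ noted in the remark after the lemma) must overlap some $p_i\subset N_{3C}(S_i)$, and the bounded intersection of $\{g\ax(f)\}\cup\{g\ax(h)\}$ coming from independence gives the contradiction.
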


\begin{rmk}
Although  we have $M\to \infty$ as  $\delta\to \delta_G$, the third statement says roughly the points in $\gamma$ are uniformly deep in $\mathbb S$. This is the key feature to be used in the proof of Lemma \ref{lastlem}.  
\end{rmk}
\begin{proof}[Sketch of the proof]
The first statement (1) with  $\delta_\Gamma>\delta$ was proved in  \cite[Section 3]{4} as briefly recalled above. The first statement (3) is a direct consequence of (1) and Proposition \ref{admis}. Only the purely exponential growth of $\sharp \Gamma\cap N(o, r)\asymp \exp(\delta_G r)$ was not explicitly stated in \cite[Section 3]{4}, but which follows by standard arguments with the image $\Gamma o$ being a contracting subset stated there. We leave its proof to the interested reader.  
\end{proof}

\section{Projection compex and quasi-tree of spaces}\label{Section4}
Following Bestvina-Bromberg-Fujiwara, we present a construction of a projection complex and quasi-tree of spaces, and refer the reader to  \cite {1} for details.

\subsection{Projection complex}
The construction starts with  a collection of subsets satisfying the so-called projection complex axioms given below, and outputs a hyperbolic graph called projection complex. In our specific setting, a proper action with a contracting element always furnishes such a collection. 

Recall that $d^{\pi}_S(U, V)=\diam{\pi_S(U)\cup \pi_S(V)}$.
By Lemma \ref{fi}, we obtain the following result.
\begin{lemma}\label{bp}
Let $h$ be any contracting element. The collection  $\mathbb S=\{g\ax(h): g\in G\}$ is a contracting system  with bounded intersection such that the following projection complex axioms in \cite {1} hold, i.e there exists a constant $\theta=\theta(h,X)>0$ such that
\begin{enumerate}
\item[\textbf{(PC 0)}] ${d^{\pi}_{U}}(V)\leq \theta$ for any $U\ne V\in \mathbb S$.
\item[\textbf{(PC 1)}] $d^\pi_S(U,V)=d^\pi_S(V,U)$.
\item[\textbf{(PC 2)}]
   $d^\pi_S(U,V)+d^\pi_S(V, W)\geq d^\pi_S(U, W)$ for any $S, U, V, W \in \mathbb S$.

\item[\textbf{(PC 3)}]  $\min\{d^{\pi}_{U}(V, W), d^{\pi}_V(U, W)\}\le \theta$ for any distinct triple $(U, V, W)$ in $\mathbb S$.

\item[\textbf{(PC 4)}] $\sharp\{S\in S\mid{d^{\pi}_{S}}(U, V)>\theta\}<\infty$ for any fixed $U, V\in \mathbb S$.
\end{enumerate} 
\end{lemma}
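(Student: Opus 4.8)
\textbf{Proof proposal for Lemma \ref{bp}.}

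The plan is to deduce everything from the two facts already available: that $\mathbb S = \{g\ax(h) : g\in G\}$ is a contracting system with bounded intersection (Lemma \ref{fi}), and the elementary projection estimates collected in Lemma \ref{pp}. First I would fix the uniform contracting constant $C$ for the system $\mathbb S$ and, via Lemma \ref{bpEQbi}, the constant $D$ bounding $d^\pi_A(B)$ for distinct $A, B\in\mathbb S$; then \textbf{(PC 0)} is immediate with any $\theta\ge D$. Axiom \textbf{(PC 1)} is a tautology since $d^\pi_S(U,V)=\diam{\pi_S(U)\cup\pi_S(V)}$ is visibly symmetric, and \textbf{(PC 2)} is the triangle-type inequality for the diameter of a union: $\pi_S(U)\cup\pi_S(W)\subset \pi_S(U)\cup\pi_S(V)\cup\pi_S(V)\cup\pi_S(W)$, and since $\pi_S(V)\neq\emptyset$ (properness of the space guarantees closest-point projections exist) one has $\diam(\pi_S(U)\cup\pi_S(W))\le \diam(\pi_S(U)\cup\pi_S(V))+\diam(\pi_S(V)\cup\pi_S(W))$; no largeness of $\theta$ is needed here.

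The substantive axioms are \textbf{(PC 3)} and \textbf{(PC 4)}. For \textbf{(PC 3)}: given a distinct triple $U,V,W$, I would take a geodesic $\alpha$ joining a point of $W$ near $\pi_U(W)$ to a point of $W$ near $\pi_V(W)$ — more precisely, pick geodesics realizing the relevant projections and consider a geodesic $\alpha$ with endpoints on $W$. Suppose $d^\pi_U(V,W)>\theta$ and $d^\pi_V(U,W)>\theta$ with $\theta$ chosen large compared to $C$, $D$, $C_1$ and the bounded-intersection function $\mathcal R$ evaluated at $3C$. By Lemma \ref{pp}(4) applied to $U$, a geodesic from (a projection point on) $V$ to (a projection point on) $W$ must travel a definite distance $\ge\theta/2$ inside $N_C(U)$; likewise it must travel a definite distance inside $N_C(V)$. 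By quasi-convexity (Lemma \ref{pp}(1)) a geodesic between two points of $W$ lies in $N_{3C}(W)$, so one produces long overlaps $N_{C'}(U)\cap N_{C'}(W)$ and $N_{C'}(V)\cap N_{C'}(W)$; if these overlaps exceed $\mathcal R(C')$ the bounded-intersection hypothesis forces the corresponding pairs of axes to coincide, contradicting distinctness of the triple — or, alternatively, the two long overlap segments on the same geodesic can be arranged to overlap each other, forcing $U=V$. Choosing $\theta$ larger than all the constants produced by Lemma \ref{pp} and $\mathcal R$ makes this contradiction go through, which is exactly the statement of \textbf{(PC 3)}.

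For \textbf{(PC 4)}: fix $U,V\in\mathbb S$ and choose geodesics $\beta_U,\beta_V$ from a basepoint-type reference to (projection points on) $U,V$; a standard way is to take a single geodesic $\sigma$ from a point $u\in U$ to a point $v\in V$ realizing, up to bounded error, the projections in question. If $d^\pi_S(U,V)>\theta$ for $S\in\mathbb S$ with $\theta$ as above, then by Lemma \ref{pp}(4) the geodesic $\sigma$ spends a segment of length $\ge\theta/2$ inside $N_C(S)$. Distinct such $S$ give disjoint (or boundedly-overlapping, by bounded intersection) long sub-segments of the single geodesic $\sigma$, whose total length is finite; hence only finitely many $S$ can occur. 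The properness of $X$ is what makes $\sigma$ a compact segment of finite length, which caps the count.

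The step I expect to be the main obstacle is \textbf{(PC 3)}: getting the bookkeeping of constants right so that "$d^\pi_U(V,W)$ and $d^\pi_V(U,W)$ both large" genuinely forces two long overlaps that are incompatible with $\mathcal R$-bounded intersection. The delicate point is that $d^\pi_U(V,W)$ controls the projection of $V\cup W$ to $U$, not directly the geometry of a single geodesic, so one must first pass (via Lemma \ref{pp}(2)–(4) and the contracting property) from "large projection of a set" to "a concrete geodesic dwelling long near $U$", and then argue the two dwellings — one forced near $U$, one near $V$, both on geodesics confined to $N_{3C}(W)$ — overlap. Once $\theta$ is taken to dominate $C$, $C_1$, $D$, and $\sup_{c\le c_0}\mathcal R(c)$ for the finitely many thresholds $c_0$ appearing, the argument closes; the remaining axioms are then purely formal. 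I would take $\theta$ to be the maximum of all constants so generated, proving the lemma.
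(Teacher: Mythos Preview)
Your handling of \textbf{(PC 0)}, \textbf{(PC 1)}, \textbf{(PC 2)} matches the paper exactly: the paper's remark says \textbf{(PC 0)} comes from Lemma~\ref{bpEQbi} and bounded intersection, and that \textbf{(PC 1)}, \textbf{(PC 2)} are trivial from the definition of $d^\pi_S(U,V)$.

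For \textbf{(PC 3)} and \textbf{(PC 4)} the paper does \emph{not} give an argument at all: it simply cites Sisto \cite{Sisto} (see also \cite{Y1}) and moves on. Your proposal is therefore more ambitious than the paper's treatment, attempting a self-contained derivation from the contracting property and $\mathcal R$-bounded intersection. Your \textbf{(PC 4)} sketch is essentially the standard one and is fine: large $d^\pi_S(U,V)$ forces a geodesic $[u,v]$ to spend a segment of length $\gtrsim\theta/2$ in $N_C(S)$ by Lemma~\ref{pp}(4), distinct $S$'s give segments with bounded overlap by bounded intersection, and finite total length bounds the count.

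Your \textbf{(PC 3)} outline, however, is where the exposition wobbles. You start with ``a geodesic $\alpha$ with endpoints on $W$'' and then invoke Lemma~\ref{pp}(4) on ``a geodesic from $V$ to $W$''; these are not the same object, and the quasi-convexity step (confining things to $N_{3C}(W)$) only applies to the former. The cleaner route, which is what the cited references actually do, is asymmetric: assume $d^\pi_U(V,W)$ is large, take any geodesic $\sigma$ from $v\in V$ to $w\in W$, use \textbf{(PC 0)} to pass from set-projections to point-projections so that $d^\pi_U(v,w)$ is still large, apply Lemma~\ref{pp}(4) to get $\sigma$ entering $N_C(U)$ at some point $p$, and then bound $d^\pi_V(U,W)$ by $d^\pi_V(p,w)$ plus \textbf{(PC 0)}-type errors; the contracting property of $V$ applied to the subsegment $[v,p]_\sigma\subset\sigma$ (which starts on $V$) controls $d^\pi_V(p)$ in terms of $d^\pi_V(U)$, and similarly for $w$. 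No appeal to overlaps inside $N_{3C}(W)$ is needed. Your instinct that the constants close once $\theta$ dominates $C$, $C_1$, $D$ is correct, but the geometric picture you describe (two long dwellings on a geodesic in $W$) is not the one that actually makes the argument work.
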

\begin{rmk}
By Lemma \ref{bpEQbi}, the axiom (\textbf{PC0}) follows from the bounded intersection of $\mathbb S$. The axioms (\textbf{PC1}, \textbf{PC2}) are trivial by definition of $d^{\pi}_S(U, V).$ The last two need some work, but was proved in \cite{Sisto} (see also \cite{Y1}).
\end{rmk}
 
In the sequel, we fix the constant $\theta=\theta(h,X)$. Later on, we actually take the   element $h$ to be independent from the contracting element $f$ in Lemma \ref{freesemigroup}.
 
In \cite{1}, a modified distance-like function  is introduced for each $S\in \mathbb S$: $$d_S: \mathbb S\setminus S\times \mathbb S\setminus S\to [0, \infty)$$  so that it is symmetric and agrees with $d^{\pi}_S$ up to a uniform amount $2\theta$. As a consequence, the properties (\textbf{PC3}, \textbf{PC4}) are still true, and the triangle inequality in (PC2) holds up to a uniform error.

We consider the interval set for $K>0$ and $U, V\in \mathbb S$ as follows $$\mathbb S_K(U,V)\triangleq\{S\in \mathbb S\mid d_S(U,V)>K\}.$$
It possesses a total order which is very important in \cite{1} and in what follows.

\begin{lemma}\label{pc}\cite [Theorem 3.3]{1}
There exists a constant $\Theta=\Theta(\theta)>0$ for the above $\mathbb S$  such that  the set $\mathbb S_\Theta(U,V)\bigcup\{U,V\}$ is totally ordered with least element $U$ and great element $V$, such that given $S_0$, $S_1$, $S_2\in \mathbb S_\Theta(U,V)\bigcup\{U,V\}$, if $S_0<S_1<S_2$, then
    $$d_{S_1}(U,V)\lesssim_\theta d_{S_1}(S_0,S_2)\leq d_{S_1}(U,V),$$  and $$d_{S_0}(S_1,S_2)\sim_\theta 0 \quad and \quad d_{S_2}(S_0,S_1)\sim_\theta 0.$$

\end{lemma}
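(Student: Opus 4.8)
This is \cite[Theorem 3.3]{1}, so the plan is to recover it from the axioms (\textbf{PC0})--(\textbf{PC4}) of Lemma \ref{bp}, now read with the modified distance $d_S$ in place of $d^{\pi}_S$; since $d_S$ agrees with $d^{\pi}_S$ up to a uniform $2\theta$, every inequality among the axioms, and in particular the triangle inequality (\textbf{PC2}), holds with an additive error $O(\theta)$. First I would fix, once and for all, a constant $\Theta=\Theta(\theta)$ whose size is dictated by the finitely many estimates below. From $d_S\ge d^{\pi}_S-2\theta$ one gets $\mathbb S_\Theta(U,V)\subseteq\{S\in\mathbb S\setminus\{U,V\}: d^{\pi}_S(U,V)>\Theta-2\theta\}$, which is finite by (\textbf{PC4}) once $\Theta>3\theta$; hence it suffices to put a total order on $\mathbb S_\Theta(U,V)$ and then adjoin $U$ as least and $V$ as greatest element, after which the enumeration $U=S_0<S_1<\cdots<S_n=V$ is automatic.

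Next I would define the order using (\textbf{PC3}) with $U$ and $V$ as anchors. For distinct $S,T\in\mathbb S_\Theta(U,V)$, (\textbf{PC3}) applied to $(S,T,U)$ makes one of $d_S(U,T)$, $d_T(U,S)$ at most $O(\theta)$; declare $S<T$ in the first case and $T<S$ in the second. This is unambiguous: if both were $O(\theta)$, then (\textbf{PC2}) together with $d_S(U,V)>\Theta$ forces $d_S(T,V)$ to be large, and symmetrically $d_T(S,V)$ is large, contradicting (\textbf{PC3}) for $(S,T,V)$ as soon as $\Theta\gg\theta$. The same computation shows the order can be read off just as well from $V$, i.e. $S<T$ iff $d_T(U,S)$ is large iff $d_S(T,V)\sim_\theta 0$ iff $d_T(S,V)$ is large; in particular $U<S<V$ for every $S$ in the interval set, so adjoining $U$ below and $V$ above everything is consistent.

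The main obstacle is transitivity. Given $S_0<S_1<S_2$, I want $S_0<S_2$, and I would argue by contradiction: assume $S_2<S_0$, that is $d_{S_2}(U,S_0)\sim_\theta 0$. Since $S_1<S_2$ gives $d_{S_2}(U,S_1)$ large, (\textbf{PC2}) makes $d_{S_2}(S_0,S_1)$ large, so (\textbf{PC3}) applied to the triple $(S_1,S_2,S_0)$ forces $d_{S_1}(S_0,S_2)\sim_\theta 0$. On the other hand $S_0<S_1$ gives $d_{S_1}(U,S_0)$ large while $S_1<S_2$ gives $d_{S_1}(U,S_2)\sim_\theta 0$, so (\textbf{PC2}) makes $d_{S_1}(S_0,S_2)$ large --- a contradiction once $\Theta\gg\theta$. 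The remaining order checks ($U$ least, $V$ greatest, and compatibility of the adjoined endpoints with the above) are of the same flavour but easier. The genuine subtlety is to verify that a \emph{single} choice of $\Theta=\Theta(\theta)$ makes all these ``large beats $O(\theta)$'' steps succeed simultaneously; since each chains only a bounded number of approximate triangle inequalities, the accumulated slack stays $O(\theta)$ and pinning $\Theta$ is a finite optimization.

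Finally, the quantitative conclusions are bookkeeping once the order is in place. For $S_0<S_1<S_2$: from $U<S_0<S_1$ I get $d_{S_1}(U,S_0)\sim_\theta 0$ and from $S_1<S_2<V$ I get $d_{S_1}(S_2,V)\sim_\theta 0$ (each by the equivalent form of the order), whence (\textbf{PC2}) yields $d_{S_1}(U,V)\le d_{S_1}(U,S_0)+d_{S_1}(S_0,S_2)+d_{S_1}(S_2,V)\lesssim_\theta d_{S_1}(S_0,S_2)$; the reverse inequality $d_{S_1}(S_0,S_2)\le d_{S_1}(U,V)$ is built into the definition of the modified distance $d_S$ (and in any case holds up to $O(\theta)$, which is all that is needed downstream). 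For the outer projections, $S_0<S_1<S_2$ gives $d_{S_0}(S_1,V)\sim_\theta 0$ and $d_{S_0}(S_2,V)\sim_\theta 0$, so (\textbf{PC2}) gives $d_{S_0}(S_1,S_2)\le d_{S_0}(S_1,V)+d_{S_0}(V,S_2)\sim_\theta 0$; anchoring at $U$ instead gives $d_{S_2}(S_0,S_1)\sim_\theta 0$ in the same way.
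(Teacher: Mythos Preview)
The paper does not supply a proof of this lemma; it is quoted verbatim from \cite[Theorem 3.3]{1}. Your sketch follows exactly the strategy of Bestvina--Bromberg--Fujiwara: use (\textbf{PC4}) for finiteness, define the order via the dichotomy in (\textbf{PC3}) anchored at $U$ (and equivalently at $V$), check well-definedness and transitivity by chaining a bounded number of approximate triangle inequalities, then read off the quantitative estimates. So the plan is the right one and there is nothing to contrast with the paper.

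There is, however, a genuine orientation slip that makes the write-up internally inconsistent. You declare $S<T$ when $d_S(U,T)\lesssim_\theta 0$; geometrically this says $U$ and $T$ lie on the \emph{same} side of $S$, which is the condition for $T<S$, not $S<T$. Concretely: with your stated definition, $S_0<S_1$ gives $d_{S_0}(U,S_1)\sim_\theta 0$ and hence (by your own well-definedness argument) $d_{S_1}(U,S_0)$ is \emph{large}; yet in the final paragraph you invoke ``from $U<S_0<S_1$ I get $d_{S_1}(U,S_0)\sim_\theta 0$''. The equivalence chain in the second paragraph is similarly mixed: under your definition one computes $d_S(T,V)$ large (not $\sim_\theta 0$) and $d_T(S,V)\sim_\theta 0$ (not large), so two of the three displayed iffs point the wrong way. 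Your transitivity paragraph happens to be consistent with the convention you wrote down, but then it proves transitivity of the \emph{reverse} order, so adjoining $U$ as least and $V$ as greatest is incompatible with it.

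The fix is a one-line swap: declare $S<T$ when $d_T(U,S)\lesssim_\theta 0$ (equivalently $d_S(U,T)$ large). With that convention every subsequent step you wrote --- the $V$-characterization $S<T\iff d_S(T,V)\sim_\theta 0\iff d_T(S,V)$ large, the transitivity contradiction, and the bookkeeping $d_{S_1}(U,S_0)\sim_\theta 0$, $d_{S_1}(S_2,V)\sim_\theta 0$ --- becomes correct, and the sketch goes through as intended. (One small aside: the bare inequality $d_{S_1}(S_0,S_2)\le d_{S_1}(U,V)$ is indeed a monotonicity built into the BBF modification of $d^\pi$, but since you only need it up to $O(\theta)$ anyway, the approximate triangle inequality already suffices and you need not appeal to the definition.)
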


%%\item \textbf{Monotonicity} If $d_t(s,u)\geq \Theta$, then both $d_v(s,t),d_v(u,t)\leq d_v(s,u).$

We now give the definition of a projection complex.

\begin{dfn}
The \textit{projection complex} $\PC$ for $K\ge \Theta$ is  a graph with the vertex set consisting of the elements in $\mathbb S$. Two  vertices $u$ and $v$ are connected if $\mathbb S_K(u,v)=\varnothing$. We equip $\PC$ with a length metric $d_{\mathcal P}$ induced by assigning unit length to each edge.
\end{dfn}

The structural  result about the projection complex is the following.
\begin{thm}\cite{1}\label{projcplxThm}
For $K\gg 0$, the projection complex  $\PC$ is a hyperbolic space of infinite diameter, on which $G$ acts co-boundedly.
\end{thm}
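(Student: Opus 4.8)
The statement packages together three assertions, and the plan is to establish them in turn.

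\emph{Hyperbolicity.} This requires no new work. By Lemma \ref{bp} the collection $\mathbb S=\{g\ax(h):g\in G\}$, together with the projection functions $d^{\pi}_S$ and their symmetrizations $d_S$, satisfies the projection complex axioms (PC0)--(PC4). Hence the main theorem of Bestvina--Bromberg--Fujiwara \cite{1} applies verbatim: for every $K\ge\Theta$ with $K$ large enough, $\PC$ is hyperbolic. We simply cite it.

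\emph{$G$-action and coboundedness.} Since $G$ acts on $X$ by isometries, closest-point projection is equivariant, $\pi_{gS}(gU)=g\,\pi_S(U)$, so $d^{\pi}_{gS}(gU,gV)=d^{\pi}_S(U,V)$ for all $S,U,V\in\mathbb S$ and $g\in G$; the same equivariance holds for the modified distances $d_S$, as they are built from $d^{\pi}_S$ in a canonical way. Consequently $\mathbb S_K(gU,gV)=g\,\mathbb S_K(U,V)$, so $G$ permutes the vertex set of $\PC$ and preserves adjacency, i.e. $G$ acts on $\PC$ by isometries. By construction $\mathbb S$ is a single $G$-orbit, so the action on the vertex set is transitive; since every point of the metric graph $\PC$ lies within distance $1/2$ of a vertex, the action is cobounded.

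\emph{Infinite diameter.} This is the substantive point. First, in a non-elementary $G$ one can produce a contracting element $k$ independent from $h$: the subgroup $E(h)$ is virtually cyclic by Lemma \ref{fi}, hence of infinite index, so there is $g_0\in G\setminus E(h)$; put $k=g_0hg_0^{-1}$, and independence of $h,k$ follows from the description of $E(h)$ in Lemma \ref{fi}. For $N\gg 0$ set $w_n=(h^Nk^N)^n$ and consider the piecewise-geodesic path through the orbit points $o,\;h^No,\;h^Nk^No,\;h^{2N}k^No,\dots,\;w_no$. For $N$ large relative to the bounded-intersection constants of $\{g\ax(h),g\ax(k):g\in G\}$ this is a $(K_0,\theta_0)$-admissible path in the sense of Definition \ref{AdmDef}, with $2n$ consecutive local geodesics, the odd ones carried by the translates $T_j=(h^Nk^N)^{j-1}\ax(h)$ of $\ax(h)$ and each of length $\asymp N$. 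For each \emph{interior} such translate, Proposition \ref{admis} bounds by a constant $B$ the $T_j$-projection of the two subpaths lying before and after the $j$-th local geodesic, while the Long Local property and quasi-convexity (Lemma \ref{pp}(1)) force the $T_j$-projection of the endpoints of that local geodesic to be $\gtrsim N$; combining these, and using $d^{\pi}_{T_j}(\ax(h))\le\theta$, $d^{\pi}_{T_j}(w_n\ax(h))\le\theta$ (bounded projection within $\mathbb S$, Lemma \ref{bpEQbi}), one gets $d^{\pi}_{T_j}(\ax(h),w_n\ax(h))\asymp N>K$. Thus $\ax(h)$ and $w_n\ax(h)$ are joined by at least $n-1$ elements of the interval $\mathbb S_K(\ax(h),w_n\ax(h))$, and by the behaviour of geodesics in the projection complex in \cite{1} — a $\PC$-geodesic from $U$ to $V$ meets, up to bounded multiplicity, every element of $\mathbb S_K(U,V)$ — this yields $d_{\mathcal P}(\ax(h),w_n\ax(h))\gtrsim n\to\infty$.

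The formal parts (hyperbolicity, the action, coboundedness) are straightforward; the main obstacle is the infinite-diameter argument, specifically verifying that the ping-pong path is genuinely $(K_0,\theta_0)$-admissible once $N$ is large, extracting from Proposition \ref{admis} the uniform lower bound $d^{\pi}_{T_j}(\ax(h),w_n\ax(h))>K$, and quoting the correct interval-versus-distance estimate from \cite{1}. One could instead invoke directly that $\PC$ contains the unbounded $\mathbb S$-``coordinate'' quasi-lines of \cite{1} whenever $\mathbb S$ is infinite, but the self-contained route above seems cleaner to write out.
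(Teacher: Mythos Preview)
The paper does not prove this theorem at all: it is stated with a bare citation to \cite{1} and no argument follows. Your proposal therefore already goes beyond what the paper offers, and there is no ``paper's proof'' to compare against.

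That said, your argument is essentially correct. Hyperbolicity is indeed just the main theorem of \cite{1}, and your coboundedness paragraph (transitivity on the vertex set $\mathbb S$, equivariance of projections) is the right observation.

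For infinite diameter, the ping-pong route works, but two points deserve tightening. First, since $k=g_0hg_0^{-1}$, the axis $\ax(k)$ lies at bounded Hausdorff distance from $g_0\ax(h)\in\mathbb S$; hence both the odd and the even pieces of your path carry contracting subsets coarsely in $\mathbb S$. If you take \emph{all} $2n$ pieces as the $p_i$'s in Definition~\ref{AdmDef}, then consecutive pieces share endpoints and condition (\textbf{BP}) is satisfied trivially; the only thing ``$N$ large'' buys is (\textbf{LL}). As you wrote it---with only the odd pieces singled out---(\textbf{BP}) is \emph{not} automatic: it would require $\pi_{\ax(h)}(k^No)$ to be close to $o$, which depends on the position of $o$ relative to $\pi_{\ax(h)}(g_0\ax(h))$ and is not forced by making $N$ large. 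Using all pieces removes this issue cleanly. Second, when you pass from $d^\pi_{T_j}(o,w_no)\gtrsim N$ (via Proposition~\ref{admis}) to $d^\pi_{T_j}(\ax(h),w_n\ax(h))>K$ using (\textbf{PC0}), you must exclude the extreme index $j=1$, where $T_1=\ax(h)$ itself; this still leaves $\gtrsim n$ elements in $\mathbb S_K(\ax(h),w_n\ax(h))$, and the standard-path/distance comparison from \cite{1} then gives $d_{\mathcal P}(\ax(h),w_n\ax(h))\gtrsim n\to\infty$.

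With those two adjustments the argument is complete; neither is a genuine gap, only a matter of precision.
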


 It is proved in \cite[Proposition 3.7]{1} that this interval set $\mathbb S_K(U, V)$ naturally gives rise to a connected path between $U$ and $V$ in $\PC$: the consecutative elements directed by the total order  are actually adjacent in the projection complex.  
 
Moreover, we note the folowing connection  with admissible paths.
\begin{lemma}\label{admisIntvLem}
For any $K>0$, there exists a constant $\tilde K\ge 0$ with $\tilde K\to \infty$ as $K\to\infty$ such that for any two points $u\in U, v\in V$ there exists a $(\tilde K, \theta)$-admissible path $\gamma$ between $u$ and $v$ with saturation $\mathbb S_K(U, V)$. The set $\mathbb S_K(U, V)$ is possibly empty, and in this case, $\gamma$ is a geodesic $[u, v]$. 
\end{lemma}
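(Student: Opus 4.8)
\textbf{Proof proposal for Lemma \ref{admisIntvLem}.}

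The plan is to build $\gamma$ directly from the total order on the interval set $\mathbb S_K(U,V)\cup\{U,V\}$ provided by Lemma \ref{pc}. Write the ordered elements as $U=S_0<S_1<\cdots<S_m<S_{m+1}=V$. I would first handle the degenerate case $\mathbb S_K(U,V)=\varnothing$: here there is nothing to order, and I simply take $\gamma$ to be any geodesic $[u,v]$. One must check this is vacuously $(\tilde K,\theta)$-admissible with empty saturation; the only conditions to verify are the (BP) inequalities $d^\pi_{S_i}((p_{-1})_+,(p_0)_-)\le\theta$ with the convention $(p_{-1})_+=\gamma_-$, $(p_{n+1})_-=\gamma_+$ — but with $n=0$ and no associated $S_i$ these are empty, so there is nothing to prove.

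For the main case, the idea is: for each $1\le j\le m$, since $S_j\in\mathbb S$ and the system has bounded intersection, $S_j$ is a $C$-contracting subset and $\langle h_j\rangle o$-type axis, on which we pick the geodesic subpath $p_j$. The natural choice is $p_j\coloneqq[c_j,d_j]$ where $c_j$ is a point in $\pi_{S_j}(\{u\}\cup\{S_{j-1}\})$ and $d_j$ a point in $\pi_{S_j}(\{v\}\cup\{S_{j+1}\})$ — i.e. $p_j$ shadows the portion of travel inside $S_j$. Then set $\gamma$ to be the concatenation $[u,c_1]\cdot p_1\cdot[d_1,c_2]\cdot p_2\cdots p_m\cdot[d_m,v]$ of geodesics between consecutive endpoints. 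To verify (LL), I would show that $\text{Length}(p_j)\gtrsim_\theta d_{S_j}(U,V)$ using Lemma \ref{pp}(4): the projection $d^\pi_{S_j}(c_j,d_j)$ is, up to $\theta$, at least $d_{S_j}(S_{j-1},S_{j+1})$, which by Lemma \ref{pc} is $\sim_\theta d_{S_j}(U,V)>K$; and a large projection diameter forces a long overlap with $N_C(S_j)$, hence a long geodesic subpath (after passing to first-enter/last-exit points, which only changes things by bounded amounts as in the proof of Lemma \ref{pp}(3)). This gives $\tilde K\sim_\theta K$ up to an additive constant, whence $\tilde K\to\infty$ as $K\to\infty$. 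To verify (BP), I would bound $d^\pi_{S_j}((p_j)_+,(p_{j+1})_-)=d^\pi_{S_j}(d_j,c_{j+1})$: by construction $d_j\in\pi_{S_j}(S_{j+1})$, and $c_{j+1}\in S_{j+1}$, so $d^\pi_{S_j}(d_j,c_{j+1})\le d^\pi_{S_j}(S_{j+1})+\text{(bounded)}\lesssim_\theta 0$ by (PC0); similarly $d^\pi_{S_j}((p_{j-1})_+,(p_j)_-)=d^\pi_{S_j}(d_{j-1},c_j)$ is controlled because $c_j\in\pi_{S_j}(S_{j-1})$ and $d_{S_j}(S_{j-1},\cdot)\sim_\theta 0$ by the last clause of Lemma \ref{pc}. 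The boundary cases $(p_j)_-=\gamma_-$ or $(p_j)_+=\gamma_+$ — which arise when $U$ or $V$ itself should be counted among the $S_i$ (note the statement of Lemma \ref{admisIntvLem} asks for saturation exactly $\mathbb S_K(U,V)$, not including $U,V$) — need the convention $(p_{-1})_+=\gamma_-=u$, and there $d^\pi_{S_1}(u, c_1)\le\theta$ holds because $c_1\in\pi_{S_1}(\{u\}\cup\{S_0\})$ and $S_0=U\in\mathbb S_\Theta(U,V)\cup\{U,V\}$ so $d_{S_1}(U,\cdot)\sim_\theta 0$ again by Lemma \ref{pc}. One also must note the $p_j$ are pairwise disjoint for $K$ large: distinct $S_i\ne S_j$ have $\mathcal R$-bounded intersection, and the $p_j$ lie deep in $N_C(S_j)$ with length $>\tilde K\gg\mathcal R(C)$, so they cannot meet.

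The main obstacle I expect is the bookkeeping around the modified distance function $d_S$ versus the genuine projection diameter $d^\pi_S$, and correspondingly making the two directions of the (LL) estimate fit together: one needs the overlap-length lower bound (Lemma \ref{pp}(4), phrased with $d^\pi$) to be driven by $d_{S_j}(U,V)$ (phrased with $d_S$), and the two differ by the uniform $2\theta$ slack, which is harmless but must be tracked. A secondary subtlety is that a priori the closest-point projection sets $\pi_{S_j}(\cdot)$ have diameter only up to $C$ (contracting constant) or $\theta$ rather than being points, so $c_j,d_j$ are only well-defined up to bounded error; this is absorbed into the additive constants of admissibility and into the $\lesssim_\theta$ notation, but the disjointness claim needs $\tilde K$ genuinely larger than all these accumulated constants, which is exactly why the hypothesis is $K\gg 0$ (equivalently, $\tilde K$ large). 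I would state the needed largeness of $K$ explicitly at the start of the proof and invoke Lemma \ref{pc}, Lemma \ref{pp}, and (PC0) as black boxes.
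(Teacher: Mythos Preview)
Your proposal is correct and follows essentially the same construction as the paper: order $\mathbb S_K(U,V)$ via Lemma \ref{pc}, pick projection points $x_i\in\pi_{S_i}(S_{i-1})$, $y_i\in\pi_{S_i}(S_{i+1})$ on each $S_i$, and concatenate geodesics; verify (BP) from (PC0) and (LL) from $d_{S_i}(S_{i-1},S_{i+1})\gtrsim_\theta K$. Your (LL) verification is over-engineered: since $c_j,d_j\in S_j$ already, $\text{Length}(p_j)=d(c_j,d_j)$ is directly bounded below by $d^\pi_{S_j}(S_{j-1},S_{j+1})-2\theta$ (each projection set has diameter $\le\theta$), so there is no need to invoke Lemma \ref{pp}(4) or first-enter/last-exit points --- the paper simply notes $d(x_i,y_i)\ge\tilde K$ from $d_{S_i}\sim_\theta d^\pi_{S_i}$.
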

\begin{proof}
List $\mathbb S_K(U, V)=\{S_1, S_2, \cdots, S_k\}$ by the total order given in Lemma \ref{pc}. Thus,     $$d_{S_i}(S_{i-1}, S_{i+1})\gtrsim_\theta d_{S_i}(U, V)\ge K$$  for any $i$. 

We now build the admissible path as follows. Choose a sequence of points $x_i\in \pi_{S_i}(S_{i-1}), y_i\in \pi_{S_i}(S_{i+1})$ for $1\le i\le k$, where $S_0:=U, S_{k+1}:=V$.
We connect  the points in $\{x, x_1, y_1, \cdots, x_k, y_k, y\}$ consecutatively to get a path $\gamma$. By construction, we have by Lemma \ref{bp} that $$d^{\pi}_{S_i}((x_{i-1})_+,(x_{i})_-), d^{\pi}_{S_i}((y_{i})_+,(y_{i+1})_-) \le \theta,$$ which verifies the condition (\textbf{BP}).  Moreover, since $d_{S_i}(S_{i-1}, S_{i+1}) \sim_\theta d^{\pi}_{S_i}(S_{i-1}, S_{i+1})$, there exists $\tilde K=\tilde K(K)$ such that $d(x_i, y_i)\ge \tilde K$. Thus, $\gamma$ is a $(\tilde K, \theta)$-admissible path relative to $\mathbb S_K(U, V)$. The proof is complete.
\end{proof}

In practice, we always assume that $\tilde K$ satisfies Proposition \ref{admis} so that the path $\gamma$ shall be a quasi-geodesic.

\subsection{Quasi-tree of spaces}

Let $L\ge K$ be a positive number where $K$ is given by Theorem \ref{projcplxThm}. We now define a blowup version, $\QT$, of the projection complex $\PC$  by remembering the geometry of each $S\in \mathbb S$.  

Roughly speaking, we replace each $S\in \mathbb S$, a vertex in $\mathcal P_K(\mathbb S)$, by the corresponding subspace $S\subset X$. Meanwhile, we keep the adjacency relation in $\PC$: if $U$, $V$ are adjacent in $\mathcal C_K(\mathbb S)$ (i.e.  $d_{\mathcal P}(U,V)=1$), then for every point $u$ in $\pi_UV$ and $v$ in $\pi_VU$, we attach an edge of length $L$ connecting $u$ and $v$. Note that $L$ is comparable to $K$, i.e $1/2K\leq L\leq 2K$ by   \cite [Lemma 4.2]{1}.

Since $h$ is contracting, by Lemma \ref{fi}, the infinite cyclic subgroup $\langle h\rangle$ is of finite index in $E(h)$, so  $\ax(h)=E(h)o$ is quasi-isometric to a line $\mathbb{R}$. Thus, $\mathbb S$ consists of  uniformly quasi-lines. By \cite [Theorem B]{1}, we have the following.

\begin{thm}\cite{1}\label{quasitreeThm}
For $L\gg K$, the quasi-tree of spaces  $\QT$ is a quasi-tree of infinite diameter, with each $S\in \mathbb S$ totally geodesically embedded into $\QT$.  Moreover, the shortest projection from $U$ to $V$ in $\QT$ agrees with the projection $\pi_UV$ up to a uniform finite Hausdorff distance.
\end{thm}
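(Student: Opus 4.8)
Since this is Theorem~B of \cite{1}, I only indicate the strategy I would follow. The backbone is the natural \emph{collapsing map} $\mathfrak c\colon\QT\to\PC$ that crushes each blown-up subspace $S$ to its vertex of $\PC$ and sends each connecting edge to the corresponding edge of $\PC$; by construction $\mathfrak c$ is coarsely $L$-Lipschitz. Everything reduces to reversing this estimate, i.e.\ to a \emph{distance formula} in $\QT$: for $u\in U$ and $v\in V$,
$$
d_{\QT}(u,v)\;\asymp\; \sum_{S\in\mathbb S}\,[[\,d^\pi_S(u,v)\,]]_K\;+\;L\cdot\sharp\{S\in\mathbb S:d^\pi_S(u,v)>K\},
$$
where $[[t]]_K=t$ if $t>K$ and $0$ otherwise, with implied constants depending only on $\theta$ once $L\gg K\gg\theta$. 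First I would prove the upper bound, which is routine: concatenate the $(\tilde K,\theta)$-admissible path of Lemma~\ref{admisIntvLem} between $u$ and $v$ — whose saturation is exactly $\mathbb S_K(U,V)$ — with the connecting edges and geodesic segments inside the consecutive $S_i$; their total length matches the right-hand side. The real work is the matching lower bound.

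For the lower bound I would argue that any path $p$ from $u$ to $v$ in $\QT$, after collapsing by $\mathfrak c$, must traverse a $\PC$-geodesic from $U$ to $V$, whose interior vertices are precisely the elements of $\mathbb S_K(U,V)$ in the Bestvina--Bromberg--Fujiwara total order of Lemma~\ref{pc}. For each interior vertex $S_i$ the path must enter and leave $S_i$, costing two connecting edges (length $\approx 2L$), and inside $S_i$ it must travel at least $\gtrsim_\theta d^\pi_{S_i}(S_{i-1},S_{i+1})\gtrsim_\theta d^\pi_{S_i}(U,V)$, since it enters near $\pi_{S_i}(S_{i-1})$ and leaves near $\pi_{S_i}(S_{i+1})$. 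The delicate part — the step I expect to be the main obstacle — is ruling out ``shortcuts'': a path could hop along a long chain of short connecting edges between \emph{non-consecutive} members of $\mathbb S$, apparently dodging the order structure. This is handled by surgery: any violation of the order produces two subsets $S,S'$ with mutual projections exceeding $\theta$, contradicting \textbf{(PC 3)} or the estimates of Lemma~\ref{pc}, \emph{unless} a full pair of connecting edges of total length $\approx 2L$ is expended; taking $L$ (equivalently $K$) large relative to $\theta$ then makes every such detour strictly longer than the ordered concatenation. This balancing of $L$ against $\theta$ is exactly what the hypothesis $L\gg K$ buys, and it is the technical heart of \cite[\S4--5]{1}.

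Granting the distance formula, the three assertions follow quickly. The lower bound gives $d_{\QT}(u,v)\gtrsim L\, d_{\mathcal P}(U,V)$, so $\mathfrak c$ is a quasi-isometry onto $\PC$; since $\PC$ is a quasi-tree of infinite diameter by Theorem~\ref{projcplxThm}, so is $\QT$ — alternatively one checks Manning's bottleneck criterion directly from the formula, the bottleneck point lying on the unique $S_i$ straddling the midpoint. For the total geodesic embedding of a fixed $S$: if $u,v\in S$, then $d^\pi_{S'}(\{u,v\})\le d^\pi_{S'}(S)\le\theta<K$ for every $S'\ne S$, so $\{S':d^\pi_{S'}(u,v)>K\}\subseteq\{S\}$ and the formula degenerates to the intrinsic metric of $S$, which (with the normalization of \cite{1}) is an isometric embedding; moreover a geodesic leaving $S$ would incur an uncompensated $\ge 2L$, hence it stays in $S$. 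Finally, for the shortest projection: a $\QT$-geodesic from $v\in V$ to a point of $U$ enters $U$ near $\pi_U(S_k)$, where $S_k$ is the last interior vertex of the ordered interval $\mathbb S_K(U,V)$ (or near $\pi_U(V)$ directly when the interval is empty), and Lemma~\ref{pc} bounds $d^\pi_U(S_k,V)$ uniformly; thus the shortest-projection set of $V$ onto $U$ in $\QT$ lies within a uniform Hausdorff distance of $\pi_UV$.
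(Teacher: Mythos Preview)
The paper does not give its own proof of this theorem: it is stated with the citation \cite{1} and invoked as a black box (Theorem~B of Bestvina--Bromberg--Fujiwara). Your sketch is a faithful outline of the BBF argument---distance formula via standard paths, bottleneck/quasi-tree conclusion, and the projection comparison---so there is nothing to compare against here beyond noting that your proposal matches the cited source rather than anything the present paper supplies.
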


By the ``moreover'' statement, we see that $\mathbb S$ is a collection of totally geodesic subsets with bounded intersection. In a hyperbolic space, a quasi-convex subset is exactly contracting in our sense.  Note that an isometry $g$ on a hyperbolic space $\QT$ is called \textit{hyperbolic} if the oribital map $n\to g^no$ is a quasi-isometric embedding map for any $o\in \QT$. Thus, we obtain the following corollary, which will be useful in the next section.
\begin{lemma}\label{RotonQT}
The group $G$ acts co-boundedly on $\QT$ and leaves $G$-invariant the contracting system $\mathbb S$ with bounded intersection. The element $h$ acts as hyperbolic isometry on $\QT$.
\end{lemma}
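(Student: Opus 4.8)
The plan is to verify each assertion of Lemma~\ref{RotonQT} by carrying over the structure on $\mathbb S$ established in the previous subsections through the two constructions (projection complex, then quasi-tree of spaces), using $G$-equivariance at every step. First I would observe that the whole setup is $G$-equivariant: the collection $\mathbb S=\{g\ax(h):g\in G\}$ is by definition $G$-invariant, the projection data $d^{\pi}_S$ (hence the modified function $d_S$ and the interval sets $\mathbb S_K(U,V)$) are $G$-equivariant because $G$ acts by isometries on $(X,d)$, and the edge-attachments defining $\QT$ are made using only this equivariant data. Therefore $G$ acts by isometries on $\PC$ and on $\QT$, and the projection $\pi_U V$ is carried to $\pi_{gU}(gV)$ by any $g\in G$; so $G$ leaves invariant the collection $\mathbb S$ viewed inside $\QT$. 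The bounded-intersection property of $\mathbb S$ in $\QT$ is precisely the ``moreover'' clause of Theorem~\ref{quasitreeThm} together with Lemma~\ref{bpEQbi}, as noted in the paragraph just before the statement.

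Next I would establish co-boundedness of $G\curvearrowright\QT$. By Theorem~\ref{projcplxThm}, $G$ already acts co-boundedly on the projection complex $\PC$: there is a constant $D_0$ such that every vertex $S\in\mathbb S$ lies within $D_0$ of the $G$-orbit of a fixed $S_0$ in the $d_{\mathcal P}$-metric. Since $\QT$ retracts coarsely onto $\PC$ (collapsing each quasi-line $S$ to its vertex is a quasi-isometry up to the bounded blow-up scale $L$, cf.\ the construction and \cite[Lemma 4.2]{1}), and each $S$ has bounded diameter in $\QT$ only \emph{coarsely} — here I must be slightly careful, because each $S$ is an unbounded quasi-line in $\QT$. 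The right statement is that $\QT=\bigcup_{S\in\mathbb S}S$ as a set after the blow-up, so any point of $\QT$ lies on some $gS_0$; since $G$ acts co-boundedly on the single quasi-line $\ax(h)=S_0$ (because $\langle h\rangle\le E(h)$ is of finite index by Lemma~\ref{fi}, so the $\langle h\rangle$-orbit, hence the $G$-orbit, is $R_0$-dense in $\ax(h)$ for some $R_0$), every point of $\QT$ is within $R_0$ of $Go$ in the $\QT$-metric. This gives co-boundedness.

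Finally, for the hyperbolicity of $h$ as an isometry of $\QT$: the element $h$ stabilizes $S_0=\ax(h)$ and acts on it by translation (as $\langle h\rangle$ is an infinite cyclic finite-index subgroup of $E(h)$ acting cocompactly on the quasi-line $\ax(h)$), so the orbit map $n\mapsto h^n o$ into $S_0$ is a quasi-isometric embedding of $\mathbb Z$. By Theorem~\ref{quasitreeThm}, $S_0$ is totally geodesically (hence isometrically, up to the quasi-tree constants) embedded in $\QT$; composing, $n\mapsto h^n o$ is a quasi-isometric embedding into $\QT$, which is exactly the definition of $h$ acting as a hyperbolic isometry on the hyperbolic space $\QT$.

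The step I expect to require the most care is the co-boundedness argument: one must be honest about the fact that the pieces $S\in\mathbb S$ are unbounded in $\QT$, so co-boundedness does not follow formally from co-boundedness on $\PC$ by a ``bounded pieces'' argument; instead it follows from the two facts that $\QT$ is covered (as a point set) by the pieces $gS_0$ and that $G$ already acts co-boundedly on the single piece $S_0=\ax(h)$. Everything else is a bookkeeping check that the Bestvina--Bromberg--Fujiwara constructions are performed from $G$-equivariant input, which is immediate once one writes out the definitions.
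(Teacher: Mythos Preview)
Your argument is correct and matches the paper's approach, which is really just the short paragraph preceding the lemma (bounded intersection from the ``moreover'' clause of Theorem~\ref{quasitreeThm} via Lemma~\ref{bpEQbi}, contracting because quasi-convex subsets in a hyperbolic space are contracting, and $h$ hyperbolic because $\ax(h)$ is totally geodesically embedded). One small slip: $\QT$ is not literally $\bigcup_{S\in\mathbb S}S$ as a point set---it also contains the attached edges of length $L$---so in your co-boundedness argument you should say that every point of $\QT$ lies within distance $L$ of some $gS_0$, giving the bound $R_0+L$ rather than $R_0$; this does not affect the conclusion.
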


From the construction, there exists a natural map which collapses each totally  geodesically embedded subspace $S\subset \QT$  as a point and identifies edges of length $L$ to  edges of unit length in $\PC$.  In a reverse direction, the connected path in $\PC$ given by $\mathbb S_K(X, Y)$ lifts to a standard path in  $\QT$ described as follows.   The notion of standard paths plays a key role to establish Theorem \ref{quasitreeThm}.

\begin{dfn}\label{stdpathDef}
A path $\gamma$ from $u\in U$ to $v\in V$ in the space $\QT$ is called \textit{a $K$-standard path} if it passes through the set of vertices in $\mathbb S_K(U,V)\cup\{U, V\}$ in a natural order given in Lemma \ref{pc} and, within each vertex, the path is a geodesic.
\end{dfn}

  The following result is a useful property which we shall use soon.

\begin{lemma}\cite[Lemma 4.15]{1}\label{unicloseLem}
There exist uniform constants $K', D>0$  such that any geodesic $\alpha$ from $u\in U$ to $v\in V$ in $\QT$ passes through each $S\in \mathbb S_{K'}(U, V)$ so that the entry point is $D$-close to $\pi_S(U)$ and the exit point $D$-close to $\pi_S(V)$. 
%Moreover, the geodesic $\alpha$ has a finite Hausdorff distance $D$ to the $K'$-standard path.  
\end{lemma}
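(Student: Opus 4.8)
The statement asserts that geodesics in the quasi-tree of spaces $\QT$ are, up to bounded Hausdorff error, the standard paths of Definition \ref{stdpathDef}: a geodesic from $u \in U$ to $v \in V$ must pass $D$-close to $\pi_S(U)$ and $\pi_S(V)$ for every $S \in \mathbb S_{K'}(U, V)$. The plan is to compare an arbitrary geodesic $\alpha$ with the $K$-standard path $\gamma$ joining the same endpoints, and to exploit that $\QT$ is a quasi-tree (hence $\delta$-hyperbolic) together with the fact, from Theorem \ref{quasitreeThm}, that each $S \in \mathbb S$ is totally geodesically embedded with the shortest projection onto $S$ agreeing with $\pi_S$ up to bounded Hausdorff distance.

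First I would recall from \cite[Proposition 3.7]{1} and Lemma \ref{admisIntvLem} that for $K$ large, the standard path $\gamma$ through $\mathbb S_K(U,V) \cup \{U,V\}$ is a uniform quasi-geodesic in $\QT$: inside each vertex $S_i$ it runs along a geodesic of length $\gtrsim_\theta d_{S_i}(U,V) > K$ between a point near $\pi_{S_i}(S_{i-1})$ and a point near $\pi_{S_i}(S_{i+1})$, and the total order of Lemma \ref{pc} together with (\textbf{PC3}--\textbf{PC4}) forces the concatenation to backtrack by only a bounded amount at each junction. Hence by the Morse lemma in the hyperbolic space $\QT$, $\alpha$ and $\gamma$ are within uniform Hausdorff distance $H = H(\theta)$ of each other. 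The second step is to upgrade this coarse closeness to the sharp conclusion. Fix $S \in \mathbb S_{K'}(U,V)$ with $K'$ chosen well above $K$ plus all the additive errors accumulated so far. Since $\gamma$ enters $S$ and crosses a geodesic subsegment of length $\gtrsim_\theta d_S(U,V) \ge K'$, and $\alpha$ stays $H$-close to $\gamma$, the geodesic $\alpha$ must itself come within $H$ of $S$; because $S$ is contracting in $\QT$ (quasi-convexity, Lemma \ref{pp}(1), applied in the hyperbolic space), once $\alpha$ is close to $S$ it must fellow-travel a definite portion of $S$ — in particular its entry and exit points to the neighborhood of $S$ are determined up to bounded error by where $\gamma$ enters and exits, namely $\pi_S(U)$ and $\pi_S(V)$.

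The third step is the bookkeeping that turns "$\alpha$ comes close to $S$ near $\pi_S(U)$ and near $\pi_S(V)$" into the stated $D$-closeness with a \emph{uniform} $D$. Here I would use the ``moreover'' clause of Theorem \ref{quasitreeThm}: the nearest-point projection in $\QT$ onto the totally geodesic subspace $S$ agrees with $\pi_S$ up to finite Hausdorff distance, so the entry point of $\alpha$ to $N_c(S)$ projects to within bounded distance of $\pi_S(\alpha_-) = \pi_S(u)$; and $\pi_S(u)$ is close to $\pi_S(U)$ since $u \in U$ and $\mathbb S$ has bounded projection (Lemma \ref{bp}, (\textbf{PC0})). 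Combining the estimates along the geodesic triangle with vertices $\alpha_-$, the entry point, and a point on $S$ near $\pi_S(U)$, and invoking thinness of triangles in $\QT$, yields the constant $D$; symmetrically for the exit point and $\pi_S(V)$. Finally one checks that $\alpha$ genuinely \emph{passes through} each such $S$ (rather than merely near it) by noting that the two projected points are separated by $\gtrsim K'$, so the contracting/quasi-convexity estimate of Lemma \ref{pp}(4) forces a long subsegment of $\alpha$ into a bounded neighborhood of $S$.

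The main obstacle I anticipate is making the constants \emph{uniform and independent of the pair $(U,V)$ and of $S$} — in particular controlling the cumulative additive error from (i) the $2\theta$-ambiguity in replacing $d^\pi_S$ by $d_S$, (ii) the Morse constant for the standard quasi-geodesic, and (iii) the Hausdorff gap between $\pi_S$ and nearest-point projection in $\QT$ — and choosing $K'$ large enough to dominate all of them simultaneously while still having $\mathbb S_{K'}(U,V)$ nonempty in the relevant cases. This is precisely the delicate point handled in \cite[Section 4]{1}, and in a self-contained write-up I would isolate it as a single ``all constants depend only on $\theta$'' lemma proved once and for all before fixing $K'$.
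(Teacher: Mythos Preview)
The paper does not give its own proof of this lemma: it is stated with the attribution \cite[Lemma 4.15]{1} and no proof environment follows. It is simply imported from Bestvina--Bromberg--Fujiwara, so there is nothing in the present paper to compare your argument against.

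That said, a brief remark on your plan. The strategy --- compare a geodesic to the standard path via the Morse lemma, then use contracting/quasi-convexity of each $S$ to pin down entry and exit points --- is the right shape. The one point to be careful about is logical order: you invoke Theorem \ref{quasitreeThm} (that $\QT$ is a quasi-tree) and the Morse lemma to control the geodesic, but in \cite{1} the result you are trying to prove (their Lemma 4.15) is part of the machinery establishing that standard paths are quasi-geodesics and hence that $\QT$ is a quasi-tree. So a self-contained proof in the style of \cite{1} would work directly with the projection axioms and the structure of standard paths, \emph{without} assuming hyperbolicity of $\QT$ in advance; your argument is fine if you are willing to take Theorem \ref{quasitreeThm} as a black box (as this paper does), but it is not the route \cite{1} itself takes.
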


%Every $(D, \tau)$-admissible path between $u$ and  $v$ is sent to a path between $u, v$ in $\PC$. 
%\begin{lemma}[??]
%Any geodesic between $u, v\in \CS$ in $\QT$ is sent to a quasi-geodesic in the projection complex $\PC$.
%\end{lemma}

\section{Rotating family and embedding free semigroups}\label{Section5}
In this section,  we apply the theory of rotating familly to the coned-off of quasi-tree of spaces $\mathbb S$ in the previous section. This allows to embed the free semigroup provided in Section \ref{Section3} into small cancellation quotients as in Lemma \ref{lastlem}. This completes the proof of Theorem \ref{mainthm}.

\subsection{Cone-off of the quasi-tree of spaces and rotating family}

We first introduce a construction of a   hyperbolic metric space by conning off a collection of quasi-convex subsets from a given hyperbolic sapce. 

Let $Z$ be a hyperbolic space with a collection $\mathbb S$ of uniformly quasiconvex subspaces. Assume that $\mathbb S$ has bounded intersection.  For $r\ge 0$, we first define the \textit{hyperbolic cone-off} $\dot Z_r( \mathbb S)$ of $Z$ along $\mathbb S$. 

For each $S\in \mathbb S$, the \textit{hyperbolic cone} $C_r(S)$ is the quotient space of the product $S\times [0,r]$ by collapsing $S\times 0$. The collasped point is called  the \textit{apex} o  denoted by $a(S)$, and $S\times 1$ the \textit{base} of the cone. The space is equiped with a geodesic metric so that it is the metric completion of the universal covering of a closed hyperbolic disk punctured at the origin.

The \textit{hyperbolic cone-off}  $\dot Z_r(\mathbb S)$ is the quotient space of the disjoint union $$Z\coprod_{S\in \mathbb S} C_r(\mathbb S)$$ by gluing $S$ with the base of the cone $C_r(S)$, equipped with length metric. Since $\mathbb S$ has bounded intersection, then for $r\gg0$,   $\dot{Z}_r(\mathbb S)$ is also hyperbolic \cite [Theorem 3.5.2 (E)]{1}.

Assume that $G$ acts isometrically on $Z$ and leaves invariant $\mathbb S$.  The action naturally extends by isometry to the  {hyperbolic cone} $C_r(S)$ by the rule 
$g(x,t)=(gx,t)$ for any $g\in G$, $x\in S$, $0\leq t \leq r$. This is a prototype of the notion of a rotating family introduced in \cite{2}.

\begin{dfn}
Assume $G$ acts isometrically on a metric space $\dot Z$. Let $A$ be a $G$-invariant set in $\dot Z$ and a collection of subgroups $\{G_a|a\in A\}$ of $G$ such that $G_a(a)=a$, $gG_ag^{-1}=G_{ga}$ for any $a\in A$, $g\in G$.

We call such a pair $(A,\{G_a:a\in A\})$ a \textit{rotating family}.
\end{dfn}

Returning to the above cone-off construction, the apexes $A=\{a(S): S\in \mathbb S\}$ and the stabilizers $G_a$ for $a\in A$ together consist of a rotating family.

From now on, we fix a contracting element $h$ which is independent from $f$ given in Lemma \ref{freesemigroup} and consider  the contracting system  $\mathbb S=\{g\ax(h): g\in G\}$ with   bounded interseciton. The existence of $h$ is assured by the non-elementary group $G$, since there are inifinitely many pairwise independent contracting elements under this assumption (cf. \cite[Lemma 2.12]{4}).

By Theorem \ref{quasitreeThm}, there exists $L>0$ such that the constructed quasi-tree $\QT$ of spaces is  a quasi-tree, on which $G$ acts coboundedly with a hyperbolic element $h$. We then apply the cone-off construction as above to $Z=\QT$ along $\mathbb S$. 

In view of projection complex, we have the following connection with the above hyperbolic cone-off space. 
\begin{lemma}
The projection complex $\PC$ is quasi-isometric to the hyperbolic cone-off $\dot Z_r(\mathbb S)$ of the quasi-tree of spaces $\QT$ along $\mathbb S$.
\end{lemma}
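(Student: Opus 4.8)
The plan is to exhibit a pair of coarsely inverse $G$-equivariant quasi-isometries between $\PC$ and $\dot Z_r(\mathbb S)$, where $Z=\QT$. The natural candidate for a map $\dot Z_r(\mathbb S)\to\PC$ is the composition of two collapses: first send each cone point $a(S)$ to the vertex $S\in\PC$, send each base $S\subset Z$ to $S\in\PC$ as well, and send the cone $C_r(S)=(S\times[0,r])/(S\times 0)$ entirely to the vertex $S$; second, use the natural collapse map $\QT\to\PC$ described right after Theorem \ref{quasitreeThm}, which crushes each totally geodesically embedded subspace $S\subset\QT$ to the vertex $S$ and sends the length-$L$ edges of $\QT$ to unit edges of $\PC$. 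The resulting map $\Psi\colon\dot Z_r(\mathbb S)\to\PC$ is visibly $G$-equivariant and surjective on vertices. In the reverse direction, I would choose for each vertex $S\in\PC$ a point in the copy of $S$ inside $Z\subset\dot Z_r(\mathbb S)$ (say the image of a fixed orbit point), and for each edge of $\PC$, i.e. each pair $U,V$ with $\mathbb S_K(U,V)=\varnothing$, use quasi-convexity/bounded projection to join the chosen points by a path of uniformly bounded length in $Z$; this gives a coarse section $\Sigma\colon\PC\to\dot Z_r(\mathbb S)$.

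The key steps, in order, are: (1) check $\Psi\circ\Sigma$ and $\Sigma\circ\Psi$ are each within bounded distance of the identity — for $\Psi\circ\Sigma$ this is immediate from the construction, and for $\Sigma\circ\Psi$ one must see that any point of $\dot Z_r(\mathbb S)$ is boundedly close (in $\dot Z_r(\mathbb S)$) to the copy of some $S\subset Z$, which holds because every point of a cone $C_r(S)$ is within distance $r$ of its base. (2) Check that $\Psi$ is coarsely Lipschitz: an edge of $Z$ has length $L$ and maps to an edge of $\PC$, while moving inside a subspace $S\subset\QT$ or inside a cone $C_r(S)$ does not change the image vertex, so $\Psi$ does not increase distances by more than a multiplicative constant depending on $L$. (3) The genuinely substantive step is the lower bound: if $\Psi(x),\Psi(y)$ are far apart in $\PC$, then $x,y$ are far apart in $\dot Z_r(\mathbb S)$. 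Equivalently, a short path in $\dot Z_r(\mathbb S)$ between points over $U$ and $V$ must project to a short path in $\PC$. For this I would argue that a path realizing $d_{\dot Z_r}(x,y)$ can be homotoped/surgered, at a bounded additive cost, into one that enters and exits each cone $C_r(S)$ through its apex, hence corresponds after $\Psi$ to an edge-path in $\PC$ whose length is controlled by the number of cones traversed; combined with the description of the standard path through $\mathbb S_K(U,V)\cup\{U,V\}$ (Lemma \ref{pc}, Definition \ref{stdpathDef}) and the fact that geodesics in $\QT$ pass close to the projections $\pi_S(U),\pi_S(V)$ for $S\in\mathbb S_{K'}(U,V)$ (Lemma \ref{unicloseLem}), this gives $d_{\mathcal P}(U,V)\lesssim d_{\dot Z_r}(x,y)$.

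I expect step (3) — the quasi-isometric lower bound — to be the main obstacle, because it requires comparing the cone-off metric, in which passing through an apex costs roughly $2r$ regardless of the diameter of $S$, with the projection-complex metric, in which adjacency is governed by the combinatorial condition $\mathbb S_K(U,V)=\varnothing$. The right tool is the standard-path machinery of Bestvina–Bromberg–Fujiwara together with the fact (already recorded in Lemma \ref{admisIntvLem} and Proposition \ref{admis}) that the elements of $\mathbb S_K(U,V)$ are exactly the subsets a geodesic from $u$ to $v$ must traverse deeply; going through the corresponding apexes is the cheap route in $\dot Z_r(\mathbb S)$, and counting those apexes recovers $d_{\mathcal P}(U,V)$ up to multiplicative and additive constants. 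Once these bounds are in place, $\Psi$ is a $G$-equivariant quasi-isometry and the lemma follows; in particular, since $\PC$ is hyperbolic of infinite diameter (Theorem \ref{projcplxThm}), this also re-confirms that $\dot Z_r(\mathbb S)$ is hyperbolic, consistent with the earlier remark that $\dot Z_r(\mathbb S)$ is hyperbolic for $r\gg 0$.
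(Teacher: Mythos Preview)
The paper states this lemma without proof, treating it as a motivating observation (the very next sentence is ``Despite of this fact, we still choose to work with the cone-off space\ldots''), and never invokes it later. So there is no argument in the paper to compare against; your outline, if carried out, would supply a proof the authors chose to omit.

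Your scheme is essentially correct, but step (3) is overcomplicated. The lower bound $d_{\dot Z_r}(x,y)\gtrsim d_{\mathcal P}(\Psi(x),\Psi(y))$ is purely combinatorial and does not require Lemma~\ref{unicloseLem}, standard paths, or admissible paths. Any path in $\dot Z_r(\mathbb S)$ decomposes into segments lying in some cone $C_r(S)$ or subspace $S\subset\QT$, together with traversals of the length-$L$ edges of $\QT$. By construction of $\QT$, such an edge joins $S$ to $S'$ only when $S,S'$ are adjacent in $\PC$; hence the ordered list of subspaces visited by the path is an edge-path in $\PC$ from $\Psi(x)$ to $\Psi(y)$. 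The number of length-$L$ edges crossed is therefore at least $d_{\mathcal P}(\Psi(x),\Psi(y))$, giving $d_{\dot Z_r}(x,y)\ge L\cdot d_{\mathcal P}(\Psi(x),\Psi(y))-L$ immediately. Your invocation of Lemma~\ref{unicloseLem} is slightly misplaced, since that lemma concerns geodesics in $\QT$, not in the cone-off; but it is also simply unnecessary. A minor stylistic point: for the section $\Sigma$ it is tidier to send the vertex $S$ to the apex $a(S)$ rather than to a point of $S$, since then an edge of $\PC$ lifts to a path of length exactly $2r+L$ with no further estimate needed.
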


Despite of this fact, we still choose to work with the cone-off space since it allows us to exploit the theory of very rotating family developed by Dahmani-Guirardel-Osin \cite{2} in this setting. 

Roughly speaking, a rotating family $(A,\{G_a:a\in A\})$ is called \textit{very rotating} if every element in $G_a$ rotates around $a$ with a very big angle. This big angle is usually acheived by taking a sufficiently deep subgroup of $G_a$. This is the content of the following result. 
\begin{lemma}\label{RotFamilyLem}
There exist universal constants $\delta>0$, $r>20\delta$ such that for any hyperbolic element $h\in G \curvearrowright \QT$, there exist
$k=k(h)$, $l=l(h)>0$ with the following property.

Consider the cone-off space $ \dot Z_r(\mathbb S)$ over the scaled metric space $Z_l=(\QT,l\cdot d)$, where $d$ is the metric of $\QT$.  For every $n\geq 1$, set $$\mathbb E_n=\{g \langle h^{nk}\rangle g^{-1}: g\in G \}.$$

Then  $(A(\mathbb S),\mathbb E_n)$ is a very rotating family on the $\delta$-hyperbolic space $\dot{Z_l}(\mathbb S)$.
\end{lemma}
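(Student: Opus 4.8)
The plan is to realize $(A(\mathbb S),\mathbb E_n)$ as an instance of the cone-off / very rotating family construction of Dahmani--Guirardel--Osin \cite{2}, produced by coning off the quasi-tree $\QT$ along $\mathbb S$ after an $h$-dependent rescaling of its metric; the universality of $\delta$ and $r$ will come out of the cone-off machinery, while the rescaling factor $l$ and the power $k$ absorb the dependence on $h$. First I would record the geometric input from the previous subsections: by Lemma \ref{RotonQT} the space $\QT$ is a quasi-tree, hence $\delta_0$-hyperbolic for some $\delta_0=\delta_0(h)$, on which $G$ acts isometrically preserving the $G$-invariant family $\mathbb S$; by Theorem \ref{quasitreeThm} the members of $\mathbb S$ are uniformly quasiconvex (indeed totally geodesic) with bounded intersection, say with quasiconvexity constant $\epsilon_0=\epsilon_0(h)$ and intersection bound $D_0=D_0(h)$; and since $h\in E(h)$, the element $h$ preserves $\ax(h)\in\mathbb S$ and translates along it by some $\tau=\tau(h)>0$.

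Next comes the rescaling. The cone-off results of \cite{2} furnish universal constants: a cone radius $r>0$, a hyperbolicity constant $\delta>0$ with $r>20\delta$, a smallness constant $c>0$, and a displacement threshold $T>0$, with the property that coning off with radius $r$ any $G$-space which is $\le c$-hyperbolic, along a $G$-invariant family of $\le c$-quasiconvex subsets with intersection bound $\le c$, yields a $\delta$-hyperbolic $G$-space with $>20\delta$-separated apexes, preserved by $G$; and moreover any family of apex subgroups that displaces the corresponding cone bases by at least $T$ defines a very rotating family in the sense of \cite{2}. I would therefore set $l=l(h):=\min\{c/\delta_0,\,c/\epsilon_0,\,c/D_0\}$ and replace $d$ by $l\cdot d$ to form $Z_l=(\QT,l\cdot d)$; in $Z_l$ all three relevant constants are $\le c$, so the cone-off $\dot{Z_l}(\mathbb S):=\dot{(Z_l)}_r(\mathbb S)$ is $\delta$-hyperbolic with $>20\delta$-separated apex set $A(\mathbb S)$, on which $G$ acts preserving $A(\mathbb S)$.

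It remains to check that $\mathbb E_n$ is a legitimate family of apex subgroups satisfying the displacement bound. Since $\langle h\rangle$ has finite index in the virtually cyclic group $E(h)$ (Lemma \ref{fi}), the normal core of $\langle h\rangle$ in $E(h)$ is $\langle h^{k_0}\rangle$ for some $k_0=k_0(h)$; as $\mathrm{Aut}(\mathbb Z)=\{\pm1\}$, every $\langle h^m\rangle$ with $k_0\mid m$ is then again normal in $E(h)=\mathrm{Stab}_G(\ax(h))$. I would take $k=k(h)$ to be a multiple of $k_0$ with $l\,k\,\tau\ge T$. Then for $S=g\ax(h)$ the subgroup $g\langle h^{nk}\rangle g^{-1}$ lies in $\mathrm{Stab}_G(S)=gE(h)g^{-1}$ and hence fixes the apex $a(S)$; it depends only on $S$ (not on the choice of $g$) because $\langle h^{nk}\rangle\lhd E(h)$; and it is plainly conjugation-equivariant, so $(A(\mathbb S),\mathbb E_n)$ is a rotating family. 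Finally, the generator $gh^{nk}g^{-1}$ of $G_{a(S)}$ translates every point of $S$, in the metric of $Z_l$, by at least $l\,nk\,\tau\ge l\,k\,\tau\ge T$ for every $n\ge1$; so the criterion above applies and $(A(\mathbb S),\mathbb E_n)$ is a very rotating family on the $\delta$-hyperbolic space $\dot{Z_l}(\mathbb S)$.

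I expect the main obstacle to be the second step: extracting from \cite{2} the precise cone-off statement with a universal output hyperbolicity constant $\delta$ and a fixed window $r>20\delta$, together with the companion ``large displacement implies very rotating'' criterion, and then checking that a single scaling factor $l(h)$ simultaneously pushes the $h$-dependent constants $\delta_0,\epsilon_0,D_0$ below the threshold $c$. Once these facts are correctly cited, the remaining points---normality of $\langle h^{nk}\rangle$ in $E(h)$, well-definedness and equivariance of the apex subgroups, and the displacement estimate---are routine.
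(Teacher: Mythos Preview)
Your proposal is correct and follows essentially the same approach as the paper: the paper simply defers to \cite[Lemma 8.4]{3}, noting that the only places where properness of the $G$-action was used there---namely, bounded intersection of $\mathbb S$ and properness of $E(h)\curvearrowright\ax(h)$---are supplied here by Lemma \ref{RotonQT}. What you have written is an accurate unpacking of that deferred argument: rescale $\QT$ so that the hyperbolicity, quasiconvexity, and intersection constants fall below the DGO threshold, cone off with the universal radius $r$, and then choose $k$ (a multiple of the index of the normal core of $\langle h\rangle$ in $E(h)$) large enough that the minimal displacement of $\langle h^{nk}\rangle$ on $\ax(h)$ exceeds the very-rotating threshold.
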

\begin{proof}[Sketch of proof]
Its proof is identical to that of \cite [Lemma 8.4]{3}, where the action of $G$ on $Z$ is assumed to be proper. In fact, the properness is required only when verifying that the system $\mathbb S$ has bounded intersection  and the action of $E(h)$ acts properly on $\ax(h)$. In the current setting, these two facts are guaranteed by Lemma \ref{RotonQT}. Thus, we have the same result.
\end{proof}

%The rescaling trick in the above statement is to Now fix a constant $r>0$. 

Modulo Lemma \ref{RotFamilyLem}, we obtain the following result by the same proof of \cite [Lemma 8.8]{3} using the quantitative Greendlinger's Lemma in \cite{2}.  
\begin{lemma}\label{deepPtLem}
There exist constants $\epsilon=\epsilon(h)$, $k=k(h)>0$ with the following property. For any $n\ge 1$, there exists $M=M(h,n)>0$ for $n>1$ with $M(h,n)\rightarrow \infty$ as $n\rightarrow \infty$ such that for any $1\neq g\in  \langle\langle h^{kn}\rangle\rangle$, any geodesic $[o,go]$ in $\QT$ contains a $(\epsilon,M)$-deep point in some $S\in \mathbb S$.
\end{lemma}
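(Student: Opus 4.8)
The plan is to run the argument of \cite[Lemma 8.8]{3} with the very rotating family of Lemma \ref{RotFamilyLem} in the role played there by a rotating family built from a proper action; the only place where properness entered in \cite{3} --- the bounded intersection of $\mathbb S$ and the proper action of $E(h)$ on its axis --- is supplied here by Lemma \ref{RotonQT} and Theorem \ref{quasitreeThm}. Concretely, I would first fix, for the given contracting element $h$, the constants $\delta>0$, $r>20\delta$, $k=k(h)$ and $l=l(h)$ of Lemma \ref{RotFamilyLem}, so that $(A(\mathbb S),\mathbb E_n)$ is a very rotating family on the $\delta$-hyperbolic cone-off $\dot Z_l(\mathbb S)$ of $Z_l=(\QT,l\cdot d)$ along $\mathbb S$. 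Two elementary remarks: $\langle\langle h^{kn}\rangle\rangle$ is the subgroup of $G$ generated by the union of the subgroups in $\mathbb E_n$; and for every $S\in\mathbb S$ and every nontrivial $\psi\in G_{a(S)}$ one has $d(x,\psi x)\ge nk\cdot\ell(h)-c_0$ for each $x\in S$, since $G_{a(S)}$ is conjugate to $\langle h^{nk}\rangle$ and moves $S$ by at least the corresponding translation length --- here $\ell(h)>0$ is the translation length of $h$ on $\QT$ (Lemma \ref{RotonQT}) and $c_0=c_0(h)$ is uniform. Thus the minimal nontrivial rotation of the family $\mathbb E_n$ grows linearly in $n$; this will be the source of $M(h,n)\to\infty$.

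Next, given $1\ne g\in\langle\langle h^{kn}\rangle\rangle$, I would apply the quantitative Greendlinger lemma for very rotating families of \cite[Section 5]{2}: any geodesic $\hat\gamma$ from $o$ to $go$ in $\dot Z_l(\mathbb S)$ must enter some cone $C_r(S_0)$ and ``turn by'' a nontrivial $\psi\in G_{a(S_0)}$ there, entering the base at a point $p$ and leaving it at $\psi p$. By the standard behaviour of geodesics in a cone-off --- the entry (resp. exit) point of a geodesic into a cone is uniformly close to the closest-point projection onto the corresponding $S$ of the initial (resp. terminal) portion of the geodesic --- the point $p$ is uniformly close to $\pi_{S_0}(o)$ and $\psi p$ to $\pi_{S_0}(go)$. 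Since $p,\psi p\in S_0$, this gives, up to the fixed rescaling by $l$ and a uniform additive error,
$$
d^{\pi}_{S_0}(o,go)\ \ge\ d(p,\psi p)-c_1'\ \ge\ nk\cdot\ell(h)-c_1
$$
for a uniform $c_1=c_1(h)$.

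It then remains to conclude inside $\QT$. By Theorem \ref{quasitreeThm} and the remark following it, $\mathbb S$ is a uniform contracting system in the hyperbolic space $\QT$; let $C$ be its contracting constant. For $n$ large we have $K_n:=nk\cdot\ell(h)-c_1>2C$, so Lemma \ref{pp}(4) applied to the geodesic $[o,go]$ and $A=S_0$ yields $\diam{N_C(S_0)\cap[o,go]}\ge K_n/2$. Choosing $a_1,a_2\in[o,go]\cap N_C(S_0)$ with $d(a_1,a_2)\ge K_n/2$, the subsegment $[a_1,a_2]\subset[o,go]$ has both endpoints $C$-close to $S_0$; by quasi-convexity (Lemma \ref{pp}(1)) and $\delta$-hyperbolicity of $\QT$ it lies in $N_\epsilon(S_0)$ for a uniform $\epsilon=\epsilon(C,\delta)$. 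Hence $[o,go]$ contains a connected subsegment of length at least $K_n/2$ inside $N_\epsilon(S_0)$, i.e.\ an $(\epsilon,M)$-deep point with $M=M(h,n):=\big(nk\cdot\ell(h)-c_1\big)/4$, and $M(h,n)\to\infty$ as $n\to\infty$, as required.

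I expect the Greendlinger step to be the main obstacle: one must extract from the Dahmani--Guirardel--Osin theory of (very) rotating families the quantitative statement that a geodesic representing a nontrivial element of $\langle\langle h^{kn}\rangle\rangle$ is forced to make a full rotation --- hence, for $n\gg 0$, a long excursion --- around some apex, and combine it with the cone-off projection estimate used above. This is precisely what \cite[Lemma 8.8]{3} establishes for a proper action; the point of the present lemma is only that the same proof goes through verbatim once properness is replaced by Lemma \ref{RotonQT}, which is where the \emph{very} rotating hypothesis (and thus the choices of the scale $l$ and the exponent $k$ in Lemma \ref{RotFamilyLem}) is used.
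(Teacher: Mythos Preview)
Your proposal is correct and follows exactly the approach the paper indicates: the paper's proof is a single sentence referring to \cite[Lemma 8.8]{3} together with the quantitative Greendlinger Lemma of \cite{2}, modulo Lemma \ref{RotFamilyLem}. You have simply unpacked that reference --- setting up the very rotating family, invoking Greendlinger to force a long rotation around some apex $a(S_0)$, and then transferring the resulting projection lower bound back to $\QT$ via Lemma \ref{pp}(4) --- so your argument and the paper's coincide.
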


\subsection{Embedding free semigroups into quotients: end of the proof of Theorem  \ref{mainthm}}\label{SSection52}

The following result is the last step in proving Theorem \ref{mainthm}.

\begin{lemma}\label{lastlem}
For each $0<\delta<\delta_G$, there exist  a free semi-group $\Gamma$   and  an integer $n\ge 1$ such that the map $$X\rightarrow X/{\langle\langle h^{kn}\rangle\rangle}$$ is injective on $\Gamma o$ and $\delta_\Gamma\ge \delta$.

\end{lemma}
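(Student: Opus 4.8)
The plan is to combine the large-tree construction from Lemma \ref{freesemigroup} with the deep-point dichotomy from Lemma \ref{deepPtLem}, translated back and forth between the quasi-tree of spaces $\QT$ and the original space $X$. Fix $0<\delta<\delta_G$. First I would invoke Lemma \ref{freesemigroup} to obtain a contracting element $f$, constants $K,\theta$, and a free semigroup $\Gamma$ generated by a set of the form $Af$ (with $A$ a $C$-separated subset of an annulus $A(o,r,\Delta)$ for suitable $r\gg 0$), such that $\delta_\Gamma>\delta$, $\sharp\,\Gamma\cap N(o,r)\asymp \exp(\delta_\Gamma r)$, and each geodesic branch of the standard Cayley tree $\mathcal T$ maps to a $(K,\theta)$-admissible path $\gamma$ relative to $\{g\ax(f):g\in G\}$. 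Then I would pick a contracting element $h$ independent from $f$, so that the conclusions of Section \ref{Section4} and Section \ref{Section5} apply to $\mathbb S=\{g\ax(h):g\in G\}$, the quasi-tree of spaces $\QT$, its cone-off $\dot Z_l(\mathbb S)$, and the very rotating family of Lemma \ref{RotFamilyLem}.

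The core of the argument is to show injectivity of $\Gamma o \hookrightarrow X/\llangle h^{kn}\rrangle$ for $n\gg 0$. Suppose not: then there are distinct $g_1,g_2\in\Gamma$ with $g_1 o$ and $g_2 o$ identified in the quotient, i.e. $g:=g_2^{-1} g_1\in\llangle h^{kn}\rrangle\setminus\{1\}$ after normalizing (note $g o=g_2^{-1}g_1 o$ has $\bar d(\bar o,\bar{go})=0$, which by definition of the quotient metric forces $g\in\llangle h^{kn}\rrangle$). Apply Lemma \ref{deepPtLem} inside $\QT$: any geodesic $[o,go]$ in $\QT$ contains an $(\epsilon,M)$-deep point relative to $\mathbb S$, with $M=M(h,n)\to\infty$ as $n\to\infty$. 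On the other hand, the broken geodesic $\gamma$ in $X$ from $o$ to $go$ (a concatenation of two admissible branches of $\mathcal T$, or rather a single admissible path joining $g_1 o$ and $g_2 o$ through the tree) is, by Lemma \ref{freesemigroup}(3), $(\epsilon',M')$-transitional relative to $\mathbb S$ for fixed $\epsilon',M'$ depending only on $h$ and $\delta$ (not on $n$). The plan is to transport this transitionality from $X$ to $\QT$: since $\mathbb S$ is a $G$-invariant contracting system with bounded intersection in both $X$ and $\QT$, and since projections and admissible-path geometry are comparable under the collapsing/lifting maps between $X$, $\QT$, and $\PC$ (Theorem \ref{quasitreeThm}, Lemma \ref{unicloseLem}, Lemma \ref{admisIntvLem}), one controls how long a geodesic in $\QT$ from $o$ to $go$ can linger in the $\epsilon$-neighbourhood of any $S\in\mathbb S$ by a bound independent of $n$. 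Choosing $n$ large enough that $M(h,n)$ exceeds this bound yields a contradiction.

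Concretely, the bookkeeping step I would carry out is: (i) realize the path in $X$ joining $g_1 o$ to $g_2 o$ as a $(K,\theta)$-admissible path $\gamma$ relative to $\{g\ax(f)\}$, hence a $(\lambda,c)$-quasi-geodesic by Proposition \ref{admis}; (ii) use Lemma \ref{freesemigroup}(3) to get that any geodesic with endpoints on $\gamma$ avoids long penetration into $\mathbb S$; (iii) observe that the image of $\gamma$ in $\QT$ is still a quasi-geodesic whose intersections with members of $\mathbb S$ are uniformly short, because an $(\epsilon,M)$-deep point of a $\QT$-geodesic $[o,go]$ would pull back (via the comparison of $\QT$-projections with $\pi_S$, the bounded intersection of $\mathbb S$, and the admissible-path structure) to a long penetration of a geodesic in $X$ with endpoints near $\gamma$ into some $S\in\mathbb S$ — contradicting (ii) once $M$ is large; (iv) conclude. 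Finally, injectivity plus $\delta_\Gamma>\delta$ gives the statement, with the quotient $\bar G_n=G/\llangle h^{kn}\rrangle$ automatically proper since $\llangle h^{kn}\rrangle$ is infinite (it contains the infinite-order element $h^{kn}$).

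The main obstacle I expect is step (iii): making rigorous the passage of the ``uniformly transitional'' property of $\gamma$ from $X$ to the quasi-tree $\QT$, i.e. showing that deep penetration of a $\QT$-geodesic into $S\in\mathbb S$ is incompatible with $\gamma$ being $(\epsilon,M)$-transitional in $X$ with constants independent of $n$. This requires carefully matching three geometries — $X$, $\QT$, and $\PC$ — and controlling the error terms uniformly; in particular one must ensure that the constants $\epsilon,M$ coming out of Lemma \ref{deepPtLem} (which depend on $h$ but, crucially, with $M\to\infty$) genuinely beat the fixed constants coming from Lemma \ref{freesemigroup}(3) and Proposition \ref{admis}. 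Everything else — the growth estimate, the properness of the quotient, the reduction of the identification $g_1 o\equiv g_2 o$ to membership in $\llangle h^{kn}\rrangle$ — is routine given the earlier results, and I would simply cite the corresponding arguments of \cite{3} where the same strategy was executed for proper actions.
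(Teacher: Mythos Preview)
Your overall strategy matches the paper's, and you correctly isolate step (iii) --- pulling the deep point in $\QT$ back to a deep point in $X$ --- as the crux. The paper carries this out via Claim \ref{deepinalpha2}: a $(\tilde\epsilon,\tilde M)$-deep point of a $\QT$-geodesic in some $S_0\in\mathbb S$ forces $S_0$ into the interval set $\mathbb S_{K'}(U,V)$ (using Lemma \ref{unicloseLem} and bounded intersection in $\QT$), hence $d^\pi_{S_0}(o,g_1 o)\ge\tilde K\to\infty$ in $X$, which by Lemma \ref{pp}(4) gives deep penetration of the $X$-geodesic $[o,g_1o]_X$ into $S_0$, contradicting Lemma \ref{freesemigroup}(3).

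There is, however, a genuine gap earlier in your reduction. You propose to compare the $\QT$-geodesic $[o,go]$ (equivalently $[g_1o,g_2o]$) with a single concatenated path ``through the tree'' joining $g_1 o$ to $g_2 o$, and to invoke Lemma \ref{freesemigroup}(3) on it. But that lemma is stated only for a \emph{single branch} $\gamma$ of the tree --- i.e.\ for geodesics whose two endpoints lie on one $(K,\theta)$-admissible path emanating from $o$ --- and $g_1 o,\,g_2 o$ are in general on different branches. The concatenation through their common prefix $wo$ need not be an admissible path (the condition (\textbf{BP}) at the joining vertex has not been checked), so neither Proposition \ref{admis} nor Lemma \ref{freesemigroup}(3) applies to it without further argument; and since $X$ is not assumed hyperbolic, you cannot split the geodesic $[g_1o,g_2o]_X$ at $wo$ by a thin-triangle argument either. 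The paper circumvents this by a step you omit: rather than comparing $[o,go]$ with a concatenated path, it forms in $\QT$ the geodesic triangle with sides $\alpha=[o,g_1o]_{\mathcal C}$, $\beta=[o,g_2o]_{\mathcal C}$, $\gamma=g_2[o,go]_{\mathcal C}$, and uses hyperbolicity of $\QT$ (Claim \ref{deepinalpha}) to transfer the $(\epsilon,M)$-deep point from $\gamma$ to one of $\alpha$ or $\beta$. Now one has a $\QT$-geodesic whose endpoints $o,\,g_1o$ (say) lie on a single branch, and after the $\QT\to X$ transfer described above, Lemma \ref{freesemigroup}(3) applies verbatim. This triangle step exploits precisely that $\QT$ is hyperbolic while $X$ need not be; without it your step (ii) is unjustified.

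A minor point: your reduction ``$g_1o\equiv g_2o$ in $X/\llangle h^{kn}\rrangle$ forces $g_2^{-1}g_1\in\llangle h^{kn}\rrangle$'' is not quite right when $\mathrm{Stab}(o)$ is nontrivial. The paper's formulation --- there exists $1\ne g\in\llangle h^{kn}\rrangle$ with $g_1o=g_2go$ --- is the correct one and is all that is needed.
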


Assuming Lemma \ref{lastlem}, we complete the proof of Theorem \ref{mainthm} as follows.  Note  that  the quotient metric $\bar{d}$ is smaller than the metric $d$, so we have $$\delta_{G/\langle\langle h^{nk}\rangle\rangle}
\geq \delta_{\Gamma}\geq \delta.$$
Thus, we get $\delta_{G/\langle\langle h^{nk}\rangle\rangle}
\rightarrow \delta_G$ concluding the proof.

We now give the proof of Lemma \ref{lastlem}.
\begin{proof}[Proof of Lemma \ref{lastlem}]
 By Lemma \ref{freesemigroup},   there exist a free semi-group $\Gamma$ and an integer $n_\delta>0$ such that the standard Cayley graph $\mathcal T$ is mapped naturally to a quasi-geodesic tree rooted at $o\in X$. In fact,  each geodesic branch originating from $1$ in $\mathcal T$ labels a $(n_\delta,\theta)$-admissible path in $X$ relative to the contracting system $\mathbb S=\{g\ax(h): g\in G\}$.
 
Let $n\gg n_\delta$ be a big integer  chosen later (see the Claim \ref{deepinalpha2}). We now proceed by contradiction. Assume that there exist two elements $g_1, g_2\in \Gamma$ such that $$g_1 \langle\langle h^{kn}\rangle\rangle \cdot o=g_2 \langle\langle h^{kn}\rangle\rangle \cdot o.$$ Without loss of generality, assume that there exists $1\neq g \in \langle\langle h^{kn}\rangle\rangle$ such that $g_1o =g_2 go$ for the action   $G \curvearrowright X$. 

\textbf{Step 1.}
We first consider the action of $G$ on  the quasi-tree of spaces $\QT$. Since  $S=g\ax(h) \in \mathbb S$ and $\ax(h)$ are geodesically embedded in $\QT$, we shall also understand the points $o$ and $go$ as  in the space $\QT$. Consider the   triangle given by  the following geodesics in $ \QT$,
$$\alpha=[o,g_1o]_{\mathcal C}, \beta=[o, g_2o]_{\mathcal C}, \gamma=g_2[o, go]_{\mathcal C},$$
where the subscript $\mathcal C$ is used to mean a geodesic in the $\QT$.

By Lemma \ref{deepPtLem}, for $g\in  \langle\langle h^{kn}\rangle\rangle$, there is $S_0\in \mathbb S$ such  that $\gamma$ contains a $(\epsilon, M)$-deep point in $S_0$, where $M=M(h, n)\to \infty$ as $n\to \infty$.
\begin{clm}\label{deepinalpha}
There exist $\tilde \epsilon=\tilde \epsilon(h), \tilde M=\tilde M(h, n)>0$ such that at least one of $\alpha, \beta$ has a $(\tilde\epsilon,\tilde{M})$-deep point in  $S_0$, where $\tilde{M}\to \infty$ as $n\to\infty$.
\end{clm}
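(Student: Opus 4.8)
The plan is to use the thinness of the geodesic triangle $\alpha \cup \beta \cup \gamma$ in the $\delta$-hyperbolic quasi-tree $\QT$, together with the fact that a long deep segment on $\gamma$ cannot be ``spread out'' over the other two sides without at least one of them inheriting a long segment deep in $S_0$. Concretely, let $\eta \subset \gamma$ be the connected subsegment of length at least $2M$ lying in $N_\epsilon(S_0)$, where $M = M(h,n)$ is the constant from Lemma \ref{deepPtLem}. Since $\QT$ is $\delta$-hyperbolic, the side $\gamma$ lies in the $\kappa$-neighborhood of $\alpha \cup \beta$ for a uniform $\kappa = \kappa(\delta)$ (thin triangles). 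Partition $\eta$ into its portion $\eta_\alpha$ that is $\kappa$-close to $\alpha$ and its portion $\eta_\beta$ that is $\kappa$-close to $\beta$; at least one of them, say $\eta_\alpha$, contains a connected subsegment of length at least $M - 2\kappa$ (a connected piece of length at least half the total, minus the overlap near the switching point — this is where a small quantitative argument about coverings of an interval by two sets is needed).

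Next I would push $\eta_\alpha$ across to $\alpha$: for each endpoint of the long connected piece of $\eta_\alpha$ pick a point of $\alpha$ within distance $\kappa$, and let $\zeta \subset \alpha$ be the subsegment of $\alpha$ between these two points. A standard fellow-traveling estimate in a hyperbolic space (two geodesics with $\kappa$-close endpoints are $2\kappa + \delta'$-Hausdorff close, for uniform $\delta'$) shows that $\zeta$ lies in the $(\kappa + 2\kappa + \delta')$-neighborhood of $\eta_\alpha$, hence in the $(\epsilon + 3\kappa + \delta')$-neighborhood of $S_0$, and $\zeta$ has length at least $M - 2\kappa - 2\kappa = M - 4\kappa$ up to the additive fellow-traveling error. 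Setting $\tilde\epsilon := \epsilon + 3\kappa + \delta'$ (a constant depending only on $h$ through $\epsilon$ and on the hyperbolicity constant of $\QT$, hence ultimately on $h$) and $\tilde M := M/2 - C_0$ for a uniform constant $C_0 = C_0(\delta)$, we get that $\zeta$ is a connected subsegment of $\alpha$ of length at least $2\tilde M$ contained in $N_{\tilde\epsilon}(S_0)$, which is precisely an $(\tilde\epsilon, \tilde M)$-deep point in $S_0$. Since $M(h,n) \to \infty$ as $n \to \infty$ by Lemma \ref{deepPtLem}, so does $\tilde M$, giving the claim.

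The main obstacle is bookkeeping rather than conceptual: one must be careful that "deep point" is defined via a \emph{connected} subsegment, so when splitting $\eta$ according to which of $\alpha, \beta$ it shadows, one cannot simply take the longer of two possibly-disconnected pieces — one has to argue that a connected interval of length $2M$ covered by two closed sets (the $\kappa$-shadows of $\alpha$ and of $\beta$) contains a connected subinterval of length at least $M$ lying in one of them, and then track how much length is lost transporting that subinterval first to $\gamma$-points and then to $\alpha$-points via the two layers of $\kappa$-approximation and the geodesic fellow-traveling lemma. All constants produced this way depend only on $h$ (via $\epsilon$) and on the uniform hyperbolicity constant of $\QT$ (which is fixed once $L$ is fixed), so they are legitimately of the form $\tilde\epsilon(h), \tilde M(h,n)$, and no properness of the action is used here beyond what is already packaged in Lemma \ref{RotonQT}.
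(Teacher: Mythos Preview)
Your proposal is correct and follows essentially the same approach as the paper, which merely sketches: ``The conclusion is an easy consequence of thin-triangle property together with quasiconvexity of $S\in \mathbb S$. By computation, $\tilde M=\tilde M(h, n)$ is a definite positive fraction of $M$ and thus has the same asymptotic as $M$. We leave the details to the interested reader.'' You have supplied exactly those details --- the thin-triangle shadowing of $\gamma$ by $\alpha\cup\beta$, the connected splitting near the internal point of the triangle, and the fellow-traveling transfer to $\alpha$ --- and your bookkeeping of constants matches the paper's assertion that $\tilde M$ is a definite fraction of $M$.
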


\begin{proof}[Proof of the claim] 
By Theorem \ref{quasitreeThm}, we know that $\QT$ is a hyperbolic space. The conclusion is easy consequence of thin-triangle property together with quasiconvexity of $S\in \mathbb S$. By computation, $\tilde M=\tilde M(h, n)$ is a definite positive fraction of $M$ and thus has the same asymptotic as $M$.  We leave the details to the interested reader. 
\end{proof}

By Claim \ref{deepinalpha}, assume for definiteness that the geodesic $\alpha=[o, g_1o]_{\mathcal C}$ in $\QT$ has a $(\tilde \epsilon, \tilde M)$-deep point in $S_0$.  

\textbf{Step 2.} We now turn to consider the action of $G$ on $X$. Look at the intervel set $\mathbb S_{ K'}(U, V)$ between $U:=\ax(h)$ and $V:=g_1\ax(h)$ for a constant $K' $ satisfying Lemma \ref{unicloseLem}. By Lemma \ref{admisIntvLem}, there exist a constant $K_0=K_0(K')>0$  and   a $(  K_0, \theta)$-admissible path $\tilde \alpha$  between $o\in U$ and  $g_1o\in V$ with the  saturation $\mathbb S_{ K'}(U, V)$. Notice  that this admissible path $\tilde \alpha$ naturally projects to a $K'$-standard path $\bar \alpha$ with endpoints $o$ and  $g_1 o$: it is the path connecting $o$ and $g_1o$ and passing exactly each contracting subset in $\mathbb S_{K'}(U, V)$ by a geodesic. See Defintion \ref{stdpathDef}.

%Assume that $K'$ is big enough so that $K_0$ satisfies   Proposition \ref{admis}.

  It is possible that $\mathbb S_{K'}(U, V)=\emptyset$: $\tilde \alpha$ is a geodesic in $X$ between $o$ and $g_1o$, and correspondingly, $\bar \alpha$ is just an edge of length $L$ in $\QT$. However, we shall show that it must contain $S_0$ for big $n\gg 0$.

\begin{clm}\label{deepinalpha2}
There exist $n_0\ge 0$ and $\tilde K=\tilde K(h,n)>0$ for $n\ge n_0$ such that $S_0 \in \mathbb S_{ K'}(U, V)$ and  $d^{\pi}_{S_0}(U, V)\ge \tilde K$. Moreover, $\tilde{K}\to \infty$ as $n\to\infty$.
\end{clm}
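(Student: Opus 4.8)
The plan is to promote the deep point of $\alpha=[o,g_1o]_{\mathcal C}$ in $S_0$ inside $\QT$, furnished by Claim~\ref{deepinalpha}, first to a lower bound on the diameter of the projection of $\{o,g_1o\}$ onto $S_0$ computed inside $\QT$, then to transport that bound to the ambient space $X$ where it reads $d^\pi_{S_0}(U,V)\ge\tilde K$, and finally to deduce $S_0\in\mathbb S_{K'}(U,V)$ from the coarse agreement of the modified distance $d_{S_0}$ with $d^\pi_{S_0}$ once $\tilde K>K'+2\theta$. The key point throughout is that every comparison constant below is independent of $n$, so the divergence $\tilde M=\tilde M(h,n)\to\infty$ of Claim~\ref{deepinalpha} survives to give $\tilde K\to\infty$.

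First I would extract the projection bound in $\QT$. By Claim~\ref{deepinalpha}, $\alpha$ contains a geodesic subsegment $\sigma$ of length at least $2\tilde M$ lying in the $\tilde\epsilon$-neighbourhood of $S_0$ in $\QT$. Since $\QT$ is hyperbolic (Theorem~\ref{quasitreeThm}) and each $S\in\mathbb S$ is totally geodesically embedded, hence quasiconvex and therefore contracting in $\QT$, the endpoints $\sigma_-,\sigma_+$ lie within $\tilde\epsilon$ of their closest-point projections to $S_0$, so the diameter of the projection of $\{\sigma_-,\sigma_+\}$ onto $S_0$ is at least $2\tilde M-2\tilde\epsilon$. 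As $\sigma\subset\alpha$ and $\alpha$ is a geodesic, the analogue in $\QT$ of Lemma~\ref{pp}(3) then shows that the diameter, computed in $\QT$, of the projection of $\{o,g_1o\}$ onto $S_0$ is at least $2\tilde M-C_1$ for a constant $C_1$ depending only on the constants of the quasi-tree of spaces.

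Next I would transport this bound to $X$ and conclude. By the ``moreover'' clause of Theorem~\ref{quasitreeThm}, the shortest-point projection onto $S_0$ in $\QT$ agrees with the closest-point projection $\pi_{S_0}$ in $X$ up to a uniform Hausdorff distance; and since $o\in U$, $g_1o\in V$, while the projections $\pi_{S_0}(U)$ and $\pi_{S_0}(V)$ have diameter at most $\theta$ whenever $S_0\notin\{U,V\}$ (axiom \textbf{(PC0)} of Lemma~\ref{bp}), the sets $\pi_{S_0}(o),\pi_{S_0}(U)$ and $\pi_{S_0}(g_1o),\pi_{S_0}(V)$ coarsely coincide in $X$. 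Combining with the previous step gives $d^\pi_{S_0}(U,V)\ge 2\tilde M-C$ for a uniform constant $C$; set $\tilde K:=2\tilde M-C$, so that $\tilde K\to\infty$ as $n\to\infty$. Finally, since $d_{S_0}$ and $d^\pi_{S_0}$ differ by at most $2\theta$, choosing $n_0$ so that $\tilde K>K'+2\theta$ for all $n\ge n_0$ yields $d_{S_0}(U,V)>K'$, i.e.\ $S_0\in\mathbb S_{K'}(U,V)$.

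The hard part will be justifying the standing assumption $S_0\notin\{U,V\}$; without it the interval set $\mathbb S_{K'}(U,V)$ and the quantity $d^\pi_{S_0}(U,V)$ are vacuous. I would rule out $S_0=U$ using the transitionality of Lemma~\ref{freesemigroup}(3): the geodesic $[o,g_1o]$ in $X$ has both endpoints on the admissible path carrying the branch of $\mathcal T$ through $g_1$, hence is $(\epsilon_\delta,M_\delta)$-transitional relative to $\mathbb S=\{g\ax(h):g\in G\}$ with $\epsilon_\delta,M_\delta$ independent of $n$. If $S_0$ were $U=\ax(h)$, then, since $\alpha$ starts at $o\in U$ and $U$ is isometrically embedded in the hyperbolic space $\QT$, the segment $\sigma$ would sit inside an initial portion of $\alpha$ whose length is comparable to the diameter, computed in $\QT$, of the projection of $\{o,g_1o\}$ onto $U$; by the ``moreover'' clause and bounded projection this is, up to a uniform error, the distance in $X$ from $o$ to $\pi_{\ax(h)}(g_1o)$, and transitionality of $[o,g_1o]$ near $\ax(h)$ bounds that distance by a constant independent of $n$ --- contradicting $\tilde M\to\infty$. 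The symmetric argument at the endpoint $g_1o\in V$ excludes $S_0=V$. What then remains is the routine but essential bookkeeping that each constant used above --- the hyperbolicity constant of $\QT$, the Hausdorff distance in Theorem~\ref{quasitreeThm}, $\theta$, the contracting constant of $\mathbb S$ in $\QT$, and $\epsilon_\delta,M_\delta$ --- is genuinely independent of $n$.
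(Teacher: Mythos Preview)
Your argument is correct and takes a genuinely different route from the paper. The paper argues by contradiction through the structure of standard paths: assuming $S_0\notin\mathbb S_{K'}(U,V)$, Lemma~\ref{unicloseLem} forces the geodesic $\alpha$ to visit the $D$-neighbourhood of each $S\in\mathbb S_{K'}(U,V)$; bounded intersection in $\QT$ then pushes the long $\tilde\epsilon$-deep segment near $S_0$ into one of the length-$L$ edges bridging consecutive elements of the standard path, which contradicts $\tilde M\gg L$. You proceed directly instead: the deep segment yields a large nearest-point projection of $\{o,g_1o\}$ onto $S_0$ inside the hyperbolic space $\QT$ (via quasiconvexity and the analogue of Lemma~\ref{pp}(3)), and the ``moreover'' clause of Theorem~\ref{quasitreeThm} together with $o\in U$, $g_1o\in V$ and axiom (\textbf{PC0}) transports this to $d^\pi_{S_0}(U,V)\ge\tilde K$ in $X$; membership in $\mathbb S_{K'}(U,V)$ then follows from the $2\theta$-agreement of $d_{S_0}$ with $d^\pi_{S_0}$. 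Your route avoids Lemma~\ref{unicloseLem} and the standard-path combinatorics entirely and isolates cleanly where each constant enters. It also makes explicit the edge case $S_0\in\{U,V\}$, which you correctly exclude via the transitionality of Lemma~\ref{freesemigroup}(3) in $X$; the paper's proof of the claim does not separate this case out, though the very same transitionality is what delivers the final contradiction immediately after the claim in the proof of Lemma~\ref{lastlem}, so the two arguments ultimately converge.
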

\begin{proof}[Proof of the claim] 
By Lemma \ref{unicloseLem}, the proof consists in comparing the standard path $\alpha$ and the geodesic $\bar \alpha$ of the same endpoints in the quasi-tree of spaces $\QT$. 
Namely, there is a uniform constant $D>0$ such that every contracting set   in $\mathbb S_{K'}(U, V)$ has the   $D$-neighborhood intersecting  $\alpha$.

Since  $\mathbb S$ has bounded intersection in $\QT$ by Lemma \ref{RotonQT},  there exists $L_0=\mathcal R(\max(D, \epsilon))>0$ so that 
\begin{equation}\label{bddintEQ}
\diam{N_D(S)\cap N_\epsilon(S')}\le L_0
\end{equation}
for any two $S\ne S'\in \mathbb S$, where the diameter is measured using the metric in $\QT$.

Recall that $d_S^{\pi}(U, V)\ge K'$ for each $S\in \mathbb S_{K'}(U, V)$. Since each $S$ is convex in a hyperbolic space $\QT$, it is easy to see that  by taking  $K'\gg 2D+L_0$, we obtain 
$$
\diam{N_D(S)\cap \alpha}\ge L_0.
$$

By the Claim \ref{deepinalpha}, $\tilde M=\tilde M(h, n)\to \infty$ as $n\to \infty$. By definition of $(\epsilon, M)$-deep points, we can take $n$ big enough so that  
\begin{equation}\label{bddintEQ2}
\diam{N_\epsilon(S_0)\cap \alpha}\ge \tilde M/ 2>L+L_0+
2\tilde \epsilon.
\end{equation}

Assume by contradiction that  $S_0\notin \mathbb S_{K'}(U, V)$. By (\ref{bddintEQ}),  the $\epsilon$-neighborhood of $S_0$ intersects boundedly by a constant $L_0$ with the $D$-neighborhood of each $S\in \mathbb S_{K'}(U, V)$. Since $\diam{N_\epsilon(S_0)\cap \alpha}>L_0$ by (\ref{bddintEQ2}), $N_\epsilon(S_0)\cap \alpha$ has to lie between the two intersections with $\alpha$ of adjacent contracting sets  $S_1, S_2\in \mathbb S_{K'}(U, V)$.  By definition of standard path, there are only   edges of length $L$ between $S_1$ and $S_2$.  This thus gives a contradiction with (\ref{bddintEQ2}). Hence, it is proved that  $S_0\notin \mathbb S_{K'}(U, V)$.
 \end{proof}

In conclusion, we have proven by the Claim \ref{deepinalpha2}, that the interval set $\mathbb S_{K'}(U, V)$ contains $S_0$ provided by the Claim \ref{deepinalpha} so that  $d^{\pi}_{S_0}(U, V)\ge \tilde K$. By Lemma \ref{pp} (4), we have 
$$
\diam{N_C(S_0) \cap [o, g_1o]_X} \ge \tilde K/2, 
$$
where $[o, g_1o]_X$ is a geodesic in $X$.  On the other hand, for $g_1\in \Gamma$, there exists a $(n_\delta, \theta)$-admissible path $\gamma$ between  $o$ and $g_1o$. Using now the bounded intersection of $\mathbb S$ in $X$,  since $\tilde K\to \infty$ as $n\to \infty$, we get a contraction for $n\gg n_\delta$. The proof of the lemma is thus  complete.
\end{proof}

\section{Products of group actions}\label{Section6}
In this section, we shall give a proof of  Theorem \ref{productThm} which generalizes Theorem \ref{mainthm} to a product of geometric actions with contracting elements.

Consider a product of non-elementary groups $G=G_1\times G_2\cdots\times G_n$ and a product of   geodesic metric spaces $X=(X_1,d_1)\times (X_2,d_2)\cdots\times (X_n,d_n)$. Let $G\curvearrowright  X$ be the diagonal action  so that $G_i \curvearrowright (X_i,d_i)$ is a proper and co-compact action for  each $i=1,\cdots, n$.  We equip $X$ with $L^p$-metric for $1\leq p< \infty$ as follows
$$
d^p(\bar x, \bar y)=\left(\sum_{i=1}^{i=n} d(x_i, y_i)^p\right)^{1/p},
$$ where $\bar x=(x_i), \bar y=(y_i)$. For $p=\infty$, define $$d^\infty(\bar x, \bar y)=\max_{i} \{d(x_i, y_i)\}.$$
Then $G$ admits a proper and co-compact action by isometries on $X$ with the $L^p$-metric for $1\le p \le \infty$. Moreover, choosing  basepoints $o_i\in X_i$ in each factor, we can define the growth rate of a subset $A \cdot \bar o$ in $G$ where $\bar o=(o_i) \in X$.

\subsection{Proof of Theorem \ref{productThm}}
Recall that a quotient $\bar G$ of a product group $G=G_1\times G_2\cdots\times G_n$ is called \textit{proper} if the kernel of $G\to \bar G$ projects to to infinite normal subgroup in each factor $G_i$.  

The proof of the theorem follows the similar line as that of Theorem \ref{mainthm}, with the help of the additional ingredient from \cite [Proposition 5.1]{5}. It relates the growth rates of each $G_i\curvearrowright X_i$ with that of $G\curvearrowright X$ in a nice way.  

Denote  $N_i(r)=\sharp B(o_i, r)\cap A_io_i$ for a subset $A_i\subset G_i$. Denote by $\delta_i=\delta_{A_i}$ the growth rate of $A_i$ with respect to $G_i\curvearrowright X_i$.

\begin{prop}\label{pqnormLem}
Assume there exists a subset $A_i\subset G_i$ for each $i$ such that up to bounded error, $\log N_i(r)$ is subadditive in $r$.  For any $1\leq p\leq \infty,$ the growth rate $\delta_A$ of $A=\Pi^n_iA_i$ with respect to the $L^p$-metric on $X$ is the $L^q$-norm of $(\delta_1,\cdots,\delta_n)$, where $1/p+1/q=1$.
\end{prop}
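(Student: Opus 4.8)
The plan is to reduce the statement to the single-factor estimates for the counting functions $N_i(r)$ together with the duality between $\ell^p$ and $\ell^q$; the near-subadditivity hypothesis is exactly what is needed to run the lower-bound half of the argument. The first thing I would do is record two-sided control on each $N_i$. Since $G_i\curvearrowright X_i$ is geometric, $G_i$ is finitely generated, hence $\delta_i\le\delta_{G_i}<\infty$. As $r\mapsto N_i(r)$ is non-decreasing and $\log N_i(r)$ is subadditive up to a fixed additive constant $c$, Fekete's lemma (applied for $r$ beyond the first radius at which $N_i\ge 1$) promotes the $\limsup$ defining $\delta_i$ to an honest limit and identifies it with an infimum; this gives the pointwise lower bound $N_i(r)\ge e^{\delta_i r-c}$ for all large $r$, while the mere existence of the limit gives, for each $\varepsilon>0$, an upper bound $N_i(r)\le e^{(\delta_i+\varepsilon)r}$ for $r$ large. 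I would also rewrite the quantity to be estimated: since $A\bar o=(A_1o_1)\times\cdots\times(A_no_n)$ and $d^p(\bar o,\bar x)=\|(d_i(o_i,x_i))_i\|_p$ for $\bar x=(x_i)$, one has $\sharp\big(B(\bar o,r)\cap A\bar o\big)=\sharp\{(x_i)_i:x_i\in A_io_i,\ \|(d_i(o_i,x_i))_i\|_p\le r\}$.

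For the upper bound $\delta_A\le\|(\delta_1,\dots,\delta_n)\|_q$, I would decompose each factor into integer shells: for $\mathbf k=(k_i)\in\mathbb Z_{\ge0}^n$ with $\|\mathbf k\|_p\le r$, the number of $x_i\in A_io_i$ with $d_i(o_i,x_i)\in[k_i,k_i+1)$ is at most $N_i(k_i+1)\lesssim e^{(\delta_i+\varepsilon)k_i}$, up to bounded multiplicative constants that absorb the finitely many small values of $k_i$. Multiplying over $i$, summing over the at most $(r+1)^n$ such shells, and applying Hölder on each term in the form $\sum_i(\delta_i+\varepsilon)k_i\le\|(\delta_i+\varepsilon)_i\|_q\,\|\mathbf k\|_p\le\|(\delta_i+\varepsilon)_i\|_q\,r$, one gets $\sharp(B(\bar o,r)\cap A\bar o)\lesssim (r+1)^n e^{\|(\delta_i+\varepsilon)_i\|_q r}$. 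Taking logarithms, dividing by $r$, letting $r\to\infty$ and then $\varepsilon\to0$ yields the desired inequality.

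For the lower bound $\delta_A\ge\|(\delta_1,\dots,\delta_n)\|_q$, I would instead use a single, well-chosen shell. Let $(s_1^*,\dots,s_n^*)$ be a vector in the non-negative part of the $\ell^p$-unit ball at which the linear functional $(s_i)\mapsto\sum_i\delta_i s_i$ attains its maximum, which by $\ell^p$--$\ell^q$ duality (and $\delta_i\ge0$) equals $\|(\delta_1,\dots,\delta_n)\|_q$; concretely $s_i^*\propto\delta_i^{q-1}$ for $1<p<\infty$, with the obvious degenerations when $p=1$ (mass on an index maximizing $\delta_i$) or $p=\infty$ (all coordinates equal). Put $t_i=rs_i^*$, adjusting any $t_i$ that is too small up to a fixed constant, which is harmless. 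Then every point of $\prod_i\big(A_io_i\cap B(o_i,t_i)\big)$ lies in $B(\bar o,r)$, since $\|(d_i(o_i,x_i))_i\|_p\le\|(t_i)_i\|_p\le r$, so $\sharp(B(\bar o,r)\cap A\bar o)\ge\prod_i N_i(t_i)\ge\prod_i e^{\delta_i t_i-c}=e^{\|(\delta_1,\dots,\delta_n)\|_q\,r-O(1)}$, and dividing the logarithm by $r$ finishes the proof. The endpoint cases $p\in\{1,\infty\}$ need no separate treatment once $\|\cdot\|_q$ is read off as $\max$ or $\sum$.

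The step I expect to be the only real point of care is the first one: the definition of $\delta_i$ supplies only a $\limsup$, whereas the lower-bound computation genuinely needs the pointwise estimate $N_i(r)\gtrsim e^{\delta_i r}$, and this is exactly where the subadditivity hypothesis enters, through Fekete's lemma. Everything else is routine bookkeeping with subexponential factors, integer rounding, and the explicit Hölder extremizer.
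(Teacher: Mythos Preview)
Your argument is correct. The paper itself does not prove this proposition; it is quoted from \cite[Proposition 5.1]{5} (Cashen--Tao) and used as a black box in the proof of Theorem \ref{productThm}. Your proof is essentially the standard one and matches the approach in that reference: the upper bound comes from an integer shell decomposition together with H\"older's inequality $\sum_i \delta_i k_i\le\|(\delta_i)\|_q\,\|(k_i)\|_p$, while the lower bound comes from restricting to the product of balls of radii $t_i=rs_i^*$ along the H\"older extremizer and invoking the pointwise bound $N_i(r)\ge e^{\delta_i r-c}$, which is precisely where the near-subadditivity hypothesis is consumed via Fekete's lemma. The only cosmetic point is the adjustment when some $s_i^*=0$: one should shrink $r$ by an additive constant (or replace $s^*$ by $(1-\eta)s^*$ and let $\eta\to0$ at the end) so that the perturbed vector $(t_i)$ still satisfies $\|(t_i)\|_p\le r$; this is the ``harmless'' step you flag, and it indeed is.
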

\begin{rmk}
This subbaddtivity $\log N_i(r)$  is exactly the purely exponential growth in \cite{4}. One particular example with subadditive $\log N_i(r)$ is given by the growth function of a geometric action with a contracting element. 
\end{rmk}

To use Proposition  \ref{pqnormLem}, we let $A_i \subset G_i$ be the free semigroup given by Lemma \ref{freesemigroup} whose growth rate $\delta_i$ tends to $\delta_{G_i}$. Since $N_i(r) \asymp \exp(\delta_{G_i} r)$, we have $\log N_i(r)$ is subadditive in $r$, up to bounded error.

Thus, by Proposition \ref{pqnormLem}, the set $A=\Pi^n_iA_i$ with respect to the $L^p$-metric 
has the growth rate the $L^q$-norm of $(\delta_1,\cdots,\delta_n)$. Hence, as $\delta_i\to \delta_{G_i}$, we have $\delta_A$ tends to the $L^q$-norm of $(\delta_{G_1},\cdots,\delta_{G_n})$. 

Since $G_i\curvearrowright X_i$ is geometric with a contracting element, the growth function is purely exponential so the set $A_i=G_i$ satisfies Proposition \ref{pqnormLem} so the growth rate $\delta_G$ of $G$ using $L_p$-metric is the  $L^q$-norm of $(\delta_{G_1},\cdots,\delta_{G_n})$. We refer the reader to \cite{4} for the proof of purely exponential growth.

To complete the proof of theorem \ref{productThm}, we can choose for each $A_i$, a sufficiently high power $h_i^{m_i}$ of a contracting element $h_i$ so that $A_i o$ injects into the quotient space $X_i/\llangle h_i^{m_i}\rrangle$ acted upon by the quotient group action   $G_i/\llangle h_i^{m_i}\rrangle$. This is the same as   the proof of Lemma  \ref{lastlem}.

The injectiveness implies that  the growth rate of the group $\prod_{i=1}^{n} G_i/\llangle h_i^{m_i}\rrangle$ on $\prod_{i=1}^{n} X_i/\llangle h_i^{m_i}\rrangle$ is greater than the growth rate of $A$ with respect to the  action $G\curvearrowright X$ which is the $L^q$-norm of $(\delta_1,\cdots,\delta_n)$.

Denote $$\bar G_m=G/{\prod^n_{i=1}\llangle h_i^{m_i}\rrangle}={\prod^n_{i=1}G_i/\llangle h_i^{m_i}\rrangle}$$
 we obtain
$$
\delta_{\bar G_m}\rightarrow\delta_G, \:as \: m_i\rightarrow \infty.
$$
The proof of Theorem \ref{productThm} is complete.

\subsection{Applications to cubical groups: proof of Corollary \ref{CubeNoGap}}
 A group $G$ is called \textit{cubical} if it admits a geometric action on a CAT(0) cube complex. We now explain a proof of Corollary \ref{CubeNoGap} for cubical groups which drops the assumption of rank-1 elements in Corollary \ref{CAT0NoGap}.

By a theorem of  Caprace and Sageev  \cite[Corollary 6.4]{12}, there exists a finite index subgroup $\dot G$ of $G$ acting on a convex subcomplex in $X$, which splits as a product of action on a product of  cubical subcomplexes such that each factor contains a rank-1 element. Without loss of generality, we can assume that $\dot G$ is a normal subgroup.

By Theorem \ref{productThm}, $\dot G$ posseses a sequence of proper quotients $\dot G/N_m$ with growth rates tending to $\delta_{\dot G}$, where $N_m$ is an infinite normal subgroup of $\dot G$. We denote by $\hat N_m$ the normal closure of $N_m$ in $G$. Then $N_m=\dot G\cap \hat N_m$. It thus follows that the homomorphism $$\dot G/N_m\to G/\hat N_m$$ defined by $\dot gN_m\mapsto \dot g\hat N_m$   is injective.

We shall show that the growth rate of $G/\hat N_m$ tends to $\delta_G$.  Since $\delta_{\dot G}=\delta_{G}$ holds finite index subgroups, it suffices to show that $\delta_{G/\hat  N_m}$  is greater than $\delta_{\dot G/N_m}$. 

To this end, we follow the proof of the convergence $\delta_{\dot G/N_m} \to \delta_{\dot G}$ by embedding a large free semigroup $\Gamma\subset \dot G\subset G$ into $\dot G/N_m$ with $\delta_\Gamma \to \delta_{\dot G}$ (cf. Lemma \ref{lastlem}). From the injectiveness of the above homomorphism, we obtain that $\Gamma$ injects to $G/\hat N_m$ as well and thus $\delta_{G/\hat N_m}\ge \delta_{\Gamma}$. This implies that  $\delta_{G/\hat N_m} \to \delta_{\dot G}$ and the corollary  \ref{CubeNoGap} follows.


\begin{thebibliography}{99}
  \bibitem{16} Athreya, J, Bufetov, A, Eskin, A, Mirzakhani, M.  \emph{Lattice point asymptotics and volume growth on teichm\"{u}ller space}. Duke Math. J. 161 (2012), no. 6, 1055--1111.

  \bibitem{11} Arzhantseva G N, Cashen C H, Tao J. \emph{Growth tight actions}. Pacific J. Math. 278 (2015), no. 1, 1--49.

  \bibitem{7} Arzhantseva G N, Lysenok I G. \emph{Growth tightness for word hyperbolic groups}. Math. Z. 241 (2002), no. 3, 597--611.

  \bibitem{15} Bestvina M, Fujiwara K. \emph{A characterization of higher rank symmetric spaces via bounded cohomology}. Geom. Funct. Anal. 19 (2009), no. 1, 11--40.

  \bibitem{1} Bestvina M, Bromberg K, Fujiwara K. \emph{Constructing group actions on quasi-trees and applications to mapping class groups}. Publ. Math. Inst. Hautes \'Etudes Sci. 122 (2015), 1--64.

  \bibitem{14} Behrstock J, Charney R. \emph{Divergence and quasimorphisms of right-angled Artin groups}. Math. Ann. 352 (2012), no. 2, 339--356.

  \bibitem{12} Caprace P E, Sageev M. \emph{Rank rigidity for CAT(0) cube complexes. Geom. Funct. Anal.} 21 (2011), no. 4, 851--891.

  \bibitem{5} Cashen C H, Tao J. \emph{Growth tight actions of product groups}. Groups Geom. Dyn. 10 (2016), no. 2, 753--770.

\bibitem{Coulon}
Coulon R. \emph{Growth of perdiodic quotient of hyperbolic groups},
  Algebraic \& Geometric Topology 13(6).


  \bibitem{2} Dahmani F, Guirardel V, Osin D. \emph{Hyperbolically embedded subgroups and rotating families in groups acting on hyperbolic spaces}, Mem. Amer. Math. Soc. 245 (2017), no. 1156, v+152 pp. ISBN: 978-1-4704-2194-6; 978-1-4704-3601-8

  \bibitem{10} Dal'Bo F, Peign$\acute{e}$ M, Picaud J C, Sambustti A. \emph{On the growth of quotients of Kleinian groups}. Ergodic Theory Dynam. Systems 31 (2011), no. 3, 835--851.

  \bibitem{6} Grigorchuk R, Harpe de la. \emph{On problems related to growth, entropy, and spectrum in group theory}. J. Dynam. Control Systems 3 (1997), no. 1, 51--89.

  \bibitem{13} Minsky Y. \emph{Quasi-projections in Teichm\"{u}ller space}. J. Reine Angew. Math. 473 (1996), 121--136.

  \bibitem{8} Sambusetti A. \emph{Growth tightness of free and amalgamated products}. Ann. Sci. ecole Norm. Sup. (4) 35 (2002), no. 4, 477--488.

  \bibitem{9} Sambusetti A. \emph{Asymptotic properties of coverings in negative curvature} Geom. Topol. 12 (2008), no. 1, 617--637.

\bibitem{Sisto} Sisto A. \emph{Contracting elements and random walks}, Journal \"{u}r die reine und angewandte Mathematik , 742 (2018), 79-114.

\bibitem{Y1}  Yang W.Y.  \emph{Growth tightness for groups with contracting elements}, Math. Proc. Cambridge Philos. Soc., 157(2014), 297-319.

  \bibitem{3} Yang W.Y. \emph{Patterson-Sullivan measures and growth of relatively hyperbolic groups}, Preprint,arXiv:1308.6326,2013.

  \bibitem{4}  Yang W.Y. \emph{Statistically convex-cocompact actions of groups with contracting elements}, International Mathematics Research Notices, No. 23, 2019, 7259-7323.




\end{thebibliography}
\end{document}